\newtheorem{mydef}{Definition}[section]
\newtheorem{thm}[mydef]{Theorem}
\newtheorem{prop}[mydef]{Proposition}
\newtheorem{lem}[mydef]{Lemma}
\newtheorem{conj}[mydef]{Conjecture}
\newtheorem{cor}[mydef]{Corollary}
\theoremstyle{remark}
\newtheorem{rem}[mydef]{Remark}
\newtheorem{ex}[mydef]{Example}
\newtheorem*{notations}{Notations}
\newtheorem*{ac}{Acknowledgments}
\title{$\ell$-Galois special subvarieties and the Mumford-Tate conjecture}
\author{Tobias Kreutz}
\date{\vspace{-5ex}}
\begin{document}

\maketitle

\begin{abstract}
We introduce $\ell$-Galois special subvarieties as an $\ell$-adic analog of the Hodge-theoretic notion of a special subvariety.
The Mumford-Tate conjecture predicts that both notions are equivalent.
We study some properties of these subvarieties and prove this equivalence for subvarieties satisfying a simple monodromy condition.
As applications, we show that the $\ell$-Galois exceptional locus is a countable union of algebraic subvarieties and, if the derived group of the generic Mumford-Tate group of a family is simple, its part of positive period dimension coincides with the Hodge locus of positive period dimension.
We use this to prove that for $n$ and $d$ sufficiently large, the absolute Mumford-Tate conjecture in degree $n$ holds on a dense open subset of the moduli space of smooth projective hypersurfaces of degree $d$ in $\mathbb{P}^{n+1}$, with the exception of hypersurfaces defined over number fields.
Finally, we show that the Mumford-Tate conjecture for abelian varieties is equivalent to a conjecture about the local structure of $\ell$-Galois special subvarieties in $\mathcal{A}_g$.
\end{abstract}

\renewcommand{\baselinestretch}{0.90}\normalsize
\tableofcontents
\renewcommand{\baselinestretch}{1.00}\normalsize

\section*{Introduction}
\addcontentsline{toc}{section}{Introduction}

The Mumford-Tate conjecture concerns the comparison of two additional structures on the cohomology of algebraic varieties.
Let $K \subset \mathbb{C}$ be a field finitely generated over $\mathbb{Q}$ and $X \to \mathrm{Spec \,\,} K$ a smooth projective algebraic variety.
\begin{enumerate}[(i)]
\item Betti cohomology $H^k_B(X, \mathbb{Q}):= H_{sing}^k(X(\mathbb{C}), \mathbb{Q})$ carries a $\mathbb{Q}$-Hodge structure.
\item
The $\ell$-adic étale cohomology $H^k_{\acute{e}t}(X_{\bar{K}}, \mathbb{Q}_{\ell})$ carries an action of the absolute Galois group of $K$, giving rise to a continuous Galois representation $$\rho_X: \mathrm{Gal}(\bar{K}/K) \to \mathrm{GL}(H_{\acute{e}t}^k(X_{\bar{K}}, \mathbb{Q}_{\ell})).$$
\end{enumerate}

The Mumford-Tate conjecture states that these additional structures convey essentially the same information. It is best formulated in terms of two groups naturally associated with the two structures.
We let $G_{X}$ denote the \emph{Mumford-Tate group} of $H^k_B(X, \mathbb{Q})$, a reductive group over $\mathbb{Q}$ which is defined as the Tannakian group of the subcategory tensorially generated by $H^k_B(X, \mathbb{Q})$ (and its dual) inside the Tannakian category of $\mathbb{Q}$-Hodge structures.
For the $\ell$-adic realization, we denote by $G_{\ell, X}$ the \emph{$\ell$-adic algebraic Galois group} of $X$, defined as the connected component of the identity of the Zariski closure of the image of $\rho_X$ in $\mathrm{GL}(H_{\acute{e}t}^k(X_{\bar{K}}, \mathbb{Q}_{\ell}))$.
\begin{conj}[Mumford-Tate conjecture]\label{mtconjecture}
For every smooth projective variety $X$ over $K$, we have the equality $G_{\ell,X} = G_{X} \otimes_{\mathbb{Q}} \mathbb{Q}_{\ell}$.
\end{conj}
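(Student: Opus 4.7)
The final statement is the Mumford-Tate conjecture itself, which remains open in general, so any proof proposal is necessarily a proposal for a partial result or a conditional/reduction strategy. I will describe the strategy I would pursue given the tools foregrounded in the abstract, namely the $\ell$-Galois versus Hodge theoretic picture of special subvarieties.

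First, I would exploit what is already known and reduce to a containment question. One direction, $G_{\ell,X}^{\circ} \subseteq (G_X)_{\mathbb{Q}_\ell}$, is known in substantial generality via Deligne's theory of absolute Hodge cycles (for abelian varieties and a wider class of motives), since every Hodge tensor cutting out the Mumford-Tate group is absolute, hence Galois-fixed up to finite index. So the real content is the reverse inclusion: one must show that the $\ell$-adic Galois image is \emph{large enough}, producing enough Galois-invariant tensors to match those fixed by the Mumford-Tate group. This is the hard direction and the main obstacle, equivalent in spirit to the Tate conjecture for certain Hodge classes.

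My plan, following the logic of the paper, would be to deform the problem. Given $X$, embed it in a suitable family $\mathcal{X} \to S$ (e.g.\ a moduli space or a base to which $X$ belongs as a very general fiber) and compare two loci in $S$: the Hodge locus, along which extra Hodge tensors appear in the variation of Hodge structure, and the $\ell$-Galois exceptional locus, defined analogously in terms of drops in $G_{\ell,X_s}$. Both loci are expected to be countable unions of algebraic subvarieties, the former by Cattani--Deligne--Kaplan. If one can establish the analogous algebraicity for the $\ell$-Galois exceptional locus (which the abstract claims) and show that the two loci agree, then at each fiber the Mumford-Tate and $\ell$-adic algebraic Galois group cut out the same special subvariety through $s$, which gives the conjecture for that fiber up to controlling the generic monodromy. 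Concretely, I would try to lift any $\ell$-Galois special subvariety to a Hodge-theoretic one by showing that its Zariski closure is defined by Hodge tensors, using the simple monodromy hypothesis to guarantee that the ambient monodromy group is large enough to rigidify the comparison.

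The hardest step will be precisely this bridging: transferring $\ell$-adic algebraicity statements to the Hodge-theoretic side without a priori knowing the Mumford-Tate conjecture, i.e.\ showing that a Zariski-closed locus defined by a $\mathbb{Q}_\ell$-linear algebra condition on Galois representations is in fact cut out by a $\mathbb{Q}$-linear Hodge-theoretic condition. I would attack this via a monodromy argument: on a family where the geometric monodromy image is sufficiently large (a simple derived group condition as in the abstract), the only subvarieties that can be stable under monodromy are those coming from global tensors, so one forces the local $\ell$-adic condition to descend to a Hodge condition. The inevitable residue is that this only settles the conjecture generically; points defined over number fields escape the monodromy leverage and must be treated separately, exactly as the abstract signals for the hypersurface application. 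Beyond these partial results, a full proof of the conjecture as stated here is not something I would attempt, and I would not expect the excerpt to contain one.
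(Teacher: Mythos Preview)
The statement is the Mumford-Tate conjecture itself, and the paper does not prove it; it is stated as an open problem that frames the rest of the paper. You correctly recognize this and instead sketch a reduction strategy that broadly mirrors the paper's philosophy: deform into families, compare the Hodge locus with an $\ell$-Galois exceptional locus, and use monodromy under a simplicity hypothesis on $G_S^{der}$ to force agreement of the positive-dimensional parts, leaving number-field points as the residual case.

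Two corrections are worth making. First, your claim that the inclusion $G_{\ell,X} \subseteq G_X \otimes \mathbb{Q}_\ell$ is ``known in substantial generality'' overstates matters: what is known in general is only $G_{\ell,X} \subseteq G_X^{AH} \otimes \mathbb{Q}_\ell$ and $G_X \subseteq G_X^{AH}$, where $G_X^{AH}$ is the absolute Mumford-Tate group; neither of $G_{\ell,X}$ and $G_X \otimes \mathbb{Q}_\ell$ is known to contain the other outside the abelian-type case where Deligne's theorem gives $G_X = G_X^{AH}$. Second, and relatedly, the paper's bridging mechanism is not a direct comparison of special and $\ell$-Galois special subvarieties but passes through the intermediate notion of \emph{absolutely special} subvarieties, defined via $G_Z^{AH}$. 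The key structural step is that under the weakly non-factor hypothesis, weakly special already implies absolutely special, and absolutely special implies both special and $\ell$-Galois special; this three-way comparison through $G^{AH}$ is what makes the monodromy argument work symmetrically for both the Hodge and the $\ell$-adic side, and it is the organizing idea your sketch omits.
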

There exist a number of partial results on the Mumford-Tate conjecture in the case where $X$ is an abelian variety or more generally, a variety whose motive is of abelian type. We refer to (\cite{Moonen}, 2.4 and 4.4) for a discussion of known cases of the conjecture.
In general however it is still open even for abelian varieties, let alone outside of the case of abelian motives.

Due to the very different nature of the groups $G_X$ and $G_{\ell,X}$, one often defines a third group $G_X^{AH}$ with the property that $G_X \subset G_X^{AH}$ and $G_{\ell,X} \subset G_X^{AH} \otimes_{\mathbb{Q}} \mathbb{Q}_{\ell}$. We call the group $G_X^{AH}$ the \emph{absolute Mumford-Tate group} of $X$, it is the motivic Galois group of the motive $\mathfrak{h}^k(X)$ in terms of motives for absolute Hodge cycles (cf. \cite{DM}, §6).

The Mumford-Tate conjecture then follows from the conjunction of the following two conjectures:

\begin{conj}[Hodge cycles are absolute Hodge]\label{HAH}
For every smooth projective variety $X$ over $\mathbb{C}$, we have the equality $G_{X} = G_{X}^{AH}$.
\end{conj}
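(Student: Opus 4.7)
The plan is to try to extend Deligne's strategy from the abelian case. The cornerstone is Principle B: for a smooth proper family $f \colon \mathcal{X} \to S$, a flat section of the local system $R^{2k} f_* \mathbb{Q}(k)$ that is a Hodge class at every point of $S$ and absolute Hodge at a single point of $S$ is absolute Hodge everywhere. This propagates the absolute Hodge property through families, and the first step is to record (or recall) Principle B in the form needed.

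Granted Principle B, the problem reduces to producing enough initial absolute Hodge classes. For abelian varieties, Deligne's approach is to deform $X$ within a family of abelian varieties whose Mumford-Tate group remains contained in the Mumford-Tate group of $X$, until one reaches an abelian variety of CM type; for such varieties the Hodge classes decompose along the characters of a torus, can be written explicitly in terms of the CM type, and are shown directly to be absolute Hodge. The Kuga-Satake construction provides a similar bridge for K3 surfaces, handling Hodge classes on the K3 in terms of Hodge classes on an auxiliary abelian variety.

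For a general smooth projective variety, the central obstacle is that no analogue of CM points is available: outside of Shimura varieties of Hodge type, we have no natural parameter space containing $X$ together with a special point on which Hodge classes can be computed by representation-theoretic means, and there is no general analogue of Kuga-Satake relating $X$ to an abelian variety. A more speculative route, in the spirit of the present paper, would be to replace the search for explicit absolute Hodge classes by a comparison between the Hodge locus and its $\ell$-adic analogue, leveraging the fact that cycles detected by Galois-invariant $\ell$-adic classes in a family are, by Principle B applied on the $\ell$-adic side, automatically absolute Hodge once one such class is known. Closing the gap between the $\ell$-adic and de Rham realizations, however, is essentially the heart of the difficulty and, to my knowledge, no current technique can do this in general.
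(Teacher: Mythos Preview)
The statement you are attempting to prove is Conjecture~\ref{HAH} in the paper, and it is genuinely a conjecture: the paper does not prove it and does not claim to. There is therefore no ``paper's own proof'' to compare your proposal against. Your write-up is not a proof but a (correct) survey of the known partial results --- Deligne's theorem for abelian varieties via Principle~B and CM degeneration, and the Kuga--Satake bridge for K3 surfaces --- together with an honest acknowledgment that no current method handles the general case. That assessment is accurate, and your final paragraph is right that the gap is essentially the whole difficulty.

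One small correction of emphasis: your speculative route via comparing the Hodge locus with an $\ell$-adic analogue does not quite work as stated. Principle~B already shows that a flat Hodge section which is absolute Hodge at one point is absolute Hodge everywhere; the issue is producing that one point, and knowing a class is Galois-invariant on the $\ell$-adic side does not by itself certify it as absolute Hodge (that would require the Tate conjecture or something like it). So the $\ell$-adic comparison you sketch does not sidestep the need for an initial supply of absolute Hodge classes; it merely relocates the difficulty. In any case, the bottom line stands: Conjecture~\ref{HAH} is open, and your proposal should be read as an explanation of why, not as a proof.
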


\begin{conj}[absolute Mumford-Tate conjecture]\label{absMT}
For every smooth projective variety $X$ over $K$, we have the equality $G_{\ell,X} = G^{AH}_{X} \otimes_{\mathbb{Q}} \mathbb{Q}_{\ell}$.
\end{conj}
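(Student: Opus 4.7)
The inclusion $G_{\ell,X} \subseteq G_X^{AH} \otimes_{\mathbb{Q}} \mathbb{Q}_{\ell}$ is essentially Deligne's theorem that absolute Hodge cycles are Galois-invariant: the image of $\rho_X$ stabilizes every absolute Hodge tensor inside every $\mathbb{Q}_\ell$-tensor construction and therefore lies in $G_X^{AH}(\mathbb{Q}_{\ell})$, as does its Zariski closure. All the content of the conjecture is the reverse inclusion, equivalently the statement that every Tate tensor in some mixed tensor power of $H^k_{\acute{e}t}(X_{\bar K}, \mathbb{Q}_{\ell})$ is the $\ell$-adic realization of an absolute Hodge cycle on a power of $X$.

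The plan is to exploit the paper's framework of $\ell$-Galois special subvarieties in families. First I would spread $X$ out as the fiber over a point $s_0$ of a smooth irreducible family $\pi: \mathcal{X} \to S$ of finite type over $K$, chosen, after passing to an \'etale cover, so that the global monodromy of the local system $R^k\pi_\ast \mathbb{Q}_{\ell}$ realizes the derived group of the generic $\ell$-adic algebraic Galois group; in particular, under the simplicity hypothesis on this derived group mentioned in the abstract, the paper's monodromy condition will be met on the entire family. Next I would compare the smallest $\ell$-Galois special subvariety $S_\ell \ni s_0$ with the smallest Hodge-theoretic special subvariety $S_H \ni s_0$ inside $S$. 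By definition these encode $G_{\ell,X}$ and $G_X$ as stabilizers of tensor data at $s_0$, so the equivalence theorem of the paper applied to $S_\ell$ would force $S_\ell = S_H$ and thus $G_{\ell,X} = G_X \otimes_{\mathbb{Q}} \mathbb{Q}_{\ell}$; combined with Conjecture \ref{HAH}, this gives the desired equality $G_{\ell,X} = G_X^{AH} \otimes_{\mathbb{Q}} \mathbb{Q}_{\ell}$.

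The principal obstacle, and the reason the strategy does not close the argument in general, is that for a \emph{given} $X$ there is no guarantee that a family $\pi$ with the required monodromy can be produced: if $X$ is defined over a number field and the parameter point $s_0$ happens to lie in the $\ell$-Galois exceptional locus as an isolated component, no family trick reduces the conjecture to a positive-dimensional comparison, and this matches exactly the caveat in the paper's hypersurface corollary, which excludes varieties defined over number fields. A secondary, more structural obstacle is circularity: verifying the paper's monodromy hypothesis for $S_\ell$ itself already requires Hodge-theoretic control of the motive of $X$ of the type we are trying to establish. For these reasons the conjecture lies beyond what this direct approach can reach for isolated $X$, and the realistic way forward, at least for motives of abelian type, is the reformulation in terms of the local geometry of $\ell$-Galois special subvarieties in $\mathcal{A}_g$ announced at the end of the abstract, where the family context is built in from the start.
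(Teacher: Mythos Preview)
The statement you are addressing is a \emph{conjecture}; the paper does not prove it and offers no proof to compare against. Your write-up is therefore not a proof but a heuristic discussion, and to your credit you acknowledge the strategy does not close. However, the obstacle you identify is not the real one, and the core inferential step in your sketch is incorrect.

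You argue that once the paper's equivalence theorem yields $S_\ell = S_H$, one obtains $G_{\ell,X} = G_X \otimes_{\mathbb{Q}} \mathbb{Q}_{\ell}$. This does not follow. The equality $S_\ell = S_H$ only says that the maximal subvariety through $s_0$ on which $G_{\ell}$ is constant coincides with the maximal subvariety on which $G$ is constant; it says nothing about whether the constant values $G_{\ell,X}$ and $G_X \otimes \mathbb{Q}_\ell$ agree. The paper is explicit about this immediately after stating the main theorem: the proof ``uses monodromy methods, and does not give information on the relation of the groups $G_Z$, $G_Z^{AH}$ and $G_{\ell,Z}$ to each other. In particular, the strategy of the proof cannot be extended to prove Conjectures~\ref{conjdel} or~\ref{conjtate}.'' So the failure of your approach is not merely the inability to manufacture a good family for isolated $X$; even in the favourable positive-dimensional, weakly non-factor situation, the equivalence of special and $\ell$-Galois special subvarieties does not recover the equality of groups at a point. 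The one case where the paper \emph{does} extract a group equality from subvariety equality (Corollary~\ref{mGs}) relies on the extra input of CM points and the known Mumford--Tate conjecture for CM abelian varieties, which is unavailable in your general setting.

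A second issue: to pass from $G_X$ to $G_X^{AH}$ you invoke Conjecture~\ref{HAH}, which is itself open outside the abelian case. So even granting everything else, your argument would at best reduce Conjecture~\ref{absMT} to Conjecture~\ref{HAH}, not prove it.
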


\subsection*{$\ell$-Galois special subvarieties}
Instead of working with a single variety $X$, it turns out to be beneficial to work with a family of varieties.
Let $f:\mathcal{X} \to S$ be a smooth projective morphism of smooth irreducible quasi-projective algebraic varieties defined over $K$.
For any integer $k \ge 0$ the $k$-th cohomology of this family together with the embedding $K \subset \mathbb{C}$ gives rise to a polarizable $\mathbb{Q}$-variation of Hodge structure
$(\mathbb{V}, \mathcal{V}, \nabla, F^{\bullet})$, where
$\mathbb{V} = R^kf_{\mathbb{C},*} \underline{\mathbb{Q}}_{\mathcal{X}^{an}}$ is a $\mathbb{Q}$-local system on $S^{an}$,
$(\mathcal{V} = R^kf_{*} \Omega^{\bullet}_{\mathcal{X}/S}, \nabla)$ the corresponding algebraic vector bundle with flat connection and $F^{\bullet}$ is the Hodge filtration of the family.
Similarly, the étale cohomology of the family gives rise to an étale $\mathbb{Q}_{\ell}$-local system
$ \mathbb{L}= R^kf_{*} \underline{\mathbb{Q}_{\ell}}_{\mathcal{X}_{\acute{e}t}}$ on $S$ defined over $K$.
After choosing a point $\bar{s} \in S(\bar{K})$, we denote by $\rho_{\ell}: \pi_1^{\acute{e}t}(S_K,\bar{s}) \to GL(\mathbb{L}_{\bar{s}})$ the associated arithmetic monodromy representation.
In fact, instead of the full cohomology, we will often only consider its primitive part.

Any closed irreducible complex subvariety $Z \subset S$ can be defined over a finitely generated extension $L$ of $K$, we choose a geometric point $\bar{z }\in Z(\bar{L})$.
We define the \emph{generic Mumford-Tate group} $G_Z$ as the subgroup of $GL(\mathbb{V}_{\bar{z}})$ fixing all generic Hodge tensors over $Z$.
Similarly, we define the \emph{generic absolute Mumford-Tate group} $G^{AH}_Z$ to be the subgroup of $GL(\mathbb{V}_{\bar{z}})$ fixing all generic absolute Hodge tensors over $Z$.
In addition, we define the \emph{generic $\ell$-adic algebraic Galois group} $G_{\ell,Z}$ to be the connected component of the identity of the Zariski closure of the image of the restricted arithmetic monodromy representation
$ \rho_{\ell, Z}: \pi_1^{\acute{e}t}(Z_L , \bar{z}) \to GL(\mathbb{L}_{\bar{z}})$. One checks that the definition of $G_{\ell,Z}$ does not depend on the choice of the field $L$.
Note that if $Z= \{z\}$ is a point and $X$ is the fiber of $f$ over $z$, we recover the three groups $G_{X}$, $G_X^{AH}$ and $G_{\ell,X}$ defined in the beginning.

Since every absolute Hodge tensor is at the same time a Hodge tensor and a Tate tensor, we have the inclusions $G_Z \subset G_Z^{AH}$
and $G_{\ell,Z} \subset G_Z^{AH}\otimes \mathbb{Q}_{\ell}$. As we shall see in Section \ref{usualMT}, Conjectures \ref{HAH} and \ref{absMT} imply that both inclusions should be equalities.

\begin{conj} \label{conjdel}
For any subvariety $Z \subset S$, we have $G_{ Z }^{AH} = G_Z$.
\end{conj}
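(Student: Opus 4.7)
The plan is to reduce Conjecture \ref{conjdel} to its pointwise analog, Conjecture \ref{HAH}, applied at a sufficiently general fiber over $Z$. Since every absolute Hodge tensor is in particular a Hodge tensor, the inclusion $G_Z \subset G_Z^{AH}$ is automatic, and it suffices to establish the reverse inclusion, or equivalently, to show that every generic Hodge tensor over $Z$ is an absolute Hodge tensor.

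First I would pick a point $z \in Z(\mathbb{C})$ lying outside the Hodge locus of $Z$. By the Cattani--Deligne--Kaplan algebraicity theorem, this Hodge locus is a countable union of proper closed algebraic subvarieties of $Z$, so such very general $z$ exist and are in fact dense in $Z$. At any such $z$, a standard argument combining parallel transport of Hodge tensors along $Z^{an}$ with the theorem of the fixed part identifies the Mumford--Tate group $G_{X_z}$ of the fiber with the generic Mumford--Tate group $G_Z$. A parallel argument using Deligne's Principle B---that an absolute Hodge class spreads along flat sections of a polarizable variation of Hodge structure---yields the analogous identification $G_{X_z}^{AH} = G_Z^{AH}$ at such a very general $z$.

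Applying Conjecture \ref{HAH} at the single fiber $X_z$ then gives $G_{X_z} = G_{X_z}^{AH}$, and combining this with the two identifications above yields the desired equality $G_Z = G_Z^{AH}$.

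The main obstacle is precisely this last step: Conjecture \ref{HAH} is itself a central open problem, established by Deligne only for varieties whose motive is of abelian type (and a handful of further cases accessible by motivic techniques). The above strategy therefore yields Conjecture \ref{conjdel} unconditionally only for families of such varieties; in full generality one would need either new input on Conjecture \ref{HAH}, or a family-theoretic method producing absolute Hodge representatives for generic Hodge tensors directly over the base $Z$, bypassing the pointwise reduction.
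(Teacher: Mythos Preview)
Your proposal is correct and matches the paper's own treatment: since the statement is a conjecture, the paper does not prove it outright but shows in Section~\ref{usualMT} that it reduces to the pointwise Conjecture~\ref{HAH} by choosing a Hodge generic point $z \in Z(\mathbb{C})$ and invoking Lemma~\ref{lem} (which packages exactly the fixed-part theorem and Principle~B that you cite) to obtain $G_z = G_Z$ and $G_z^{AH} = G_Z^{AH}$. Your acknowledgment that the argument is conditional on Conjecture~\ref{HAH}, and unconditional only in the abelian-type case, is likewise consistent with the paper.
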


\begin{conj}\label{conjtate}
For any subvariety $Z \subset S$, we have $G_{\ell, Z } = G_Z^{AH} \otimes_{\mathbb{Q}} \mathbb{Q}_{\ell}$.
\end{conj}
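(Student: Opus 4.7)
The easy inclusion $G_{\ell,Z} \subset G_Z^{AH}\otimes_{\mathbb{Q}} \mathbb{Q}_\ell$ is built into the setup: any generic absolute Hodge tensor over $Z$ is in particular a Tate tensor, so it is fixed by a finite index subgroup of $\pi_1^{\acute{e}t}(Z_L,\bar{z})$, forcing the connected identity component of the Zariski closure of $\rho_{\ell,Z}$'s image to preserve all such tensors, and these tensors cut out exactly $G_Z^{AH}\otimes \mathbb{Q}_\ell$. The content of the conjecture is the reverse inclusion.

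My plan would proceed in three phases. First, by replacing $S$ with $Z$ and $f$ by its restriction, I reduce to the case $Z = S$, so that the statement becomes: the connected arithmetic $\ell$-adic monodromy group of the whole family equals the $\mathbb{Q}_\ell$-form of its generic absolute Mumford-Tate group. Second, I would set up a tensor-by-tensor dictionary between $\ell$-Galois special loci and Hodge-theoretic special loci. A $\mathbb{Q}_\ell$-tensor $t \in \mathbb{L}_{\bar z}^{\otimes}$ that is fixed by a finite index subgroup of $\rho_{\ell,Z}$ determines an $\ell$-Galois special subvariety through $\bar z$; if this subvariety can be identified, via the $\ell$-adic--Betti comparison, with a Hodge-theoretic special subvariety cut out by an absolute Hodge tensor, then $t$ itself lies in the $\mathbb{Q}_\ell$-span of absolute Hodge tensors, and the desired inclusion follows by varying $t$.

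The third phase -- systematically matching $\ell$-Galois special loci with Hodge-theoretic ones -- is the substantive content and is essentially equivalent to proving that Tate tensors on the generic fiber of $f$ are absolute Hodge, i.e.\ the absolute Mumford-Tate conjecture for the generic cohomology. In full generality this is open, so any realistic strategy must either impose a monodromy hypothesis on $Z$, in the spirit of this paper's main theorem, strong enough to force the $\ell$-adic image to fill out a finite index subgroup of $G_Z^{AH}\otimes \mathbb{Q}_\ell$, or else restrict to motives of abelian type, where Deligne's theorem that Hodge classes on abelian varieties are absolute Hodge can be combined with the existing partial results on Mumford-Tate.

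The main obstacle is precisely this third phase: matching the $\ell$-adic picture with the Hodge picture at the level of individual tensors is tantamount to the Tate conjecture for tensor powers of the generic fiber's cohomology, so an unconditional proof seems out of reach and I would expect only conditional theorems, parametrized by the strength of an assumed monodromy condition along $Z$.
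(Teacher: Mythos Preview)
The statement you are attempting to prove is Conjecture~\ref{conjtate}, not a theorem, and the paper does not claim or provide a proof of it. The paper treats this as an open problem equivalent to the absolute Mumford-Tate conjecture (Conjecture~\ref{absMT}); see Section~\ref{usualMT}, where the equivalence is established by passing to an $\ell$-Galois generic point of $Z$. Your proposal correctly recognises this in its third phase: you observe that matching Tate tensors to absolute Hodge tensors on the generic fiber is essentially the absolute Mumford-Tate conjecture itself, and that an unconditional argument is out of reach. So there is no proof in the paper to compare against, and your proposal is not a proof but an honest outline of why the conjecture is hard.

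One remark on the inclusion $G_{\ell,Z} \subset G_Z^{AH}\otimes_{\mathbb{Q}} \mathbb{Q}_\ell$: you describe it as ``built into the setup'', but the paper proves it as a separate Proposition at the end of Section~1, adapting an argument of Deligne. The point is that $\mathrm{Aut}(\mathbb{C}/\mathbb{Q})$ preserves absolute Hodge classes, so $\mathrm{Gal}(\bar L/L)$ acts on the finite-dimensional $\mathbb{Q}$-space $\mathcal{AH}_Z^{\otimes(m,n)}$, and a profinite group acting continuously on such a space must factor through a finite quotient. Your compressed sketch (``any generic absolute Hodge tensor is a Tate tensor, so fixed by a finite-index subgroup'') encodes the same idea, but the finite-index step genuinely uses the $\mathbb{Q}$-rationality of absolute Hodge classes combined with continuity, and is not entirely automatic.
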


A great benefit of working in families is that we get natural algebraic subvarieties of the base $S$ which are defined by imposing the existence of certain generic Hodge cycles.

\begin{mydef}[\cite{KO}, Definition 1.2] \label{defspintro}
A closed irreducible complex subvariety $Z \subset S$ is called \emph{special} if it is maximal among the closed irreducible complex subvarieties $Y$ of $S$ whose generic Mumford-Tate group $G_Y$ equals $G_Z$.
\end{mydef}

The above Conjectures \ref{conjdel} and \ref{conjtate} suggest to define the following variants of special subvarieties:

\begin{mydef}[see Definitions \ref{definitionspecial} and \ref{defGalsp}]\label{introdefGalsp}
\textnormal{ }
\begin{enumerate}[(i)]
\item
A closed irreducible complex subvariety $Z \subset S$ is called \emph{absolutely special} if it is maximal among the closed irreducible complex subvarieties $Y$ of $S$ whose generic absolute Mumford-Tate group $G^{AH}_Y$ equals $G^{AH}_Z$.
\item
A closed irreducible complex subvariety $Z \subset S$ is called \emph{$\ell$-Galois special} if it is maximal among the closed irreducible complex subvarieties $Y$ of $S$ whose generic $\ell$-adic algebraic Galois group $G_{\ell,Y}$ equals $G_{\ell,Z}$.
\end{enumerate}
\end{mydef}

A variant of the notion of absolutely special subvarieties which only incorporates the de Rham and not the étale components of absolute Hodge cycles was introduced and studied in \cite{Abssppaper}.
However, in this paper we use the stronger notion of absolute Hodge cycles to ensure that every absolutely special subvariety is both special and $\ell$-Galois special (Proposition \ref{absspecial}).
Conjectures \ref{conjdel} and \ref{conjtate} predict that in fact all three notions are equivalent.
We are thus led to formulate the following conjectures.

\begin{conj}\label{everyspabssp}
Any special subvariety is absolutely special.
\end{conj}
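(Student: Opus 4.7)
My plan is to reduce Conjecture~\ref{everyspabssp} to Conjecture~\ref{conjdel} applied at the generic fiber of $Z$, obtaining a clean conditional proof that seems tight.

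First, I would record the monotonicity of generic (absolute) Mumford-Tate groups under inclusion of subvarieties. For $Z \subseteq Y$, a generic (absolute) Hodge tensor on $Y$ restricts to a generic (absolute) Hodge tensor on $Z$, because the Mumford-Tate group at any point of $Y$ (in particular at a very general point of $Z$) is contained in $G_Y$. This yields $G_Z \subseteq G_Y$ and $G_Z^{AH} \subseteq G_Y^{AH}$ for any $Y \supseteq Z$, which, combined with the tautological $G_W \subseteq G_W^{AH}$, gives the chain
\[
G_Z \subseteq G_Z^{AH} \subseteq G_Y^{AH}, \qquad G_Z \subseteq G_Y \subseteq G_Y^{AH}.
\]

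Next, let $Z$ be special and let $Y \supseteq Z$ be a closed irreducible complex subvariety of $S$ with $G_Y^{AH} = G_Z^{AH}$; the goal is to conclude $Y = Z$. Assuming Conjecture~\ref{conjdel} holds for $Z$, that is $G_Z = G_Z^{AH}$, the chain collapses to
\[
G_Z \;=\; G_Z^{AH} \;=\; G_Y^{AH} \;\supseteq\; G_Y \;\supseteq\; G_Z,
\]
forcing $G_Y = G_Z$. The definition of special then gives $Y = Z$, as required.

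The main obstacle is Conjecture~\ref{conjdel} itself, known only in the abelian case via Deligne. Without it, the strict inclusion $G_Z \subsetneq G_Y$ (forced by specialness of $Z$ together with $Y \supsetneq Z$) produces a generic Hodge tensor on $Z$ that fails to be a generic Hodge tensor on $Y$, and only the absolute Hodge property of such a tensor can contradict $G_Y^{AH} = G_Z^{AH}$. I do not see a way around this: Conjecture~\ref{everyspabssp} looks like the ``maximality shadow'' of the pointwise Hodge-implies-absolute-Hodge equality, and absent new input---either Principle~B arguments producing absolute Hodge classes directly out of the extra generic Hodge tensors, or étale input from the $\ell$-Galois side via Conjecture~\ref{conjtate}---the conditional reduction to~\ref{conjdel} appears to be essentially optimal.
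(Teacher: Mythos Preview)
The statement is a \emph{conjecture}; the paper does not prove it in general. Your conditional reduction---assume $G_Z=G_Z^{AH}$ for the given special $Z$, then from $G_Y^{AH}=G_Z^{AH}$ deduce $G_Y=G_Z$ and hence $Y=Z$---is correct, and it is precisely the easy implication the paper alludes to when it says that Conjectures~\ref{conjdel} and~\ref{conjtate} ``predict that in fact all three notions are equivalent.'' So as a proof it is not one: you have only rederived that Conjecture~\ref{everyspabssp} is a (rather immediate) consequence of Conjecture~\ref{conjdel}, which you yourself flag as the obstacle.

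The paper's unconditional contribution toward Conjecture~\ref{everyspabssp} is Theorem~\ref{equiv} (via Proposition~\ref{wswnf}), which proves the conclusion for \emph{weakly non-factor} subvarieties. That argument is orthogonal to yours: given $Y\supseteq Z$ with $G_Y^{AH}=G_Z^{AH}$, one uses that $H_Z$ and $H_Y$ are both normal in this common group (Proposition~\ref{normal}), so $H_Z\trianglelefteq H_Y$; the weakly non-factor hypothesis forces $H_Z=H_Y$, and weak specialness of $Z$ (Lemma~\ref{sabGsws}) gives $Y=Z$. This avoids Conjecture~\ref{conjdel} entirely, at the cost of a monodromy hypothesis; the paper explicitly notes that this strategy gives no information on the groups $G_Z$, $G_Z^{AH}$ themselves and cannot be pushed to prove Conjecture~\ref{conjdel}. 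If you want to engage with what the paper actually proves, it is this monodromy argument you should reproduce, not the tautological reduction to~\ref{conjdel}.
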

\begin{conj}\label{everyGalspabssp}
Any $\ell$-Galois special subvariety is absolutely special.
\end{conj}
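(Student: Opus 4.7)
The plan is to reduce the statement to Conjecture \ref{conjtate}, since Conjecture \ref{everyGalspabssp} is naturally the $\ell$-adic shadow of the absolute Mumford--Tate conjecture for families.

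Given an $\ell$-Galois special subvariety $Z \subset S$, the first step is to introduce the smallest absolutely special subvariety $W \subset S$ containing $Z$; its existence follows from the fact that the class of absolutely special subvarieties is closed under taking an irreducible component of an intersection. By construction one has $G_Z^{AH} = G_W^{AH}$, and functoriality of étale fundamental groups applied to the inclusion $Z \hookrightarrow W$ yields $G_{\ell, Z} \subseteq G_{\ell, W}$; both groups lie inside $G_Z^{AH} \otimes_\mathbb{Q} \mathbb{Q}_\ell$.

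The key step is to upgrade this inclusion to an equality. Conjecture \ref{conjtate}, applied to both $Z$ and $W$, would give
$$G_{\ell, Z} = G_Z^{AH} \otimes_\mathbb{Q} \mathbb{Q}_\ell = G_W^{AH} \otimes_\mathbb{Q} \mathbb{Q}_\ell = G_{\ell, W}.$$
The maximality clause in the definition of $\ell$-Galois special then forces $Z = W$, so $Z$ is absolutely special, as desired.

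The main obstacle is Conjecture \ref{conjtate} itself, which is precisely the absolute Mumford--Tate conjecture for (generic fibers over) subvarieties and is open in most interesting cases beyond abelian motives. To extract unconditional progress, the natural move is to replace the full conjecture by a weaker hypothesis: for instance, a ``big monodromy'' condition on the variation restricted to $Z$ guaranteeing that any strict enlargement $W \supsetneq Z$ must introduce new $\ell$-adic monodromy, thereby forcing $G_{\ell, Z} \subsetneq G_{\ell, W}$. One would then only need Conjecture \ref{conjtate} to hold ``up to the monodromy gap'' rather than on the nose. This fits with the abstract's announcement that the three notions coincide under a simple monodromy hypothesis.
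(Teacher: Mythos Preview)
The statement is labelled a \emph{Conjecture} in the paper and is not proved there in general. Your reduction to Conjecture~\ref{conjtate} is correct and is exactly the reasoning the paper gives when introducing the conjecture (``Conjectures~\ref{conjdel} and~\ref{conjtate} predict that in fact all three notions are equivalent''). As a conditional implication there is no gap, but it is not an unconditional proof, as you yourself acknowledge.

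Two remarks. First, the auxiliary ``smallest absolutely special $W$'' is an unnecessary detour, and the closure of absolutely special subvarieties under irreducible components of intersections is not obviously justified. The argument runs more cleanly straight from the definition: if $Y \supset Z$ satisfies $G_Y^{AH}=G_Z^{AH}$, then Conjecture~\ref{conjtate} yields $G_{\ell,Y}=G_{\ell,Z}$, so $Y=Z$ by $\ell$-Galois speciality of $Z$; hence $Z$ is absolutely special.

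Second, your final paragraph correctly anticipates that the paper's unconditional progress goes through a monodromy hypothesis, but the mechanism differs from what you sketch. The paper does not compare $G_{\ell,Z}$ with $G_{\ell,W}$ for an enlargement $W$ and look for a ``monodromy gap'' on the $\ell$-adic side. Instead it introduces the \emph{weakly non-factor} condition and shows (Proposition~\ref{wswnf}, Theorem~\ref{equiv}) that for such $Z$, weakly special already implies absolutely special: if $Y \supset Z$ has $G_Y^{AH}=G_Z^{AH}$, then $H_Z$ and $H_Y$ are both normal in this common group (Proposition~\ref{normal}), hence $H_Z \trianglelefteq H_Y$; the weakly non-factor hypothesis forbids this inclusion from being strict, so $H_Z=H_Y$, and since $\ell$-Galois special subvarieties are weakly special (Lemma~\ref{sabGsws}) one gets $Z=Y$. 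The generic $\ell$-adic Galois groups themselves play no direct role in this step.
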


For general families very little is known concerning Conjectures \ref{conjdel} and \ref{conjtate}.
In this paper, we prove Conjectures \ref{everyspabssp} and \ref{everyGalspabssp} for subvarieties satisfying a simple monodromy condition introduced in \cite{KOU}.

We define the algebraic monodromy group $H_Z$ of a closed irreducible subvariety $Z \subset S$ to be the connected component of the identity of the Zariski closure of the image of the topological monodromy representation $\rho_Z: \pi_1(Z^{an},\bar{z}) \to GL(\mathbb{V}_{\bar{z}}) $ corresponding to the restriction of the local system $\mathbb{V}$ to $Z$.

\begin{mydef}[\cite{KOU}, Definition 1.10]
A closed irreducible subvariety $Z \subset S$ is called \emph{weakly non-factor} if it is not contained in a closed irreducible $Y \subset S$ such that $H_{Z}$ is a strict normal subgroup of $H_{Y}$.
\end{mydef}

\begin{thm}[see Theorem \ref{equiv}]\label{mainthm}
For a weakly non-factor subvariety $Z$, the following are equivalent:

\begin{enumerate}[(i)]
\item
the subvariety $Z$ is special;
\item
the subvariety $Z$ is $\ell$-Galois special for all primes $\ell$;
\item
the subvariety $Z$ is absolutely special.
\end{enumerate}

\end{thm}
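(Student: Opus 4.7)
The implications $(iii) \Rightarrow (i)$ and $(iii) \Rightarrow (ii)$ are immediate consequences of Proposition \ref{absspecial}: any strict enlargement $Y \supsetneq Z$ witnessing that $Z$ fails to be special (or $\ell$-Galois special) would, via the inclusions $G_Y \subset G_Y^{AH}$ and $G_{\ell,Y} \subset G_Y^{AH} \otimes \mathbb{Q}_\ell$, also witness that $Z$ fails to be absolutely special, contradicting $(iii)$. The substance of the theorem lies in the converse implications $(i) \Rightarrow (iii)$ and $(ii) \Rightarrow (iii)$, where the weakly non-factor hypothesis is essential. I outline the plan for $(i) \Rightarrow (iii)$; the argument for $(ii) \Rightarrow (iii)$ will be parallel, with the $\ell$-adic geometric monodromy $H_Z \otimes \mathbb{Q}_\ell$, normal in $G_{\ell,Z}$, playing the role of $H_Z$ inside $G_Z^{\mathrm{der}}$.

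Let $Z$ be weakly non-factor and special. Using that absolutely special subvarieties form a countable collection of algebraic subvarieties (an analog of Cattani--Deligne--Kaplan for absolute Hodge cycles, which should have been established earlier in the paper in the spirit of \cite{Abssppaper}), I take $W$ to be the smallest absolutely special subvariety containing $Z$; then $G_W^{AH} = G_Z^{AH}$, and $W$ is itself special by Proposition \ref{absspecial}. The maximality of $Z$ as a special subvariety reduces the goal $Z = W$ to showing $G_W = G_Z$. Inside the common ambient group $G_W^{AH} = G_Z^{AH}$, one has $G_Z \subset G_W$ and correspondingly $H_Z \subset H_W$; weakly non-factor applied to $Z \subset W$ yields the dichotomy that either $H_Z = H_W$, or $H_Z$ is not normal in $H_W$.

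The crux is thus to rule out the latter, i.e.\ to prove $H_Z \triangleleft H_W$. The plan is to combine the Andr\'e--Deligne normality, $H_Z \triangleleft G_Z^{\mathrm{der}}$ and $H_W \triangleleft G_W^{\mathrm{der}}$, with the equality $G_Z^{AH} = G_W^{AH}$: via a Tannakian argument using that the generic absolute Hodge tensors on $Z$ and $W$ coincide, the quotient $G_W^{\mathrm{der}}/G_Z^{\mathrm{der}}$ should embed into the central part of $G_W^{AH}$, so that $G_Z^{\mathrm{der}}$ becomes a normal subgroup of $G_W^{\mathrm{der}}$; intersecting with $H_W$ then yields $H_Z \triangleleft H_W$, and weakly non-factor upgrades this to $H_Z = H_W$. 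A final comparison of the central tori of $G_Z$ and $G_W$, both sitting inside the connected center of the common $G_W^{AH}$, then delivers $G_Z = G_W$. The main obstacle I anticipate is precisely the Tannakian step identifying $G_Z^{\mathrm{der}}$ as a normal subgroup of $G_W^{\mathrm{der}}$ from the equality of absolute Mumford-Tate groups: this is where the delicate interplay between the absolute Hodge data (preserved from $Z$ to $W$), the genuine Hodge data (which may differ), and the monodromy must be carefully controlled, and where the weakly non-factor hypothesis must be used beyond the formal dichotomy it immediately provides.
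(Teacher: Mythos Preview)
Your proposal contains a genuine gap at precisely the point you yourself flag as ``the main obstacle'': the Tannakian step claiming that $G_Z^{\mathrm{der}}$ is normal in $G_W^{\mathrm{der}}$ from the equality $G_Z^{AH} = G_W^{AH}$. There is no reason this should hold, and you give no mechanism for it; the inclusion $G_Z \subset G_W \subset G_Z^{AH}$ does not by itself force any normality of $G_Z^{\mathrm{der}}$ inside $G_W^{\mathrm{der}}$. Even granting that step, your passage ``intersecting with $H_W$ then yields $H_Z \triangleleft H_W$'' is not justified either, since $H_Z$ need not equal $G_Z^{\mathrm{der}} \cap H_W$.

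The paper avoids this entirely by strengthening Andr\'e--Deligne normality: Proposition~\ref{normal}(i) records (using Deligne's Principle~B) that $H_Z$ is a normal subgroup of $G_Z^{AH}$, not merely of $G_Z^{\mathrm{der}}$. Once you have this, the argument is a one-liner: for any $Y \supset Z$ with $G_Y^{AH} = G_Z^{AH}$, both $H_Z$ and $H_Y$ are normal in the common group $G_Z^{AH} = G_Y^{AH}$, so $H_Z \triangleleft H_Y$; weakly non-factor then forces $H_Z = H_Y$. There is no need to pass through $G_Z^{\mathrm{der}}$, no Tannakian step, and no comparison of central tori.

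Your final step~8 is also unnecessary. Once $H_Z = H_W$, you do not need to recover $G_Z = G_W$: since special (and likewise $\ell$-Galois special) implies weakly special (Lemma~\ref{sabGsws}), the equality $H_Z = H_W$ already gives $Z = W$ directly. In fact the paper organises the whole proof around this: it introduces ``weakly special'' as a fourth equivalent condition, observes that all three notions imply it, and then proves in one stroke (Proposition~\ref{wswnf}) that weakly special plus weakly non-factor implies absolutely special. This also removes the need to construct a ``smallest absolutely special subvariety $W$'' and to invoke any Cattani--Deligne--Kaplan-type countability.
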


The proof of Theorem \ref{mainthm} uses monodromy methods, and does not give information on the relation of the groups $G_Z$, $G_Z^{AH}$ and $G_{\ell,Z}$ to each other. In particular, the strategy of the proof cannot be extended to prove Conjectures \ref{conjdel} or \ref{conjtate}.

\subsection*{Exceptional loci}

Recall that a point $s \in S(\mathbb{C})$ is called \emph{Hodge generic} if $G_s = G_S$, and \emph{$\ell$-Galois generic} if $G_{\ell,s}=G_{\ell,S}$.
One often studies the complement of the set of Hodge resp. $\ell$-Galois generic points as the locus where we expect exceptional Hodge resp. Tate tensors to occur.

\begin{mydef}
We define the \emph{Hodge locus} $HL$ of $S$ to be the set of $s \in S(\mathbb{C})$ such that $G_s \subsetneq G_S$.
Similarly, we define the \emph{absolute Hodge locus} $AHL$ (respectively the \emph{$\ell$-Galois exceptional locus} $\ell-GalL$) to be the set of all $s \in S(\mathbb{C})$ such that $G_s^{AH} \subsetneq G_S^{AH}$ (respectively $G_{\ell, s} \subsetneq G_{\ell,S}$).
\end{mydef}

It is easy to see that the loci $HL$ (resp. $AHL$, $\ell-GalL$) are exactly the union of all strict special (resp. absolutely special, $\ell$-Galois special) subvarieties of $S$.
A fundamental result in Hodge theory due to Cattani-Deligne-Kaplan \cite{CDK} shows that there are only countably many special subvarieties of $S$, and thus the Hodge locus is a countable union of closed algebraic subvarieties, as predicted by the Hodge conjecture.

One easily checks that absolutely special subvarieties are defined over finite extensions of $K$ and the collection of absolutely special subvarieties is stable under $\mathrm{Gal}(\bar{K} /K)$.
Note however that this is unknown in general for special subvarieties, with notable exception the results in \cite{KOU}.
We will see in Corollary \ref{fodgalsp} that the same holds for $\ell$-Galois special subvarieties, thus proving the following $\ell$-adic analog of the theorem on the algebraicity of Hodge loci:

\begin{thm}[see Theorem \ref{GalLcountable}]\label{introGalLcountable}
The $\ell$-Galois exceptional locus $\ell-GalL$ is a countable union of closed algebraic subvarieties of $S$ defined over $\bar{K}$, and stable under the action of $\mathrm{Gal}(\bar{K}/K)$.
\end{thm}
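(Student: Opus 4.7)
The plan is to deduce the theorem from the companion statement flagged in the excerpt as Corollary \ref{fodgalsp}: every $\ell$-Galois special subvariety of $S$ is defined over $\bar{K}$, and the collection of all such subvarieties is stable under $\mathrm{Gal}(\bar{K}/K)$. Granting this, the theorem is essentially formal. The excerpt already observes that $\ell-GalL$ coincides with the union of the strict $\ell$-Galois special subvarieties of $S$: a point $s\in \ell-GalL$ lies in the maximal closed irreducible $Z\subsetneq S$ with $G_{\ell,Z}=G_{\ell,s}$, and conversely every strict $\ell$-Galois special subvariety is visibly contained in $\ell-GalL$. Combined with the corollary, this exhibits $\ell-GalL$ as a union of closed algebraic subvarieties defined over $\bar{K}$, with the collection of pieces $\mathrm{Gal}(\bar{K}/K)$-stable.

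Countability is then a general fact. Since $K$ is finitely generated over $\mathbb{Q}$, its algebraic closure $\bar{K}$ is countable, hence any scheme of finite type over $\bar{K}$ has only countably many $\bar{K}$-points. After fixing a projective compactification of $S$ and decomposing the corresponding Hilbert scheme as a countable disjoint union of quasi-projective $K$-schemes indexed by Hilbert polynomial, one concludes that $S$ has only countably many closed subvarieties defined over $\bar{K}$. In particular there are only countably many $\ell$-Galois special subvarieties, so $\ell-GalL$ is a countable union of closed algebraic subvarieties of $S$ defined over $\bar{K}$.

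The real work is therefore in Corollary \ref{fodgalsp}. Galois equivariance should follow from the fact that $\mathbb{L}$ is defined over $K$: for $Z$ defined over a finite extension $L/K$ and $\sigma\in\mathrm{Gal}(\bar{K}/K)$, transport of structure identifies $\rho_{\ell,Z^{\sigma}}$ with the representation obtained from $\rho_{\ell,Z}$ by the $\sigma$-action on the arithmetic fundamental group in the exact sequence $1\to \pi_1^{\acute{e}t}(S_{\bar{K}},\bar{s})\to \pi_1^{\acute{e}t}(S_K,\bar{s})\to \mathrm{Gal}(\bar{K}/K)\to 1$. Thus $G_{\ell,Z^{\sigma}}$ is conjugate to $G_{\ell,Z}$ inside $GL(\mathbb{L}_{\bar{s}})$, and the $\ell$-Galois maximality of $Z$ transfers to $Z^{\sigma}$. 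The genuinely delicate point is to upgrade a priori complex-analytic $\ell$-Galois special subvarieties to algebraic subvarieties defined over $\bar{K}$: the natural strategy is to show that $\ell$-Galois maximality is cut out on $S$ by $\mathrm{Gal}(\bar{K}/K)$-invariant data on the $K$-rational local system $\mathbb{L}$, so that an $\ell$-Galois special $Z$ has only finitely many $\mathrm{Gal}(\bar{K}/K)$-conjugates, forcing descent to $\bar{K}$. This descent argument is where I would expect the proof of Corollary \ref{fodgalsp} to demand the most care.
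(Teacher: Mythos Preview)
Your deduction of the theorem from Corollary~\ref{fodgalsp} is correct and is exactly the paper's argument: identify $\ell\text{-}GalL$ with the union of strict $\ell$-Galois special subvarieties, invoke the corollary for definability over $\bar K$ and Galois stability, and use that $\bar K$ is countable so that $S$ has only countably many closed $\bar K$-subvarieties.

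Where you diverge from the paper is in your sketch of Corollary~\ref{fodgalsp} itself. The paper does \emph{not} argue that an $\ell$-Galois special $Z$ has finitely many Galois conjugates and then descend. Instead it proves directly (the proposition preceding the corollary) that for \emph{any} closed irreducible complex subvariety $Z\subset S$, if $Y$ denotes the smallest closed $\bar K$-subvariety containing $Z$, then $G_{\ell,Z}=G_{\ell,Y}$. The key point is that the generic point of $Z_L$ maps to the generic point of $Y_E$, so one compares the absolute Galois groups of the function fields $\kappa(\eta_Z)$ and $\kappa(\eta_Y)$; since these are finitely generated fields, the natural map is surjective onto a finite-index subgroup, giving equality of identity components of Zariski closures. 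Maximality of $Z$ then forces $Z=Y$, hence $Z$ is defined over $\bar K$.

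Your proposed route has a gap as stated: before one knows $Z$ is defined over $\bar K$, speaking of ``$\mathrm{Gal}(\bar K/K)$-conjugates'' is premature; one must work with $\mathrm{Aut}(\mathbb{C}/K)$-conjugates, and the isomorphism $G_{\ell,Z^\sigma}\cong G_{\ell,Z}$ from Proposition~\ref{conjgalsp} gives no finiteness of this orbit. The paper's function-field argument sidesteps this entirely.
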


This statement is implied by the Tate conjecture, since it implies that $\ell-GalL$ is dominated by a countable union of relative Hilbert schemes using an argument similar to the one in (\cite{Voisin}, 1.1).

According to Conjecture \ref{conjdel} and Conjecture \ref{conjtate}, all three loci $HL$, $AHL$ and $\ell-GalL$ should coincide.
We will prove that this equality of loci holds true if one restricts to the "positive dimensional" part of these loci and the derived group $G_S^{der}$ is simple.

We say that a closed irreducible subvariety $Z \subset S$ is \emph{of positive period dimension} if $H_Z \not=1$. This is equivalent to saying that the image of $Z$ under the complex period map is not a point.
Denote by $HL_{pos}$ (resp. $AHL_{pos}$, $\ell-GalL_{pos}$) the union of all strict special (resp. strict absolutely special, strict $\ell$-Galois special) subvarieties $Z$ of $S$ which are of positive period dimension.

\begin{thm}[see Theorem \ref{loci}]\label{lociintro}
Suppose that $G_S^{der}$ is simple.
Then $$HL_{pos} = AHL_{pos} = \ell-GalL_{pos}.$$
\end{thm}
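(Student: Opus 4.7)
The inclusions $AHL_{pos} \subset HL_{pos}$ and $AHL_{pos} \subset \ell-GalL_{pos}$ are immediate from Proposition \ref{absspecial}, since absolutely special subvarieties are simultaneously special and $\ell$-Galois special. The theorem therefore reduces to the reverse inclusions, which I plan to handle by a uniform strategy: given a strict special (resp.\ strict $\ell$-Galois special) subvariety $Z \subset S$ of positive period dimension, I will construct a weakly non-factor, strict, special (resp.\ $\ell$-Galois special) subvariety $Y \supset Z$ of positive period dimension, and then invoke Theorem \ref{mainthm} to upgrade $Y$ to an absolutely special subvariety of $AHL_{pos}$.

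For $HL_{pos} \subset AHL_{pos}$, I take $Y$ to be a maximal-dimensional element of the nonempty set $\mathcal{Y} = \{Y' : Z \subset Y' \subsetneq S,\ Y' \text{ is special}\}$. The key claim is that $Y$ is weakly non-factor. Assuming the contrary, there exists $W \supsetneq Y$ with $H_Y$ a strict normal subgroup of $H_W$. Let $W^{sp}$ denote the smallest special subvariety containing $W$; it satisfies $G_{W^{sp}} = G_W$ and properly contains $Y$, so the maximality of $Y$ in $\mathcal{Y}$ forces $W^{sp} = S$, hence $G_W = G_S$. By the Deligne-Andr\'e theorem on algebraic monodromy of a variation of Hodge structure, $H_W$ is a normal subgroup of $G_W^{der} = G_S^{der}$, which is simple by hypothesis. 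Combined with the strict normal inclusion $H_Y \lhd H_W$ and with $H_Y \supset H_Z \neq 1$, this rules out both $H_W = 1$ and $H_W = G_S^{der}$, a contradiction. Hence $Y$ is weakly non-factor, and Theorem \ref{mainthm} makes it absolutely special. Since $H_Y \supset H_Z \neq 1$, the subvariety $Y$ has positive period dimension, so $Z \subset Y \in AHL_{pos}$.

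For $\ell-GalL_{pos} \subset AHL_{pos}$ the argument is parallel with $\ell$-Galois special replacing special throughout; the analogue of $W^{sp}$ is the smallest $\ell$-Galois special subvariety $W^{\ell Gal}$ containing $W$, which satisfies $G_{\ell, W^{\ell Gal}} = G_{\ell, W}$ and is forced by maximality to equal $S$, yielding $G_{\ell, W} = G_{\ell, S}$. The new ingredient compared to the Hodge case is that the connected geometric $\ell$-adic monodromy group on $W$ is identified with $H_W \otimes \mathbb{Q}_{\ell}$ and is a normal subgroup of the arithmetic group $G_{\ell, W}$. Since $H_S \otimes \mathbb{Q}_{\ell}$ lies in $G_{\ell, S} = G_{\ell, W}$, it normalizes $H_W \otimes \mathbb{Q}_{\ell}$, and so $H_W$ is a normal subgroup of $H_S = G_S^{der}$. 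Simplicity of $G_S^{der}$ together with $H_Y \supset H_Z \neq 1$ and the strict normal inclusion $H_Y \lhd H_W$ then produces the same contradiction as before, and Theorem \ref{mainthm} again upgrades $Y$ to an absolutely special subvariety of $AHL_{pos}$.

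The main obstacle in this plan is precisely the verification that the maximal $Y$ is weakly non-factor. This step combines three inputs: the simplicity of $G_S^{der}$ to restrict possible normal subgroups of $H_S$, the Deligne-Andr\'e normality $H_W \lhd G_W^{der}$ in the Hodge setting, and in the $\ell$-adic setting the normality of the connected geometric monodromy $H_W \otimes \mathbb{Q}_{\ell}$ inside $G_{\ell, W}$. Once this step is secured, the remainder of the argument is a direct appeal to Theorem \ref{mainthm} and to Proposition \ref{absspecial}.
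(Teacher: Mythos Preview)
Your proposal is correct and follows essentially the same route as the paper. The paper packages the key step into Corollaries~\ref{application1} and~\ref{Corrolary}: a \emph{maximal} strict special (resp.\ $\ell$-Galois special) subvariety of positive period dimension is weakly non-factor when $G_S^{der}$ is simple, hence absolutely special by Theorem~\ref{equiv}. Your argument is the same, just unpacked---you start from an arbitrary $Z$, pass to a maximal $Y\supset Z$, and then carry out explicitly the weakly non-factor verification (via the special closure $W^{sp}$, resp.\ $W^{\ell Gal}$, and the normality of monodromy in $G_S^{der}$) that the paper defers to those corollaries and to \cite{KOU}. One cosmetic point: you do not need the intermediate object $W^{\ell Gal}$; as in the paper's proof of Corollary~\ref{Corrolary}, maximality of $Y$ directly forces $G_{\ell,W}=G_{\ell,S}$, since any $\ell$-Galois special subvariety containing $W$ would strictly contain $Y$.
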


The Theorem can be used to transfer recent results on the Hodge locus of positive period dimension (cf. \cite{KO}, \cite{BKU}) to the $\ell$-Galois exceptional locus of positive period dimension. For example, using \cite{BKU} we give a criterion for $\ell-GalL_{pos}$ to be a \emph{finite} union of $\ell$-Galois special subvarieties (Corollary \ref{ellGalLfiniteunion}).
Note that it also follows from the Theorem that $\ell-GalL_{pos}$ is independent of $\ell$, something which is not at all clear for the full $\ell$-Galois exceptional locus.

Theorem \ref{lociintro} demonstrates that we have good control over the special or $\ell$-Galois special subvarieties of positive period dimension (at least when $G_S^{der}$ is simple),
whereas we are not able to say anything about special points or $\ell$-Galois special points.

\subsection*{Applications to the Mumford-Tate conjecture}
Using the results \cite{BKU} of Baldi-Klingler-Ullmo on the structure of the Hodge locus of positive period dimension, we prove that the absolute Mumford-Tate conjecture \ref{absMT} holds for many projective hypersurfaces defined over transcendental extensions of $\mathbb{Q}$. We denote by $\mathcal{M}_{d,n}$ the moduli space of smooth hypersurfaces of degree $d$ in $\mathbb{P}^{n+1}$.

\begin{thm}[see Corollary \ref{corprojhyp}]\label{introcorprojhyp}
Assume that $n \ge 3$, $d \ge 5$, and $(n,d) \not= (4,5)$. There is a dense open $\mathbb{Q}$-subvariety $U \subset \mathcal{M}_{d,n}$ such that the absolute Mumford-Tate conjecture for $H^n_{\mathrm{prim}}$ holds for all $x \in U(\mathbb{C}) \setminus U(\bar{\mathbb{Q}})$.
\end{thm}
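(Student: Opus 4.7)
The strategy is to combine Theorem \ref{lociintro} with the structure results of \cite{BKU} on the Hodge locus of positive period dimension, and then to observe that a complex point not defined over $\bar{\mathbb{Q}}$ automatically traces out a subvariety of positive dimension which, under the stated hypotheses, has positive period dimension.

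Let $S=\mathcal{M}_{d,n}$, endowed with the $\mathbb{Q}$-variation of Hodge structure on the primitive cohomology $H^n_{\mathrm{prim}}$ of the universal family. Under the assumptions $n\ge 3$, $d\ge 5$, $(n,d)\ne(4,5)$, the generic Mumford-Tate group $G_S$ is the full orthogonal or symplectic group of the intersection form (Beauville-Deligne), so $G_S^{\mathrm{der}}$ is simple; moreover the VHS has level at least three and top primitive Hodge number $h^{n,0}\ge 1$. The first fact allows us to apply Theorem \ref{lociintro}, yielding $HL_{pos} = AHL_{pos} = \ell-GalL_{pos}$; the second allows us to apply the main result of \cite{BKU}, which ensures that $HL_{pos}$ is a \emph{finite} union of strict special subvarieties of $S$. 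Since this locus coincides with $AHL_{pos}$ it is stable under $\mathrm{Gal}(\bar{\mathbb{Q}}/\mathbb{Q})$, so the set $U := S\setminus HL_{pos}$ is a dense open $\mathbb{Q}$-subvariety of $\mathcal{M}_{d,n}$.

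Now take any $x\in U(\mathbb{C})\setminus U(\bar{\mathbb{Q}})$ and let $Z\subset S$ be its $\bar{\mathbb{Q}}$-Zariski closure; since $x$ is transcendental over $\bar{\mathbb{Q}}$, $\dim Z\ge 1$. Here the excluded case $(n,d)=(4,5)$ is needed: for hypersurfaces in the remaining range, infinitesimal (and generic) Torelli after Flenner and Donagi ensures that the period map on $S$ is quasi-finite, so every positive-dimensional subvariety has positive period dimension, and in particular so does $Z$. Let $Z^{AH}$ and $Z^{\ell}$ denote the smallest absolutely special and $\ell$-Galois special subvarieties of $S$ containing $Z$. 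Each has positive period dimension and contains $x\in U$, so neither can lie in $HL_{pos} = AHL_{pos} = \ell-GalL_{pos}$; hence $Z^{AH} = Z^{\ell} = S$. Consequently $G_x^{AH}=G_S^{AH}$ and $G_{\ell,x}=G_{\ell,S}$.

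To conclude it suffices to establish the absolute Mumford-Tate conjecture at the generic point of $S$, i.e., $G_{\ell,S}=G_S^{AH}\otimes_{\mathbb{Q}}\mathbb{Q}_{\ell}$. By Deligne's results on the monodromy of Lefschetz pencils, $G_{\ell,S}$ is the full orthogonal or symplectic group; on the other hand $G_S\subset G_S^{AH}$, and both preserve the polarization and thus sit inside the corresponding similitude group, which forces the equality. Transporting via the previous paragraph yields $G_{\ell,x}=G_x^{AH}\otimes_{\mathbb{Q}}\mathbb{Q}_{\ell}$, as required. The main technical issue I anticipate is the precise verification that the hypotheses of \cite{BKU} and the quasi-finiteness of the period map hold uniformly throughout the stated range of $(n,d)$, together with a careful check that the excluded pair $(n,d)=(4,5)$ is exactly the place where the top primitive Hodge number vanishes and the argument degenerates.
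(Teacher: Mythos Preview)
Your proposal is correct and follows essentially the same route as the paper. The paper packages the argument via an intermediate Theorem \ref{mtmany}(\ref{mtdense}) and works directly with $\ell-GalL_{pos}$ (through Corollary \ref{ellGalLfiniteunion}), while you go through $HL_{pos}$ and Theorem \ref{lociintro}; but since Corollary \ref{ellGalLfiniteunion} is itself deduced from Theorem \ref{loci}, this is only a cosmetic difference.

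Two small points worth tightening. First, the implication ``$Z^{\ell}=S \Rightarrow G_{\ell,x}=G_{\ell,S}$'' silently uses that $G_{\ell,x}=G_{\ell,Z}$ for $Z$ the $\bar{\mathbb{Q}}$-Zariski closure of $x$; this is exactly the content of the paper's Proposition on independence of the field of definition (the statement that $G_{\ell,Z}=G_{\ell,Y}$ for $Y$ the smallest $\bar K$-subvariety containing $Z$), and you should cite it. The paper's version of this step is phrased contrapositively: a non--$\ell$-Galois-generic point of $U$ would, by the Torelli property, be an $\ell$-Galois special \emph{point}, hence defined over $\bar{\mathbb{Q}}$ (Corollary \ref{fodgalsp}). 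Second, your closing remark about $(n,d)=(4,5)$ is on target (indeed $h^{n,0}=\binom{d-1}{n+1}=0$ there, so the level drops below three), but note that the infinitesimal Torelli input you need holds throughout the range $n\ge 3$, $d\ge 5$ and is not the reason for excluding $(4,5)$; only the level condition from \cite{BKU} forces that exclusion. Also, the detour through $G_x^{AH}=G_S^{AH}$ is unnecessary: once $G_{\ell,x}=G_{\ell,S}=G_S^{AH}\otimes\mathbb{Q}_\ell$, the chain $G_{\ell,x}\subset G_x^{AH}\otimes\mathbb{Q}_\ell\subset G_S^{AH}\otimes\mathbb{Q}_\ell$ already forces the desired equality.
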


Finally, we describe the relation of the concept of $\ell$-Galois special subvarieties to the Mumford-Tate conjecture for abelian varieties.
For families of abelian varieties, Deligne has shown in (\cite{Deligne}, Theorem 2.11) that Conjecture \ref{conjdel} holds for all subvarieties $Z \subset S$. In this case, all special subvarieties are absolutely special and the Mumford-Tate conjecture is equivalent to the absolute Mumford-Tate conjecture.
In contrast to this, the Mumford-Tate conjecture for abelian varieties, while proven in a number of cases, is still open in general.
The existence of CM points in special subvarieties together with the fact that the Mumford-Tate conjecture is known for CM abelian varieties (\cite{CM}) shows that in this case, Conjecture \ref{everyGalspabssp} is in fact equivalent to Conjecture \ref{conjtate}, and thus to the Mumford-Tate conjecture.
We use monodromy arguments to show that Conjecture \ref{everyGalspabssp} can be reduced to the zero-dimensional case, i.e. to $\ell$-Galois special points.
\begin{samepage}
\begin{thm}[see Theorem \ref{mtequivalent}]\label{intromtequivalent}
The following are equivalent:

\begin{enumerate}[(i)]
\item
The Mumford-Tate conjecture \ref{mtconjecture} holds for all (principally polarized) abelian varieties of dimension $g$;
\item
Every $\ell$-Galois special subvariety of $\mathcal{A}_g$ is special;
\item
Every $\ell$-Galois special point $x \in \mathcal{A}_g(\bar{\mathbb{Q}})$ is a CM point.
\end{enumerate}
\end{thm}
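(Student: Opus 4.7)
The plan is to close the cycle (i) $\Rightarrow$ (iii) $\Rightarrow$ (ii) $\Rightarrow$ (i). Throughout I use two tools: Deligne's theorem that Hodge classes on abelian varieties are absolute Hodge, which gives $G_W = G_W^{AH}$ for every subvariety $W \subset \mathcal{A}_g$ (so \emph{special} and \emph{absolutely special} coincide in $\mathcal{A}_g$) together with the inclusion $G_{\ell, W} \subseteq G_W \otimes \mathbb{Q}_\ell$; and the known Mumford-Tate conjecture for CM abelian varieties, which are precisely the special points of $\mathcal{A}_g$.

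For (i) $\Rightarrow$ (iii): an $\ell$-Galois special point $x \in \mathcal{A}_g(\bar{\mathbb{Q}})$ lies in the smallest special subvariety $Z$ through it, in which $x$ is Hodge-generic and $G_x = G_Z$. Applying MT to a fiber that is simultaneously Hodge- and $\ell$-Galois-generic in $Z$ yields $G_{\ell, Z} = G_Z \otimes \mathbb{Q}_\ell$, and combined with MT at $x$ this gives $G_{\ell, x} = G_x \otimes \mathbb{Q}_\ell = G_Z \otimes \mathbb{Q}_\ell = G_{\ell, Z}$; maximality of $x$ then forces $Z = \{x\}$, so $x$ is special, hence CM.

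For (ii) $\Rightarrow$ (i): given an abelian variety $A$ of dimension $g$, I use Theorem \ref{introGalLcountable} to pick an $\ell$-Galois special $Z \ni [A]$ in which $[A]$ is $\ell$-Galois generic, so $G_{\ell, A} = G_{\ell, Z}$. By (ii), $Z$ is special; its algebraic monodromy equals $G_Z^{\mathrm{der}}$ (standard for Shimura subvarieties), and every CM point $c$ in $Z$ (dense in $Z$) satisfies MT with $G_c$ a maximal $\mathbb{Q}$-torus containing the center of $G_Z$. Assembling these gives $G_Z \otimes \mathbb{Q}_\ell \subseteq G_{\ell, Z}$, and Deligne's theorem provides the reverse inclusion. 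Combined with $G_{\ell, A} = G_{\ell, Z}$ and $G_{\ell, A} \subseteq G_A \otimes \mathbb{Q}_\ell \subseteq G_Z \otimes \mathbb{Q}_\ell$, this forces $G_A = G_Z$ and proves MT for $A$.

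For (iii) $\Rightarrow$ (ii), the main content: let $Z \subset \mathcal{A}_g$ be $\ell$-Galois special. If $Z$ is weakly non-factor in $\mathcal{A}_g$, Theorem \ref{mainthm} directly implies $Z$ is absolutely special, hence special. If not, fix $Y \supsetneq Z$ maximal with $H_Z$ a strict normal subgroup of $H_Y$; by Deligne semisimplicity, normality of $H_Z$ in $H_Y$ produces (up to isogeny) a splitting $\mathbb{V}|_Y \sim \mathbb{V}_1 \oplus \mathbb{V}_2$ on which $H_Z$ acts trivially on $\mathbb{V}_2$, so the universal abelian variety is fiberwise isogenous to $A_{1,y} \times A_{2,y}$ with $A_{2,y}$ isogenous to a fixed $A_2^0$ for $y \in Z$. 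I would then argue that $A_2^0$ is an $\ell$-Galois special point to which (iii) (possibly combined with induction on $g$, the base case $g=1$ being classical) applies, so $A_2^0$ is CM, while the residual variation $\mathbb{V}_1$ makes the image of $Z$ weakly non-factor by the maximality of $Y$, so Theorem \ref{mainthm} applies there. Reassembling the two factors shows $Z$ is absolutely special, hence special. The main obstacle is making this decomposition and reassembly precise within the Shimura structure of $\mathcal{A}_g$: identifying $A_2^0$ as a point to which (iii) is applicable and checking that the $\mathbb{V}_1$-factor satisfies the weakly non-factor hypothesis of Theorem \ref{mainthm}.
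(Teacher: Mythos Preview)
Your implications (i) $\Rightarrow$ (iii) and (ii) $\Rightarrow$ (i) are correct and essentially match the paper (the paper cites \cite{Moonen}, Cor.~4.3.15 for the step you spell out by hand: once $Z$ is special and contains a CM point, the Mumford-Tate conjecture at that CM point propagates to every $\ell$-Galois generic point of $Z$).

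The gap is in (iii) $\Rightarrow$ (ii). Your claimed splitting $\mathbb{V}|_Y \simeq \mathbb{V}_1 \oplus \mathbb{V}_2$ with $H_Z$ acting trivially on $\mathbb{V}_2$ and a constant isogeny factor $A_2^0$ over $Z$ does not follow from $H_Z \trianglelefteq H_Y$. Writing the semisimple group $H_Y$ as an almost-direct product $H_Z \cdot H'$, an irreducible summand of $\mathbb{V}$ may well be of the form $W_1 \boxtimes W_2$ with both tensor factors nontrivial, so neither $H_Z$ nor $H'$ acts trivially on it; there is no reason for the universal abelian scheme to acquire a fixed isogeny factor along $Z$. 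Even granting a decomposition, the claim that the $\mathbb{V}_1$-piece is weakly non-factor (so that Theorem~\ref{mainthm} applies) is unsupported, and (iii) as stated concerns $\mathcal{A}_g$ rather than a smaller $\mathcal{A}_{g'}$, so an induction would have to be set up and justified. You yourself flag these as obstacles; they are genuine, and the route through Theorem~\ref{mainthm} is not how the paper proceeds.

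The paper's argument for (iii) $\Rightarrow$ (ii) bypasses all of this. Since $Z$ is $\ell$-Galois special it is weakly special (Lemma~\ref{sabGsws}), and the Shimura-theoretic description of weakly special subvarieties (Example~\ref{shimura}) writes $Z = \pi^{-1}(\{y\})$ for the projection $\pi: Sh_{G_Z} \to Sh_{G_Z/H_Z}$ and a single point $y$. One then checks directly that $y$ is an $\ell$-Galois special point: if $Y \supsetneq \{y\}$ had $G_{\ell,Y} = G_{\ell,y}$, then $W := \pi^{-1}(Y) \supsetneq Z$ would satisfy $H_{\ell,W} \subset G_{\ell,Z}$ and hence $G_{\ell,W} = G_{\ell,Z}$ by Lemma~\ref{lem}, contradicting the $\ell$-Galois specialness of $Z$. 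Then (iii) forces $y$ to be CM, and $Z = \pi^{-1}(\{y\})$ is special. No induction, no appeal to Theorem~\ref{mainthm}, and no splitting of the representation are needed: the ``constant factor'' you are trying to isolate is precisely the point $y$ in the quotient Shimura variety, and working there rather than with an isogeny decomposition of the abelian scheme is what makes the argument go through.
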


As an application of Theorem \ref{mainthm} in the case of Shimura varieties we prove the following: \end{samepage}
\begin{cor}[see Corollary \ref{mGs}]
Let $Sh$ be a connected component of a Shimura variety of Hodge type such that $G_{Sh}^{der}$ is simple.
Let $Z \subset Sh$ be a strict $\ell$-Galois special subvariety which is positive dimensional and maximal for these properties.
Then
$G_{\ell, Z} = G_Z \otimes \mathbb{Q}_{\ell}$ and any $\ell$-Galois generic point of $Z$ satisfies the Mumford-Tate conjecture.
\end{cor}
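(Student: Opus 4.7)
The plan is to combine Theorem \ref{lociintro} (which itself rests on Theorem \ref{mainthm}) with the Mumford-Tate conjecture for CM abelian varieties. First I would upgrade $Z$ to an honest special subvariety. Since $G_{Sh}^{der}$ is simple, Theorem \ref{lociintro} gives $HL_{pos} = \ell-GalL_{pos}$. Because the period map of $Sh$ has discrete fibers (being of Hodge type), the positive-dimensional subvariety $Z$ automatically has positive period dimension, so $Z \subset \ell-GalL_{pos} = HL_{pos}$. Irreducibility of $Z$ together with the countability of special subvarieties (Cattani-Deligne-Kaplan) then places $Z$ inside some strict, positive-period-dimension special subvariety $Z' \subset Sh$. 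Because $Sh$ is of Hodge type, Deligne's theorem \cite{Deligne} (i.e.\ Conjecture \ref{conjdel} for all subvarieties of $Sh$) makes $Z'$ absolutely special, and in particular $\ell$-Galois special. Maximality of $Z$ then forces $Z = Z'$, so $Z$ is itself a Shimura subvariety of $Sh$.

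Next I would establish $G_{\ell,Z} = G_Z \otimes \mathbb{Q}_\ell$. With the Shimura datum $(G_Z, X_Z^+)$ now at hand, the algebraic monodromy satisfies $H_Z = G_Z^{der}$ by Borel density for arithmetic subgroups, so $G_Z^{der} \otimes \mathbb{Q}_\ell \subset G_{\ell,Z}$. To recover the toric part I would pick a CM point $z_0 \in Z$ whose Mumford-Tate torus $T := G_{z_0}$ surjects onto the abelianization $G_Z / G_Z^{der}$; such points are dense in $Z$ by the classical density of special points in Shimura varieties. The Mumford-Tate conjecture for CM abelian varieties \cite{CM} then gives $G_{\ell,z_0} = T \otimes \mathbb{Q}_\ell$, and the specialization inclusion $G_{\ell,z_0} \subset G_{\ell,Z}$ (coming from $\mathrm{Gal}(\bar{K}/K(z_0)) \to \pi_1^{\acute{e}t}(Z_{K(z_0)})$) yields $T \otimes \mathbb{Q}_\ell \subset G_{\ell,Z}$. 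Combining, $G_{\ell,Z} \supset (T \cdot G_Z^{der}) \otimes \mathbb{Q}_\ell = G_Z \otimes \mathbb{Q}_\ell$, while the reverse inclusion $G_{\ell,Z} \subset G_Z^{AH} \otimes \mathbb{Q}_\ell = G_Z \otimes \mathbb{Q}_\ell$ is automatic (second equality by Deligne).

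For the second assertion, any $\ell$-Galois generic point $z \in Z$ satisfies $G_{\ell,z} = G_{\ell,Z} = G_Z \otimes \mathbb{Q}_\ell$, and the chain $G_{\ell,z} \subset G_z \otimes \mathbb{Q}_\ell \subset G_Z \otimes \mathbb{Q}_\ell$ collapses to equalities, establishing the Mumford-Tate conjecture at $z$ (and incidentally showing that $z$ is Hodge generic in $Z$). I expect the most delicate step to be the selection of a CM point with the required surjectivity onto $G_Z/G_Z^{der}$: this should reduce to analyzing the cocharacter attached to the Shimura subdatum of $Z$, but needs some bookkeeping when $G_Z/G_Z^{der}$ has rank greater than one.
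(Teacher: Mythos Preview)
Your proposal is correct and follows essentially the same route as the paper, which is terser: it invokes Corollary~\ref{Corrolary} directly to conclude that $Z$ is absolutely special (hence a special Shimura subvariety), and then appeals to the existence of CM points together with (\cite{Moonen}, Cor.~4.3.15) to obtain $G_{\ell,Z} = G_Z \otimes \mathbb{Q}_\ell$. Your detour through $HL_{pos} = \ell\text{-}GalL_{pos}$ reaches the same first conclusion, and your explicit decomposition of $G_{\ell,Z}$ into a derived part (via monodromy) and a toric part (via a CM point) is precisely what Moonen's cited result encapsulates.

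One remark on the step you flagged as delicate: the surjection $G_{z_0} \twoheadrightarrow G_Z/G_Z^{der}$ in fact holds for \emph{every} point $z_0 \in Z$, so no careful selection among CM points is needed. All Hodge cocharacters $h_z$ for $z \in Z$ are $G_Z(\mathbb{R})$-conjugate and therefore have the same image in $G_Z^{ab}$; this common image generates $G_Z^{ab}$ as a $\mathbb{Q}$-group by the definition of $G_Z$ as a Mumford-Tate group. Since $G_{z_0}$ contains the image of $h_{z_0}$, it automatically surjects onto $G_Z^{ab}$.
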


The definition of $\ell$-Galois special subvarieties is somewhat indirect, as the maximal algebraic subvarieties with a given property.
To phrase more explicitly what we expect about the structure of these subvarieties, we want to give a conjectural description of the complete local ring $\widehat{\mathcal{O}_{Z,x}}$ of an $\ell$-Galois special subvariety $Z$ at a point $x \in Z(\bar{\mathbb{Q}})$, in the special case of a family of abelian varieties $f: \mathcal{X} \to S$ over $\bar{\mathbb{Q}}$.
For special subvarieties, such a local description follows from the theorem of Cattani-Deligne-Kaplan (cf. \cite{CDK}).

The infinitesimal variation of the Hodge filtration around the point $x$ can be described by a formal period map
$$\hat{\Phi}_{x}: \widehat{S_x} \to \widehat{\mathcal{F}_{y}} $$
defined over $\bar{\mathbb{Q}}$ to the completion of a flag variety $\mathcal{F}$ at a point $y$.

Using Fontaine's de Rham comparison isomorphism, we introduce a de Rham incarnation $G_{\ell-dR,Z}$ of the group $G_{\ell,Z}$, which gives a flag variety $\mathcal{F}_{\ell,Z} \subset \mathcal{F}$ defined over $\bar{\mathbb{Q}}_{\ell}$. By construction, the restriction of $\hat{\Phi}_x$ to the closed formal subscheme $\widehat{Z_x}$ factors through the completion $\widehat{(\mathcal{F}_{\ell,Z})_{y}} $ of $\mathcal{F}_{\ell,Z}$ at $y$.

\begin{conj}[see Conjecture \ref{ellCDK}]\label{introellCDK}
If $Z \subset S$ is an $\ell$-Galois special subvariety, then $\widehat{Z_x}$ is an irreducible component of the pullback $\hat{\Phi}_{x}^{-1}(\widehat{(\mathcal{F}_{\ell,Z})_{y}}) $ of $\widehat{(\mathcal{F}_{\ell,Z})_{y}}$ under the period map.
\end{conj}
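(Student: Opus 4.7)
The plan is to decompose the conjecture into the automatic inclusion $\widehat{Z_x} \subset \hat{\Phi}_{x}^{-1}(\widehat{(\mathcal{F}_{\ell,Z})_{y}})$ together with a maximality statement. The inclusion is immediate from the construction of $\mathcal{F}_{\ell,Z}$: in a local flat trivialization of $(\mathcal{V},\nabla)$ around $x$, the Hodge filtration over $Z$ is forced to take values in $\mathcal{F}_{\ell,Z}$ because the arithmetic $\ell$-adic monodromy along $Z$ lies in $G_{\ell,Z}$, and Fontaine's de Rham comparison transports this into the asserted flag-theoretic condition. What requires real proof is the maximality assertion: no formal germ in $\widehat{S_x}$ strictly containing $\widehat{Z_x}$ continues to satisfy this constraint.

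My proposed strategy is the $\ell$-adic mirror of the way Cattani-Deligne-Kaplan is classically used to pin down special subvarieties. Let $\widehat{W}$ be an irreducible component of $\hat{\Phi}_{x}^{-1}(\widehat{(\mathcal{F}_{\ell,Z})_{y}})$ containing $\widehat{Z_x}$. First, algebraize $\widehat{W}$ to a closed irreducible subvariety $W \subset S$ through $x$, defined over a finitely generated extension of the residue field of $x$. Next, interpret the formal factorization $\hat{\Phi}_x(\widehat{W}) \subset \widehat{(\mathcal{F}_{\ell,Z})_{y}}$ via the Fontaine comparison and parallel transport, to conclude that the arithmetic monodromy along $W$ preserves the flag data defining $\mathcal{F}_{\ell,Z}$; this gives $G_{\ell,W} \subset G_{\ell,Z}$, and the opposite inclusion holds since $Z \subset W$. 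The desired equality $W = Z$, hence $\widehat{W} = \widehat{Z_x}$, then follows from the maximality in Definition \ref{introdefGalsp}(ii).

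The main obstacle is the algebraization step. There is no direct $\ell$-adic analog of the transcendental input underlying Cattani-Deligne-Kaplan (the nilpotent and $SL_2$-orbit theorems, or the o-minimal GAGA used in more recent proofs). Theorem \ref{introGalLcountable} provides global algebraicity of the $\ell$-Galois exceptional locus, but it is a priori too coarse to pin down individual formal components of the pullback at a point. A plausible line of attack is to combine Bost's algebraization criteria for smooth formal subschemes carrying an arithmetic structure with the Galois-equivariant filtration on the de Rham bundle supplied by Fontaine; alternatively, one might hope to construct a rigid-analytic $\ell$-adic period map and prove algebraicity of its flag-theoretic pullbacks in that framework. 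Either route appears to require substantial new ingredients beyond the monodromy methods exploited in Theorem \ref{mainthm}, which is consistent with the fact that the statement is put forward only as a conjecture.
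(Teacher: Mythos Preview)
The statement under discussion is a \emph{conjecture}: the paper does not prove it. The paper establishes only the easy half, namely that the restriction of $\hat{\Phi}_x$ to $\widehat{Z_x}$ factors through $\widehat{(\mathcal{F}_{\ell,Z})_y}$ (this is the discussion immediately preceding Conjecture~\ref{ellCDK}, culminating in the functor~(\ref{Gstructure})). For the full statement, the paper offers no direct argument at all; it merely records, in the paragraph following Conjecture~\ref{ellCDK}, that the conjecture is a consequence of the Mumford-Tate conjecture: assuming Mumford-Tate, every $\ell$-Galois special $Z$ is special with $G_{\ell,Z}=G_Z\otimes\mathbb{Q}_\ell$, Blasius's result (Lemma~\ref{groupcomp}) then identifies $G_{\ell-dR,Z}\otimes\mathbb{C}$ with $G_Z\otimes\mathbb{C}$, and one is reduced to the classical Corollary~\ref{locsp} coming from Cattani--Deligne--Kaplan. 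So there is no ``paper's own proof'' to compare against beyond this conditional reduction.

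Your analysis of the inclusion direction is correct and matches the paper. Your outline for the maximality direction, however, has a second gap that you do not flag. You propose, after algebraizing $\widehat{W}$ to $W$, to deduce $G_{\ell,W}\subset G_{\ell,Z}$ from the formal factorization $\hat{\Phi}_x(\widehat{W})\subset\widehat{(\mathcal{F}_{\ell,Z})_y}$ ``via the Fontaine comparison and parallel transport''. But the formal factorization is a statement on the de Rham side: it says that the horizontal extensions of the tensors $t_{\alpha,\ell-dR,x}$ remain in $F^0$ along $\widehat{W}$. To conclude $G_{\ell,W}\subset G_{\ell,Z}$ you would need the corresponding \'etale tensors $t_{\alpha}$ to be invariant under the arithmetic monodromy of $W$, i.e.\ under $\pi_1^{\acute{e}t}(W_L,\bar{x})$. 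The Fontaine comparison does not give you this direction for free: knowing that a horizontal de Rham tensor sits in $F^0$ does not, in general, force the matching \'etale tensor to be Galois-invariant. That implication is precisely of Mumford-Tate type and is part of what is being conjectured, not an input. So even granting a perfect algebraization theorem, your second step would still be circular. This is why the paper's only route to the conjecture passes through assuming Mumford-Tate outright, and conversely why Theorem~\ref{ellCDKMT} shows the conjecture implies Mumford-Tate: the two are genuinely intertwined, and neither the algebraization nor the ``flag-to-monodromy'' step can currently be isolated as the sole obstacle.
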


\begin{rem}
Intuitively, the conjecture says that locally around $x$, the subvariety $Z$ is defined by the condition that the de Rham tensors fixed by $G_{\ell-dR,Z}$ stay in the zeroth step of the Hodge filtration.
In other words, we expect that the locus in $\widehat{S_x}$ cut out by the Tate tensors defined by $G_{\ell,Z}$, a priori just a closed formal subvariety, is in fact the germ of an \emph{algebraic} subvariety of $S$, namely the $\ell$-Galois special subvariety $Z$.
\end{rem}

Note that this conjecture is implied by the Mumford-Tate conjecture, using the mentioned local description of special subvarieties. We prove the converse:

\begin{thm}[see Theorem \ref{ellCDKMT}]\label{introellCDKMT}
Conjecture \ref{introellCDK} for $S= \mathcal{A}_g$ implies the Mumford-Tate conjecture for principally polarized abelian varieties of dimension $g$.
\end{thm}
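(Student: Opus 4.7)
The plan is to verify condition (iii) of Theorem~\ref{intromtequivalent}, namely that every $\ell$-Galois special point $x \in \mathcal{A}_g(\bar{\mathbb{Q}})$ is a CM point, which by that theorem gives the Mumford-Tate conjecture for principally polarized abelian varieties of dimension $g$. The decisive geometric feature of $\mathcal{A}_g$ is that, via the Siegel uniformization, the Siegel upper half space embeds as an open subset of the Lagrangian Grassmannian $\mathcal{F}$; consequently the formal period map $\hat{\Phi}_x \colon \widehat{(\mathcal{A}_g)_x} \to \widehat{\mathcal{F}_y}$ is a formal isomorphism for any $x \in \mathcal{A}_g(\bar{\mathbb{Q}})$.

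Let $x$ be $\ell$-Galois special and apply Conjecture~\ref{introellCDK} to $Z = \{x\}$: the reduced point $\{x\}$ is an irreducible component of $\hat{\Phi}_x^{-1}(\widehat{(\mathcal{F}_{\ell,x})_y})$. Transporting along the formal isomorphism $\hat{\Phi}_x$, the point $\{y\}$ is an irreducible component of $\widehat{(\mathcal{F}_{\ell,x})_y}$. But $\mathcal{F}_{\ell,x}$ is a $G_{\ell-dR,x}$-orbit through $y$, hence smooth and irreducible there, so its formal completion is irreducible and must equal $\{y\}$. Therefore $\mathcal{F}_{\ell,x} = \{y\}$; equivalently, the connected reductive $\bar{\mathbb{Q}}_\ell$-group $G_{\ell-dR,x}$ stabilizes the Hodge filtration at $x$ and is contained in the Levi subgroup $L_y = Z_{\mathrm{Sp}(V_{dR})}(\mu)$, where $\mu$ denotes the Hodge cocharacter.

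It now suffices to show $G_{\ell,x}$ is a torus, since by Serre's theorem an abelian variety with torus $\ell$-adic algebraic Galois group is of CM type, and \cite{CM} then gives the Mumford-Tate conjecture for $x$. To deduce the torus property, one combines two classical inputs on $\ell$-adic Galois representations of abelian varieties: first, by Fontaine-Faltings, $\mu$ is itself a cocharacter of $G_{\ell-dR,x}$, realizing the Hodge-Tate decomposition of $V_\ell \otimes \mathbb{C}_\ell$; second, by results of Bogomolov and Pink, $G_{\ell-dR,x}$ is generated as an algebraic group by the Galois conjugates of $\mu$ coming from the crystalline Frobenii at primes of good reduction. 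The containment $G_{\ell-dR,x} \subset Z_{\mathrm{Sp}}(\mu)$ makes $\mu$ central in $G_{\ell-dR,x}$, so each of its Galois conjugates lying in $G_{\ell-dR,x}$ commutes with $\mu$; exploiting the minuscule structure of $\mu$ on $V$ (eigenvalues $0$ and $1$) this collapses the derived group of $G_{\ell-dR,x}$ to the trivial group, so $G_{\ell-dR,x}$ is abelian.

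The main obstacle in this plan is precisely the last step: passing from $G_{\ell-dR,x} \subset L_y$ to the statement that $G_{\ell-dR,x}$ is a torus is not formal, since arbitrary connected reductive subgroups of a Levi need not be abelian. What makes the argument go through is the ``motivic'' rigidity of the $\ell$-adic Galois group of an abelian variety, encoded in the Bogomolov-Pink description of $G_{\ell,x}$ as being generated by Hodge cocharacters, combined with the $p$-adic Hodge-theoretic fact that $\mu$ itself lies in $G_{\ell-dR,x}$.
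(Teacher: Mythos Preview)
Your overall strategy matches the paper's: reduce to showing every $\ell$-Galois special point $x$ is CM via Theorem~\ref{intromtequivalent}, use that the formal period map at $x$ is an isomorphism to deduce $\dim \mathcal{F}_{\ell,x}=0$, and hence that the Hodge cocharacter $\mu$ is \emph{central} in $G_{\ell-dR,x}$. Up to here the arguments coincide.

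The divergence, and the gap, is in your final step. You want to conclude that $G_{\ell,x}$ is a torus by invoking a ``Bogomolov--Pink'' statement that $G_{\ell-dR,x}$ is generated by Galois conjugates of $\mu$, and then argue that centrality of $\mu$ forces all these conjugates to commute. Neither ingredient is made precise. Pink's results produce cocharacters attached to places $v\mid\ell$ of $K$ (via Hodge--Tate decompositions), but these are not literally Galois conjugates of a single $\mu$ inside $G_{\ell-dR,x}$, and centrality of $\mu$ at one place does not obviously propagate to the cocharacters coming from the other places. The sentence about ``exploiting the minuscule structure to collapse the derived group'' is likewise not an argument: a connected reductive subgroup of the Levi $L_y$ can perfectly well be non-abelian, so something specific to the $\ell$-adic Galois group of an abelian variety must be used, and you have not said what. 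You flag this yourself as the ``main obstacle'', and as written it is not overcome.

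The paper closes this gap by a different and cleaner route. Instead of trying to prove $G_{\ell,x}$ is a torus, it shows directly that $G_x$ is a torus. The key external input is the theorem (Vasiu, Ullmo--Yafaev) that the Mumford--Tate conjecture holds for the connected centers: $Z(G_x)^{\circ}\otimes\mathbb{Q}_\ell = Z(G_{\ell,x})^{\circ}$. Combined with Blasius's compatibility of de Rham components under the $\ell$-adic and complex comparison isomorphisms, this yields $Z(G_{dR,x})^{\circ}\otimes_K K_v = Z(G_{\ell-dR,x})^{\circ}$. Since $\mu$ is central in $G_{\ell-dR,x}$, it therefore factors through $Z(G_{dR,x})\otimes_K K_v$. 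An elementary lemma then shows that any cocharacter splitting the Hodge filtration and landing in a reductive $\mathbb{Q}$-subgroup $G\subset GL(V_B)$ forces $G_x\subset G$; applied to $G=Z(G_x)$ this gives $G_x=Z(G_x)$, so $x$ is CM. In short: the missing idea is to transfer centrality of $\mu$ from $G_{\ell-dR,x}$ to $G_{dR,x}$ via the known Mumford--Tate conjecture for centers, rather than to attempt a direct structural analysis of $G_{\ell,x}$.
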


The proof uses that Conjecture \ref{introellCDK} allows us to identify the dimension of an $\ell$-Galois special subvariety $Z \subset \mathcal{A}_g$ with the dimension of the flag variety $\mathcal{F}_{\ell,Z}$, and that one can show that the dimension of this flag variety is zero exactly if the group $G_{\ell,Z}$ is a torus.
The Mumford-Tate conjecture for abelian varieties then follows using Theorem \ref{intromtequivalent}.

\begin{notations}

We assume all varieties in this paper to be reduced.
For $S$ an irreducible variety over a field $K \subset \mathbb{C}$, by a subvariety $Z \subset S$ we mean a closed irreducible complex subvariety.
If $Z$ is defined over an extension $L $ of $K$, we denote the associated $L$-variety by $Z_L$.
If a distinction is necessary, we will denote by $Z_{\mathbb{C}} = Z_L \otimes_L \mathbb{C}$ the corresponding complex variety.
Throughout this paper, by a Hodge cycle we mean a cycle of type $(0,0)$.
\end{notations}
\begin{ac}
The author would like to thank Bruno Klingler for many helpful comments and discussions.
\end{ac}

\section{Families of varieties and their realizations}

Let $f: \mathcal{X} \to S$ be a smooth projective morphism between smooth irreducible quasi-projective complex algebraic varieties.
In this section we recall the tools from Hodge theory, absolute Hodge cycles and $\ell$-adic local systems that we will use for studying the cohomology of this family of varieties.

\subsection{Variation of Hodge structure}

After fixing a degree $k \ge 0$, the Hodge theoretic realization of the family $f$ is a (pure, polarizable) $\mathbb{Z}$-variation of Hodge structure $(\mathbb{V}_{\mathbb{Z}}, \mathcal{V}, \nabla, F^{\bullet}) $ on $S$, where $\mathbb{V}_{\mathbb{Z}} = R_{prim}^kf^{an}_{*} \underline{\mathbb{Z}}$ is the local system attached to the primitive $k$-th cohomology,
$(\mathcal{V} = R_{prim}^kf_{*} \Omega_{X/S}, \nabla)$ the associated algebraic vector bundle with flat connection and $F^{\bullet}$ is the Hodge filtration of the vector bundle $\mathcal{V}$.
We denote the associated $\mathbb{Q}$-variation of Hodge structure by $\mathbb{V}:= \mathbb{V}_{\mathbb{Z}} \otimes_{\mathbb{Z}} \mathbb{Q}$.

For every point $s \in S(\mathbb{C})$, the fiber $\mathbb{V}_s$ is a polarizable $\mathbb{Q}$-Hodge structure.
Given $m,n \ge 0$, we define the tensor Hodge structure
$\mathbb{V}_{s}^{\otimes (m,n)} := \mathbb{V}_s^{\otimes m} \otimes \mathbb{V}_s^{\vee \otimes n}$ and $\mathbb{V}_s^{\otimes} := \bigoplus_{m,n \ge 0} \mathbb{V}_s^{\otimes (m,n)}$.

For an irreducible closed algebraic subvariety $Z \subset S$ we define its generic Mumford-Tate group with respect to the variation $\mathbb{V}$ as the group fixing all generic Hodge tensors over $Z$.
More precisely, after choosing a point $z \in Z(\mathbb{C})$, we let $\mathcal{H}_Z \subset \mathbb{V}_z^{\otimes} $ denote the subspace of $v \in \mathbb{V}_z^{\otimes}$ which extend to a global section of $\mathbb{V}^{\otimes}$ over a finite étale cover $Z'$ of $Z$ such that the fiber of this global section is a Hodge class at every point $t \in Z'(\mathbb{C})$.

\begin{mydef}
We define the \emph{generic Mumford-Tate group} $G_Z$ of $Z$ to be
the subgroup of $GL(\mathbb{V}_z)$ fixing the tensors in $\mathcal{H}_Z$.
\end{mydef}

It is well-known that $G_Z$ is a connected reductive group over $\mathbb{Q}$.
We will often use the following formulation of the theorem of the fixed part (\cite{Schmid}, Corollary 7.23):

\begin{thm}\label{fixedpartnonsm}
Let $\xi$ be a global section of $\mathbb{V}^{\otimes (m,n)}$ over a finite étale cover $Z'$ of a closed irreducible subvariety $Z$ of $S$ such that $\xi_s$ is a Hodge cycle for some point $s \in Z'(\mathbb{C})$. Then $\xi_t$ is Hodge for every point $t \in Z'(\mathbb{C})$.
\end{thm}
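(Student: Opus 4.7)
The plan is to reduce to the classical theorem of the fixed part for smooth bases by pulling back along a resolution of singularities of $Z'$. The statement permits $Z$ (and hence $Z'$) to be singular, whereas Schmid's original result (\cite{Schmid}, Corollary 7.23) is stated for smooth base varieties, so a resolution step is essentially forced.

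First I would pick, by Hironaka, a projective birational morphism $\sigma : \tilde{Z} \to Z'$ with $\tilde{Z}$ smooth quasi-projective. Composing with $Z' \to Z \hookrightarrow S$ gives a morphism from the smooth variety $\tilde{Z}$ to $S$, so the polarizable $\mathbb{Q}$-variation of Hodge structure $\mathbb{V}^{\otimes(m,n)}$ pulls back to a polarizable $\mathbb{Q}$-VHS $\sigma^* \mathbb{V}^{\otimes(m,n)}$ on $\tilde{Z}$ whose underlying local system carries the flat global section $\sigma^*\xi$. Choose any preimage $\tilde{s} \in \sigma^{-1}(s)$; under the canonical identification of $\mathbb{Q}$-Hodge structures $(\sigma^*\mathbb{V}^{\otimes(m,n)})_{\tilde{s}} \cong \mathbb{V}_s^{\otimes(m,n)}$, the class $(\sigma^*\xi)_{\tilde{s}}$ corresponds to $\xi_s$, which is Hodge by assumption. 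The smooth-base theorem of the fixed part applied on $\tilde{Z}$ then shows that $\sigma^*\xi$ is a Hodge class in every fiber over $\tilde{Z}$.

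To conclude, for any $t \in Z'(\mathbb{C})$ choose $\tilde{t} \in \sigma^{-1}(t)$, which exists by the surjectivity of $\sigma$. Under the canonical identification $(\sigma^*\mathbb{V}^{\otimes(m,n)})_{\tilde{t}} \cong \mathbb{V}_t^{\otimes(m,n)}$, the class $(\sigma^*\xi)_{\tilde{t}}$ corresponds to $\xi_t$, so $\xi_t$ is Hodge as required.

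There is no real obstacle here; the only detail needing checking is functoriality of polarizable $\mathbb{Q}$-variations of Hodge structure under pullback along morphisms of smooth complex varieties, together with the fact that the induced maps on fibers are isomorphisms of $\mathbb{Q}$-Hodge structures. An alternative route avoiding Hironaka would work over the smooth locus $U \subset Z'$ (via the classical theorem of the fixed part) and then extend to all of $Z'$ by invoking the closedness of the condition ``$\xi \in F^0$'' inside the ambient filtered vector bundle pulled back from $S$, but the resolution approach is the most direct.
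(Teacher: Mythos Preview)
Your argument via resolution of singularities is correct. The only implicit assumption worth flagging is that $\tilde{Z}$ be connected so that the theorem of the fixed part on the smooth base actually propagates the Hodge condition from $\tilde{s}$ to every $\tilde{t}$; this holds because a resolution of an irreducible variety is irreducible, and one may assume $Z'$ is connected (hence irreducible, being finite \'etale over the irreducible $Z$) without loss of generality.

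The paper does not spell out a proof but indicates a different reduction: it says one reduces to the smooth case ``using the period map'' and cites (\cite{Abssppaper}, Theorem~1.14). The idea there is closer to the alternative you sketch at the end: one first applies Schmid's result on the smooth locus $Z'^{sm}$, and then observes that the condition ``$\xi_t$ is Hodge'' depends only on the image of $t$ under the period map $\Phi$ and is closed in $Z'$ (the Hodge filtration extends as a holomorphic subbundle pulled back from $S$), so it propagates from the dense open $Z'^{sm}$ to all of $Z'$. Your resolution approach is arguably more direct and avoids any discussion of the period map, at the cost of invoking Hironaka; the paper's route stays entirely within the Hodge-theoretic setup and has the minor advantage that the same period-map machinery is reused later (e.g.\ for the analogous reduction in Theorem~\ref{principlebnonsm} and in comparing $H_Z$ with $H_{Z^{sm}}$). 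Either way the content is the same reduction to Schmid's theorem on a smooth base.
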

Note that the theorem above is usually just stated for a smooth base $Z$, but one can reduce to that case using the period map (cf. \cite{Abssppaper}, Theorem 1.14).

\subsection{Absolute Hodge cycles}

In this section we recall Deligne's notion of absolute Hodge cycle (cf. \cite{Deligne}).

Let $X$ be a smooth projective algebraic variety over $\mathbb{C}$. For any automorphism $\sigma \in \mathrm{Aut}(\mathbb{C}/\mathbb{Q})$, we define the conjugated variety $X^{\sigma}:= X \times_{\mathbb{C}, \sigma} \operatorname{Spec} \mathbb{C}$.

The natural map $X^{\sigma} \to X$ induces isomorphisms
$$\sigma_{dR}^*: H_{dR}^{2n}(X /\mathbb{C})(n) \overset{\sim}{\to} H_{dR}^{2n}(X^{\sigma} /\mathbb{C})(n)$$
and
$$ \sigma_{\acute{e}t}^{*}: H^{2n}_{\acute{e}t}(X, \mathbb{A}_{f})(n) \overset{\sim}{\to} H^{2n}_{\acute{e}t}(X^{\sigma}, \mathbb{A}_{f})(n). $$

Using the natural de Rham and étale comparison isomorphisms, a Hodge class $v \in H^{2n}_B(X, \mathbb{Q})(n)$ gives rise to a de Rham component $v_{dR} \in H_{dR}^{2n}(X /\mathbb{C})(n)$ and an étale component $v_{\acute{e}t} \in H^{2n}_{\acute{e}t}(X, \mathbb{A}_{f})(n)$.

\begin{mydef}[\cite{Deligne}, §2]
A Hodge class $v \in H^{2n}_B(X, \mathbb{Q})(n)$ is called \emph{absolute Hodge} if for every $\sigma \in \mathrm{Aut}(\mathbb{C}/\mathbb{Q})$, the conjugated de Rham and étale components
$$ \sigma_{dR}^{*}(v_{dR}) \in H^{2n}_{dR}(X^{\sigma} / \mathbb{C})(n) $$ and
$$ \sigma_{\acute{e}t}^{*}(v_{\acute{e}t}) \in H^{2n}_{\acute{e}t}(X^{\sigma}, \mathbb{A}_{f})(n) $$
are the de Rham and étale components of a single Hodge class in $H^{2n}_B(X^{\sigma}, \mathbb{Q})(n)$.
\end{mydef}

Consider a closed irreducible subvariety $Z \subset S$ and a point $z \in Z(\mathbb{C})$.
We define $\mathcal{AH}_Z^{\otimes (m,n)} \subset \mathbb{V}_z^{\otimes (m,n)}$ to be the subset of all $v \in \mathbb{V}_z^{\otimes (m,n)}$ which extend to a global section of $\mathbb{V}^{\otimes (m,n)}$ over a finite étale cover $Z'$ of $Z$ such that the fiber of this global section is an absolute Hodge class at every point $t \in Z'(\mathbb{C})$.
Set $\mathcal{AH}_Z := \bigoplus_{m,n \ge 0} \mathcal{AH}_Z^{\otimes (m,n)}$.
\begin{mydef}We define the \emph{generic absolute Mumford-Tate group} $G_{Z}^{AH}$ to be the subgroup of $GL(\mathbb{V}_z)$ fixing all tensors in $\mathcal{AH}_Z$.
\end{mydef}

Note that the absolute Hodge classes of $ \mathbb{V}_z^{\otimes (m,n)}$ can be interpreted as absolute Hodge classes on some power of the fiber $\mathcal{X}_z$.

\begin{rem}\label{grouppoint}
In the case that $Z=\{z\}$ is a point, the group $G_z^{AH}$ is the motivic Galois group of the motive for absolute Hodge cycles attached to the $k$-th primitive cohomology of the fiber $\mathcal{X}_z$, cf. (\cite{DM}, Proposition 6.22).
Indeed, the motivic Galois group is the (possibly non-connected) reductive group characterized by the fact that it fixes the absolute Hodge tensors in all tensor constructions built out of $H^k_{prim}(\mathcal{X}_z)$ and its dual. Hence we recover exactly the above definition of $G_z^{AH}$.
\end{rem}

Clearly, every absolute Hodge class is a Hodge class. It follows that we have the inclusion $\mathcal{AH}_Z \subset \mathcal{H}_Z$ and therefore $G_Z \subset G_Z^{AH}$.

The analog of the theorem of fixed part for absolute Hodge classes is Deligne's Principle B (cf. \cite{Deligne}, Theorem 2.12):
\begin{thm}\label{principlebnonsm}
Let $\xi$ be a global section of $\mathbb{V}$ over a finite étale cover $Z'$ of a closed irreducible subvariety $Z$ of $S$ such that $\xi_s$ is absolute Hodge for some point $s \in Z'(\mathbb{C})$. Then $\xi_t$ is absolute Hodge for every point $t \in Z'(\mathbb{C})$.
\end{thm}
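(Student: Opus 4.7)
The plan is to reduce the statement to the case where the base is smooth, which is the classical setting of Deligne's Principle B (\cite{Deligne}, Theorem 2.12). Passing to the finite étale cover, we may assume that $\xi$ is a global section of the restriction of $\mathbb{V}$ (or more generally of any tensor construction $\mathbb{V}^{\otimes(m,n)}$) to $Z'$, and that $\xi_s$ is absolute Hodge for some $s \in Z'(\mathbb{C})$. The reduction is exactly parallel to the one sketched for the theorem of the fixed part in Theorem \ref{fixedpartnonsm}.

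First, I would apply Hironaka to obtain a proper surjective morphism $\pi \colon \widetilde{Z}' \to Z'$ with $\widetilde{Z}'$ smooth and irreducible. Pulling back the family $f$ and the VHS along $\pi$ yields a smooth projective morphism $\widetilde{f} \colon \mathcal{X} \times_{Z'} \widetilde{Z}' \to \widetilde{Z}'$ whose associated variation of Hodge structure is $\pi^{*}\mathbb{V}$, and the fiber of $\widetilde{f}$ over any $\widetilde{t} \in \widetilde{Z}'(\mathbb{C})$ is canonically identified with $\mathcal{X}_{\pi(\widetilde{t})}$. In particular, being absolute Hodge is preserved under this pullback: a class on a fiber of $\widetilde{f}$ is absolute Hodge if and only if the corresponding class on the fiber of $f$ over its image is.

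Next, the pulled-back section $\widetilde{\xi} := \pi^{*}\xi$ is a global flat section of $\pi^{*}\mathbb{V}$ on the smooth variety $\widetilde{Z}'$, and by the compatibility just noted, $\widetilde{\xi}_{\widetilde{s}} = \xi_{s}$ is absolute Hodge for every $\widetilde{s} \in \pi^{-1}(s)$. Applying the smooth-base version of Deligne's Principle B to $\widetilde{f}$ and the section $\widetilde{\xi}$ yields that $\widetilde{\xi}_{\widetilde{t}}$ is absolute Hodge for every $\widetilde{t} \in \widetilde{Z}'(\mathbb{C})$. Since $\pi$ is surjective on complex points, any $t \in Z'(\mathbb{C})$ admits a preimage $\widetilde{t}$, and $\xi_{t} = \widetilde{\xi}_{\widetilde{t}}$ is therefore absolute Hodge.

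The only genuine subtlety is verifying the pullback compatibility in the third paragraph above: since absolute Hodgeness is a condition on a class on a single variety (and not on the family as a whole), and fibers are preserved under base change, this is essentially formal. The use of a resolution rather than the period map has the advantage that no further identification of Hodge-theoretic data is required. Everything else is a direct invocation of the classical Principle B.
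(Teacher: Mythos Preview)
Your argument is correct: pulling the family back along a resolution $\pi:\widetilde{Z}'\to Z'$ preserves the fibres, so absolute Hodgeness of a class at a point is unaffected, and Deligne's smooth-base Principle~B on $\widetilde{Z}'$ yields the conclusion via the surjectivity of $\pi$. The only tacit assumption is that $Z'$ (hence $\widetilde{Z}'$) is connected, without which the statement itself fails; this is harmless and implicit in the paper as well.

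The paper's route is slightly different in its packaging. Rather than resolving $Z'$ directly, it first invokes the non-smooth theorem of the fixed part (Theorem~\ref{fixedpartnonsm}), whose own reduction to the smooth case is carried out via the period map, and then refers to \cite{Abssppaper}, Theorem~2.5 for the remaining step. Your approach bypasses both the fixed-part theorem and the period-map machinery entirely: resolution of singularities plus the observation that absolute Hodgeness is a fibrewise notion is all that is needed. This is more self-contained and arguably cleaner; the paper's version has the mild advantage of reusing an argument already set up for Theorem~\ref{fixedpartnonsm}, keeping the two reductions formally parallel.
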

Again, this theorem is usually phrased with $Z$ smooth, but we can reduce to that case using Theorem \ref{fixedpartnonsm} as in (\cite{Abssppaper}, Theorem 2.5).

\subsection{$\ell$-adic realization}

Suppose that $S = S_K \otimes_K \mathbb{C}$ and $\mathcal{X} = \mathcal{X}_K \otimes_K \mathbb{C}$ are defined over a field $K\subset \mathbb{C}$ finitely generated over $\mathbb{Q}$ such that $f$ comes from a family $f_K: \mathcal{X}_K \to S_K$ defined over $K$.
Let $\mathbb{L}$ be the arithmetic étale $\ell$-adic local system on $S_K$ defined by $\mathbb{L}:= R_{prim}^kf_{K,*} \underline{\mathbb{Q}_{\ell}}$.
After fixing a point $s \in S(\bar{K})$, we denote the corresponding monodromy representation of the étale fundamental group by
$$\rho_{\ell}: \pi_1^{\acute{e}t}(S_K,s) \to GL(\mathbb{L}_s).$$
We use the notation $\overline{\mathbb{L}}$ for the $\ell$-adic étale local system on $S_{\bar{K}}$ defined by restricting this monodromy representation to $ \pi_1^{\acute{e}t}(S_{\bar{K}},s) $.
If $Z \subset S$ is an irreducible closed complex algebraic subvariety,
then $Z$ may be defined over a finitely generated field extension $L$ of $K$.
For a point $z \in Z(\bar{L})$, the restriction of the local system $\mathbb{L}$ on $S_K$ to $Z_L$ gives rise to a representation
\begin{equation}\label{rep} \rho_{\ell,Z}: \pi_1^{\acute{e}t}(Z_L,z) \to GL(\mathbb{L}_z). \end{equation}

\begin{lem}
The identity component of the Zariski closure of the image of (\ref{rep}) is independent of the choice of the field $L$.
\end{lem}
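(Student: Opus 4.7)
The plan is to reduce to the case of an inclusion $L \subset L'$ and then show that enlarging the field of definition corresponds, at the level of étale fundamental groups, to passing to a subgroup of finite index; the lemma will then follow from the standard fact that passing to a finite-index subgroup preserves the identity component of the Zariski closure.

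Given two finitely generated extensions $L_1, L_2 \subset \mathbb{C}$ of $K$ over which $Z$ is defined, their compositum $L_1 L_2 \subset \mathbb{C}$ is again a finitely generated extension of $K$ over which $Z$ is defined, so it suffices to treat the case $L \subset L'$. In that case, after choosing compatible base points, I would analyze the natural commutative diagram of homotopy exact sequences
\begin{equation*}
\begin{array}{ccccccccc}
1 & \to & \pi_1^{\acute{e}t}(Z_{\bar{L}}, z) & \to & \pi_1^{\acute{e}t}(Z_{L'}, z) & \to & \mathrm{Gal}(\bar{L'}/L') & \to & 1 \\
 & & \parallel & & \downarrow & & \downarrow & & \\
1 & \to & \pi_1^{\acute{e}t}(Z_{\bar{L}}, z) & \to & \pi_1^{\acute{e}t}(Z_{L}, z) & \to & \mathrm{Gal}(\bar{L}/L) & \to & 1,
\end{array}
\end{equation*}
where the identification of geometric étale fundamental groups uses their invariance under extension of algebraically closed base fields in characteristic zero.

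The heart of the argument is to show that the right-hand vertical map has image of finite index; a diagram chase then gives the same for the middle map. By the standard Galois theory of composita, the image of the restriction map $\mathrm{Gal}(\bar{L'}/L') \to \mathrm{Gal}(\bar{L}/L)$ equals $\mathrm{Gal}(\bar{L}/L' \cap \bar{L})$. The intermediate field $L' \cap \bar{L}$ is algebraic over $L$ (since it is contained in $\bar{L}$) and finitely generated over $L$ (as a subextension of the finitely generated extension $L'/L$, invoking the classical theorem that every subextension of a finitely generated field extension is itself finitely generated), hence finite over $L$. Consequently $\mathrm{Gal}(\bar{L}/L' \cap \bar{L})$ has finite index in $\mathrm{Gal}(\bar{L}/L)$, and so the image of $\pi_1^{\acute{e}t}(Z_{L'}, z)$ in $\pi_1^{\acute{e}t}(Z_{L}, z)$ has finite index.

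To conclude, I would invoke the elementary fact that for any subgroup $\Gamma \subset GL(\mathbb{L}_z)$ and any finite-index subgroup $\Gamma' \subset \Gamma$, the Zariski closures of $\Gamma$ and $\Gamma'$ have the same identity component. Applied to $\Gamma = \rho_{\ell,Z}(\pi_1^{\acute{e}t}(Z_L, z))$ and $\Gamma'$ equal to the image of $\pi_1^{\acute{e}t}(Z_{L'}, z)$, this gives the independence statement. The only mild subtlety lies in the Galois-theoretic computation of the image and the finiteness of $L' \cap \bar{L}/L$; everything else is formal.
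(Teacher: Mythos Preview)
Your proposal is correct and follows essentially the same approach as the paper: both reduce to an inclusion $L \subset L'$, identify $E = L' \cap \bar{L}$ as a finite extension of $L$, and conclude that the image of $\pi_1^{\acute{e}t}(Z_{L'})$ in $\pi_1^{\acute{e}t}(Z_L)$ has finite index, so the Zariski closures share the same identity component. The paper phrases the finite-index step via the factorization $\pi_1^{\acute{e}t}(Z_{L'}) \twoheadrightarrow \pi_1^{\acute{e}t}(Z_E) \subset \pi_1^{\acute{e}t}(Z_L)$ rather than through the homotopy exact sequences, but this is only a cosmetic difference.
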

\begin{proof}
We recall the argument from (\cite{Moonen}, Remark 2.2.2).
Suppose $L'/L$ is a finitely generated field extension, and let $E := L' \cap \bar{L}$. Then $E$ is a finite extension of $L$.
Note that $$\pi_1^{\acute{e}t}(Z_{L'}) \twoheadrightarrow \pi_1^{\acute{e}t}(Z_E) \subset \pi_1^{\acute{e}t}(Z_L).$$
As a consequence, the Zariski closure of the image of $\pi_1^{\acute{e}t}(Z_{L'})$ is a finite index subgroup of the Zariski closure of the image of $\pi_1^{\acute{e}t}(Z_L)$.
\end{proof}

\begin{mydef}
The \emph{generic $\ell$-adic algebraic Galois group} $G_{\ell,Z}$ of $Z$ is the identity component of the Zariski closure in $GL(\mathbb{L}_z)$ of the image of the arithmetic monodromy representation
\begin{equation} \rho_{\ell,Z}: \pi_1^{\acute{e}t}(Z_L,z) \to GL(\mathbb{L}_{z}), \end{equation}
where $L$ is a field of definition of $Z$ which is finitely generated over $K$.
\end{mydef}

Here we use the same terminology as in (\cite{Moonen}, 4.2).
We warn the reader that it is not known in general whether $G_{\ell, Z}$ is a reductive group. This is essentially the statement of the semisimplicity conjecture, which is known only in specific cases, such as families of abelian varieties by the work of Faltings in \cite{Faltings}.

\begin{prop}\label{conjgalsp}
For every $\sigma \in \mathrm{Aut}(\mathbb{C}/K)$ there exists a natural isomorphism $G_{\ell,Z^{\sigma}} \cong G_{\ell,Z}$.
\end{prop}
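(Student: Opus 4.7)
The plan is to prove this by straight functoriality: the identification $G_{\ell, Z^\sigma} \cong G_{\ell, Z}$ will come from transporting the arithmetic monodromy representation of $Z_L$ along the $\sigma$-semilinear isomorphism $Z_L \cong (Z^\sigma)_{\sigma(L)}$.

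First, I would fix a finitely generated extension $L/K$ over which $Z$ is defined, and a geometric point $z \in Z(\bar{L})$. Then $Z^\sigma = Z \times_{\mathbb{C},\sigma} \mathrm{Spec}\,\mathbb{C}$ is defined over $\sigma(L)$, which is again a finitely generated extension of $K$ since $\sigma$ fixes $K$, and $z^\sigma := \sigma(z)$ is a geometric point of $Z^\sigma_{\sigma(L)}$. The automorphism $\sigma$ induces a $\sigma$-semilinear isomorphism of schemes $\tilde{\sigma}: Z_L \overset{\sim}{\to} (Z^\sigma)_{\sigma(L)}$ which fits into a commutative diagram with the inclusions into $S_L$ and $S_{\sigma(L)}$ respectively, because $S$ is defined over $K$ and $\sigma|_K = \mathrm{id}$.

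Next, because the étale fundamental group is functorial under such isomorphisms, $\tilde{\sigma}$ induces an isomorphism
\[
\tilde{\sigma}_*: \pi_1^{\acute{e}t}(Z_L, z) \overset{\sim}{\to} \pi_1^{\acute{e}t}((Z^\sigma)_{\sigma(L)}, z^\sigma).
\]
Since the $\ell$-adic local system $\mathbb{L}$ on $S_K$ is defined over $K$ and $\sigma$ fixes $K$, its pullbacks to $S_L$ and $S_{\sigma(L)}$ are canonically identified via $\sigma$, hence $\tilde{\sigma}$ identifies the restrictions of $\mathbb{L}$ to $Z_L$ and to $(Z^\sigma)_{\sigma(L)}$. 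Taking fibers at $z$ and $z^\sigma$, this furnishes a $\mathbb{Q}_\ell$-linear isomorphism $\mathbb{L}_z \overset{\sim}{\to} \mathbb{L}_{z^\sigma}$ intertwining the two arithmetic monodromy representations, i.e.\ the diagram
\[
\xymatrix{
\pi_1^{\acute{e}t}(Z_L, z) \ar[r]^-{\rho_{\ell, Z}} \ar[d]_{\tilde{\sigma}_*} & GL(\mathbb{L}_z) \ar[d]^{\cong} \\
\pi_1^{\acute{e}t}((Z^\sigma)_{\sigma(L)}, z^\sigma) \ar[r]^-{\rho_{\ell, Z^\sigma}} & GL(\mathbb{L}_{z^\sigma})
}
\]
commutes up to the inner automorphism coming from a choice of path between the basepoints.

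Finally, since the induced isomorphism $GL(\mathbb{L}_z) \overset{\sim}{\to} GL(\mathbb{L}_{z^\sigma})$ is algebraic, it carries the Zariski closure of the image of $\rho_{\ell, Z}$ isomorphically onto the Zariski closure of the image of $\rho_{\ell, Z^\sigma}$, and hence induces an isomorphism on identity components, giving $G_{\ell, Z^\sigma} \cong G_{\ell, Z}$. There is no substantive obstacle here; the only care needed is to keep track of the fact that the identification is $\sigma$-semilinear on the algebra of functions but linear after taking $\ell$-adic monodromy, which is a purely geometric invariant. One should also invoke the lemma just proved to know that the choice of $L$ (and hence of $\sigma(L)$) does not affect $G_{\ell, Z}$ or $G_{\ell, Z^\sigma}$.
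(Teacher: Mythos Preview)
Your proposal is correct and follows essentially the same approach as the paper: both use that $\sigma$ induces an isomorphism of $K$-schemes between $Z_L$ and $(Z^\sigma)_{\sigma(L)}$, which together with the fact that $\mathbb{L}$ is defined over $K$ gives a commutative square intertwining the two arithmetic monodromy representations, whence the Zariski closures of their images (and their identity components) are identified. The paper's argument is slightly more terse and does not explicitly invoke the independence-of-$L$ lemma, but the content is the same.
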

\begin{proof}
If $Z$ is defined over a field $L$ finitely generated over $K$, the subvariety $Z^{\sigma}$ is defined over the finitely generated field extension $\sigma(L)$ of $K$, and $\sigma$ defines an isomorphism of $K$-schemes $Z_{\sigma(L)}^{\sigma} \overset{\sim}{\rightarrow} Z_L$.
The diagram
$$\xymatrix{\pi_1^{\acute{e}t}(Z_{\sigma(L)}^{\sigma}, z^{\sigma}) \ar[r] \ar[d]^{\cong} & \pi_1^{\acute{e}t}(S_K, z^{\sigma}) \ar[r] \ar[d]^{\cong}& GL(\mathbb{L}_{z^{\sigma}}) \ar[d]^{\cong} \\
\pi_1^{\acute{e}t}(Z_L, z) \ar[r] & \pi_1^{\acute{e}t}(S_K, z) \ar[r] & GL(\mathbb{L}_{z})
} $$
commutes because the local system $\mathbb{L}$ is defined over $K$.
Thus under the isomorphism $\mathbb{L}_{z^{\sigma}} \cong \mathbb{L}_{z}$ induced by $\sigma$ the images of the above representations coincide, hence so do their Zariski closures.
\end{proof}

\subsection{Monodromy}

A powerful tool relating the Hodge theoretic and $\ell$-adic realizations is the notion of monodromy.
Let $Z \subset S$ be a closed irreducible subvariety and $z \in Z(\mathbb{C})$.

\begin{mydef}\label{algmon1}
The \emph{algebraic monodromy group} $H_Z$ of $Z$ is defined to be the identity component of the Zariski closure of the image of the monodromy representation
$ \rho_Z: \pi_1(Z_{\mathbb{C}}^{an},z) \to GL(\mathbb{V}_z)$ corresponding to the restriction of $\mathbb{V}$ to $Z$.
\end{mydef}

\begin{rem}\label{notsmooth}
The algebraic monodromy group is often defined by restricting to the smooth locus $Z^{sm}$ of $Z$.
Using Hodge theory, one can show that $H_{Z^{sm}}=H_Z$, i.e. both definitions agree (cf. \cite{Abssppaper}, Lemma 1.9).
\end{rem}

Let $L$ be a field of definition for $Z$ which is finitely generated over $K$.

\begin{mydef}\label{algmon2}
The \emph{geometric $\ell$-adic algebraic monodromy group} $H_{\ell,Z}$ of $Z$ is defined to be the identity component of the
Zariski closure of the image of the monodromy representation
$\overline{\rho}_{\ell, Z}: \pi_1^{\acute{e}t}(Z_{\bar{L}},z) \to GL(\overline{\mathbb{L}}_z)$
attached to the restriction of the $\ell$-adic local system $\overline{\mathbb{L}}$ to $Z_{\bar{L}}$.
\end{mydef}

Note that the common geometric origin of the local systems gives an identification $\mathbb{V} \otimes_{\mathbb{Q}} \mathbb{Q}_{\ell} \cong \nu^{*}\overline{\mathbb{L}}$ under the morphism of topoi $$(\nu^{*}, \nu_{*}): S_{\mathbb{C}}^{an} \to S_{\mathbb{C}, \acute{e}t} \cong S_{\bar{K}, \acute{e}t}.$$
As a consequence, the following proposition relates the algebraic monodromy group and the geometric $\ell$-adic algebraic monodromy group. This also shows that Definition \ref{algmon2} is again independent of the choice of the field $L$.

\begin{prop}[\cite{Moonen}, Lemma 4.3.4]\label{monodromyladic}
The $\ell$-adic comparison isomorphism $\mathbb{V}_z \otimes_{\mathbb{Q}} \mathbb{Q}_{\ell} \cong \overline{\mathbb{L}}_z$
induces an isomorphism
$$ H_{Z} \otimes_{\mathbb{Q}} \mathbb{Q}_{\ell} \cong H_{\ell, Z}.$$
\end{prop}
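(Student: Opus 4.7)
The strategy is to produce a subgroup of $GL(\overline{\mathbb{L}}_z)(\mathbb{Q}_\ell)$ that is simultaneously dense in the image of $\rho_Z \otimes \mathbb{Q}_\ell$ and in the image of $\overline{\rho}_{\ell, Z}$, and then invoke the fact that a Zariski closed subset of an algebraic group over $\mathbb{Q}_\ell$ is automatically $\ell$-adically closed. First, I would assemble the standard comparison between the two flavors of fundamental group: by SGA 1 (Riemann existence theorem / profinite completion) combined with the invariance of the étale fundamental group under extension of algebraically closed base fields in characteristic zero, the natural map $\iota: \pi_1(Z_{\mathbb{C}}^{an}, z) \to \pi_1^{\acute{e}t}(Z_{\bar{L}}, z)$ has dense image in the profinite topology, and in particular in any $\ell$-adic quotient. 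If $Z$ is singular, I would reduce to its smooth locus via Remark \ref{notsmooth} together with its étale analog.

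Second, I would use the identification $\mathbb{V} \otimes_{\mathbb{Q}} \mathbb{Q}_\ell \cong \nu^{*} \overline{\mathbb{L}}$ of local systems recalled just before the statement to upgrade $\iota$ to an equality of monodromy representations: namely, $\overline{\rho}_{\ell, Z} \circ \iota$ coincides, under $\mathbb{V}_z \otimes \mathbb{Q}_\ell \cong \overline{\mathbb{L}}_z$, with the base change of $\rho_Z$ to $\mathbb{Q}_\ell$. Denoting by $A$ and $B$ the images of $\pi_1(Z^{an}, z)$ and $\pi_1^{\acute{e}t}(Z_{\bar{L}}, z)$ respectively inside $GL(\overline{\mathbb{L}}_z)(\mathbb{Q}_\ell)$, the continuity of $\overline{\rho}_{\ell, Z}$ in the $\ell$-adic topology, together with the density of $\iota$, forces $A$ to be $\ell$-adically dense in $B$.

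To conclude, since every Zariski closed subset of $GL(\overline{\mathbb{L}}_z)(\mathbb{Q}_\ell)$ is $\ell$-adically closed, the Zariski closures of $A$ and $B$ in $GL(\overline{\mathbb{L}}_z)$ agree. Moreover, the Zariski closure of the image of $\rho_Z$ in $GL(\mathbb{V}_z)$ is a $\mathbb{Q}$-subgroup that base changes along $\mathbb{Q} \hookrightarrow \mathbb{Q}_\ell$ to the Zariski closure of $A$ in $GL(\mathbb{V}_z \otimes \mathbb{Q}_\ell)$, by compatibility of defining ideals with flat base change. Passing to identity components commutes with field extension for linear algebraic groups, which yields $H_Z \otimes_{\mathbb{Q}} \mathbb{Q}_\ell \cong H_{\ell, Z}$.

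The step I expect to require the most attention is the compatibility asserted in the second paragraph, i.e.\ that the geometric identification of local systems really translates into the claimed equality of monodromy representations without sign or normalization discrepancies; the cleanest way to handle this is through the equivalence of categories between $\mathbb{Q}_\ell$-local systems on $S_{\bar{K}, \acute{e}t}$ and continuous $\mathbb{Q}_\ell$-representations of $\pi_1^{\acute{e}t}(S_{\bar{K}}, z)$ together with its topological counterpart. Once this is in place, the remainder of the argument is essentially formal: $\ell$-adic density versus Zariski closure, flat base change of defining ideals, and stability of identity components under field extension.
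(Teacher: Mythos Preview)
Your proposal is correct and follows essentially the same approach as the paper's proof. Both arguments hinge on the same two ingredients: (1) the image of $\pi_1(Z_{\mathbb{C}}^{an},z)$ is $\ell$-adically dense in the image of the \'etale fundamental group via the profinite completion statement from SGA~1, and (2) Zariski closed subsets of $GL(\overline{\mathbb{L}}_z)(\mathbb{Q}_\ell)$ are $\ell$-adically closed, so the Zariski closures of the two images coincide. The only organizational difference is that the paper factors through $\pi_1^{\acute{e}t}(Z_{\mathbb{C}},z)$ explicitly (using the surjection $\pi_1^{\acute{e}t}(Z_{\mathbb{C}},z) \twoheadrightarrow \pi_1^{\acute{e}t}(Z_{\bar{L}},z)$), whereas you combine this step with the profinite completion into a single density statement; you are also somewhat more explicit about the flat base change of Zariski closures and the passage to identity components, which the paper leaves implicit.
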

\begin{proof}
By definition, $H_{\ell,Z}$ is the identity component of the Zariski closure of the image of the monodromy representation $\overline{\rho}_{\ell,Z}: \pi_1^{\acute{e}t}(Z_{\bar{L}},z) \to GL(\overline{\mathbb{L}}_z)$.
Since the natural map $\pi_1^{\acute{e}t}(Z_{\mathbb{C}},z) \twoheadrightarrow \pi_1^{\acute{e}t}(Z_{\bar{L}},z)$ is surjective, it is also the identity component of the Zariski closure of the monodromy representation $\rho_{\ell, Z_{\mathbb{C}}}: \pi_1^{\acute{e}t}(Z_{\mathbb{C}},z) \to GL(\overline{\mathbb{L}}_z)$.
We now recall the argument given in (\cite{Moonen}, Lemma 4.3.4).
We have a commutative diagram
$$\xymatrix{\pi_1(Z_{\mathbb{C}}^{an},z) \ar[r]^{\rho_Z} \ar[d] & GL(\mathbb{V}_z \otimes \mathbb{Q}_{\ell}) \ar[d]^{\cong} \\
\pi_1^{\acute{e}t}(Z_{\mathbb{C}},z) \ar[r]^{\rho_{\ell,Z_{\mathbb{C}}}} & GL(\mathbb{L}_z) \,\, .
}
$$
It suffices to show that $\rho_{\ell, Z_{\mathbb{C}}}(\pi_1^{\acute{e}t}(Z_{\mathbb{C}},z)) \subset H_Z\otimes \mathbb{Q}_{\ell}$.
The étale fundamental group $\pi_1^{\acute{e}t}(Z_{\mathbb{C}},z)$ is the profinite completion of $\pi_1(Z_{\mathbb{C}}^{an},z)$ (\cite{Grothendieck} Exposé V, Corollary 5.2).
We see that $\rho_Z(\pi_1(Z_{\mathbb{C}}^{an},z))$ is dense in $\rho_{\ell, Z_{\mathbb{C}}}(\pi_1^{\acute{e}t}(Z_{\mathbb{C}},z))$ for the $\ell$-adic topology, and therefore also for the Zariski topology.
\end{proof}

\begin{prop} \label{normal}
\textnormal{ }
\begin{enumerate}[(i)]
\item
The algebraic monodromy group $H_Z$ is a normal subgroup of the derived group of the generic Mumford-Tate group $G_Z$, and of the generic absolute Mumford-Tate group $G_Z^{AH}$.
\item \label{b}
The geometric $\ell$-adic algebraic monodromy group $H_{\ell, Z}$ is a normal subgroup of the generic $\ell$-adic algebraic Galois group $G_{\ell,Z}$.

\end{enumerate}
\end{prop}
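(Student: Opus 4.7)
Part (ii) should reduce to a formal argument. The exact sequence of étale fundamental groups
$$ 1 \to \pi_1^{\acute{e}t}(Z_{\bar L}, z) \to \pi_1^{\acute{e}t}(Z_L, z) \to \mathrm{Gal}(\bar L / L) \to 1 $$
exhibits the geometric fundamental group as a normal subgroup of the arithmetic one, so its image under $\rho_{\ell,Z}$ is normal in the image of $\rho_{\ell,Z}$ inside $GL(\mathbb{L}_z)$. Normality is preserved by Zariski closure (conjugation by a fixed element is a closed morphism) and by passage to the identity component (which is a characteristic subgroup of any algebraic group). This yields $H_{\ell,Z} \triangleleft G_{\ell,Z}$.

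For part (i), the inclusions $H_Z \subset G_Z$ and $H_Z \subset G_Z^{AH}$ are immediate from the definitions: each $v \in \mathcal{H}_Z$ (respectively $v \in \mathcal{AH}_Z$) extends to a flat global section of $\mathbb{V}^{\otimes}$ over some finite étale cover $Z' \to Z$, hence is fixed by the geometric monodromy along $Z'$, whose Zariski closure has the same identity component $H_Z$ as that of the monodromy along $Z$ (since $Z' \to Z$ has finite degree). Thus $H_Z$ fixes every element of $\mathcal{H}_Z$ and of $\mathcal{AH}_Z$, and so lies in the respective pointwise stabilizers $G_Z$ and $G_Z^{AH}$.

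The substantive content of part (i) is the normality of $H_Z$ in $G_Z^{der}$ (and in $(G_Z^{AH})^{der}$), which is the classical theorem of Deligne--André for polarized $\mathbb{Q}$-variations of Hodge structure. I would either cite it or reprove it along the standard lines: equip the Lie algebra $\mathfrak{g}_Z \subset \mathfrak{gl}(\mathbb{V}_z)$ with the Hodge structure induced at a Hodge generic point $z$, use the infinitesimal period map to exhibit the $(-1,1)$-component $\mathfrak{g}_Z^{-1,1}$ as contained in $\mathfrak{h}_Z \otimes \mathbb{C}$, and deduce that $\mathfrak{h}_Z$ is a sub-Hodge structure of $\mathfrak{g}_Z$, hence a Lie ideal. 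The containment $H_Z \subset G_Z^{der}$ comes from the fact that monodromy preserves the polarization of $\mathbb{V}$. For $G_Z^{AH}$ the same argument transposes verbatim: since $G_Z^{AH}$ acts on the polarized VHS $\mathbb{V}$ via its inclusion in $GL(\mathbb{V}_z)$, the Lie algebra $\mathfrak{g}_Z^{AH}$ inherits the Hodge structure from $\mathfrak{gl}(\mathbb{V}_z)$, and the ideal property of $\mathfrak{h}_Z$ follows in the same way. The principal obstacle is this Hodge-theoretic normality step, relying on the generation of $\mathfrak{h}_Z$ by variations coming from the period map; everything else reduces to formal properties of algebraic groups and Tannakian bookkeeping, which is precisely why the $\ell$-adic analog in part (ii) is so much easier.
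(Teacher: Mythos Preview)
Your proof is correct and takes essentially the same route as the paper. The paper likewise uses the homotopy exact sequence for (ii) (your remarks on passing to Zariski closures and identity components make explicit what the paper leaves tacit), and for (i) it first reduces to smooth $Z$, cites Andr\'e for $G_Z$, and invokes Deligne's Principle B---the absolute-Hodge analogue of the theorem of the fixed part underlying Andr\'e's argument---as the mechanism that carries the proof over to $G_Z^{AH}$, which is precisely what makes your ``transposes verbatim'' work.
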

\begin{proof}
Arguing as in (\cite{Abssppaper}, Proposition 2.9), we may restrict to the smooth locus of $Z$, and therefore assume that $Z$ is smooth.
The first part is (\cite{Andre}, Theorem 1), and the corresponding analog for absolute Hodge cycles follows from Deligne's Principle B (\cite{Deligne}, Theorem 2.12).
We now come to the $\ell$-adic part (\ref{b}).
Suppose that $Z$ is defined over a field $L$ finitely generated over $K$.
Choose a geometric point $\bar{z} \in Z(\bar{L})$.
The short exact sequence $$ 1 \to \pi_1^{\acute{e}t}(Z_{\bar{L}}, \bar{z}) \to \pi_1^{\acute{e}t}(Z_L, \bar{z}) \to \mathrm{Gal}(\bar{L}/L) \to 1$$
shows that $H_{\ell, Z}$ is a normal subgroup of $G_{\ell, Z}$.
\end{proof}

The following Lemma is a geometric formulation of two fundamental principles in Hodge theory: the theorem of the fixed part (cf. Theorem \ref{fixedpartnonsm}) and Deligne's Principle B (cf. Theorem \ref{principlebnonsm}).

\begin{lem}\label{lem}
Let $Z$ and $Y$ be closed irreducible subvarieties of $S$.
\begin{enumerate}[(i)]
\item
Suppose $Z \subset Y$ and $H_Y \subset G_Z$, then $G_Z = G_Y$.
\item
Suppose $Z \subset Y$ and $H_Y \subset G^{AH}_Z$, then $G^{AH}_Z = G^{AH}_Y$.
\item\label{c}
Suppose $Z \subset Y$ and $H_{\ell, Y} \subset G_{\ell, Z}$, then $G_{\ell, Z} = G_{\ell, Y}$.
\end{enumerate}
\end{lem}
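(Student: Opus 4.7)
The plan is to treat the three statements in parallel, noting that in each case the inclusion $G_Z \subset G_Y$ (respectively $G_Z^{AH} \subset G_Y^{AH}$, $G_{\ell,Z} \subset G_{\ell,Y}$) is immediate: a generic Hodge (absolute Hodge) tensor over $Y$ restricts to one over $Z$, and in the $\ell$-adic case the map $Z_L \hookrightarrow Y_L$ induces a map of étale fundamental groups whose image under $\rho_{\ell,Y}$ equals the image of $\rho_{\ell,Z}$, so passing to Zariski closures and identity components gives $G_{\ell,Z} \subset G_{\ell,Y}$. The substance of the lemma lies in the reverse inclusions.

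For parts (i) and (ii), I would argue by extending a generic (absolute) Hodge tensor from $Z$ to $Y$. Given $\xi \in \mathcal{H}_Z$ (resp.\ $\mathcal{AH}_Z$), represented by a vector $\xi_z \in \mathbb{V}_z^{\otimes}$ that extends to a flat global section on some finite étale cover of $Z$ taking (absolute) Hodge values everywhere, the hypothesis $H_Y \subset G_Z$ says precisely that $\xi_z$ is fixed by the algebraic monodromy of $Y$. The stabilizer of $\xi_z$ inside $\pi_1(Y^{an},z)$ therefore has finite index, and on the corresponding finite étale cover $Y' \to Y$ the vector $\xi_z$ extends to a flat global section $\tilde\xi$ of $\mathbb{V}^{\otimes}$. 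At a preimage of $z$ in $Y'$ the section $\tilde\xi$ takes the (absolute) Hodge value $\xi_z$, so by the theorem of the fixed part (Theorem \ref{fixedpartnonsm}) (respectively Principle B, Theorem \ref{principlebnonsm}) it is (absolute) Hodge at every point of $Y'$. This shows $\xi \in \mathcal{H}_Y$ (resp.\ $\mathcal{AH}_Y$), yielding $G_Y \subset G_Z$ (resp.\ $G_Y^{AH} \subset G_Z^{AH}$).

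For part (iii) there is no analog of the theorem of the fixed part for Tate classes, so I would instead run a dimension argument. Writing $\Gamma_Z$, $\Gamma_Y^{\mathrm{geom}}$, and $\Gamma_Y$ for the Zariski closures of the images of $\rho_{\ell,Z}$, $\overline{\rho}_{\ell,Y}$, and $\rho_{\ell,Y}$ respectively, the short exact sequence
$$1 \to \pi_1^{\acute{e}t}(Y_{\bar L}, z) \to \pi_1^{\acute{e}t}(Y_L, z) \to \mathrm{Gal}(\bar L/L) \to 1$$
combined with the surjectivity of $\pi_1^{\acute{e}t}(Z_L, z) \twoheadrightarrow \mathrm{Gal}(\bar L/L)$ implies that every element of $\pi_1^{\acute{e}t}(Y_L, z)$ factors as an element coming from $\pi_1^{\acute{e}t}(Z_L, z)$ times one in $\pi_1^{\acute{e}t}(Y_{\bar L}, z)$. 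Passing to images in $GL(\mathbb{L}_z)$ and then to Zariski closures yields the factorization $\Gamma_Y = \Gamma_Z \cdot \Gamma_Y^{\mathrm{geom}}$ as algebraic subgroups; the product is Zariski closed because $\Gamma_Y^{\mathrm{geom}}$ is normal in $\Gamma_Y$ and a constructible subgroup of an algebraic group is closed. The standard formula
$$\dim \Gamma_Y = \dim \Gamma_Z + \dim \Gamma_Y^{\mathrm{geom}} - \dim(\Gamma_Z \cap \Gamma_Y^{\mathrm{geom}})$$
together with the hypothesis $H_{\ell,Y} \subset G_{\ell,Z}$ (which forces $\Gamma_Z \cap \Gamma_Y^{\mathrm{geom}}$ to contain $H_{\ell,Y} = (\Gamma_Y^{\mathrm{geom}})^0$, so that $\dim(\Gamma_Z \cap \Gamma_Y^{\mathrm{geom}}) = \dim \Gamma_Y^{\mathrm{geom}}$) gives $\dim G_{\ell,Y} = \dim G_{\ell,Z}$, and connectedness of both finishes the argument. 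The main obstacle across the three parts is precisely this bridge between set-theoretic factorizations of fundamental group images and corresponding factorizations of algebraic groups — routine once one uses normality of $\Gamma_Y^{\mathrm{geom}}$ and the closedness of constructible subgroups, but it is where all the structure of the $\ell$-adic setting is used.
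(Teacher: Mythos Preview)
Your argument is correct. Parts (i) and (ii) match the paper's proof essentially verbatim: extend a tensor fixed by $G_Z$ (resp.\ $G_Z^{AH}$) to a global section on a finite étale cover of $Y$ using the monodromy hypothesis, then invoke the theorem of the fixed part (resp.\ Principle~B).

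For part (iii) your route differs from the paper's in execution, though the underlying idea is the same. The paper enlarges $L$ so that $Z$ acquires an $L$-rational point $z$, uses it to split the homotopy exact sequence for $Z$ (and for $Y$) as a semi-direct product, and concludes directly that $G_{\ell,Z}$ is generated by $H_{\ell,Z}$ and $G_{\ell,z}$ while $G_{\ell,Y}$ is generated by $H_{\ell,Y}$ and the \emph{same} $G_{\ell,z}$; the hypothesis $H_{\ell,Y}\subset G_{\ell,Z}$ then gives $G_{\ell,Y}\subset G_{\ell,Z}$ immediately. You instead avoid choosing a rational point, use only the surjection $\pi_1^{\acute{e}t}(Z_L,z)\twoheadrightarrow\mathrm{Gal}(\bar L/L)$ to obtain the set-theoretic factorization, pass to Zariski closures via normality of $\Gamma_Y^{\mathrm{geom}}$ and closedness of constructible subgroups, and finish with a dimension count. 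Your version is slightly more robust (no need to produce a rational point) but correspondingly more technical; the paper's splitting argument is shorter and yields the inclusion $G_{\ell,Y}\subset G_{\ell,Z}$ without any dimension comparison.
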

\begin{proof}
\begin{enumerate}[(i)]
\item
We show that $G_Y \subset G_Z$, the other inclusion being clear.
Let $z \in Z(\mathbb{C})$ and suppose $v \in \mathbb{V}_z^{\otimes}$ is fixed by $G_{Z}$. Then up to replacing $Y$ by a finite étale cover, the condition $H_{Y} \subset G_{Z}$ will ensure that $v$ extends to a global section on $Y$. But $v$ is Hodge at the point $z \in Z(\mathbb{C})$, hence is a generic Hodge tensor over $Y$ by the Theorem of the fixed part (Theorem \ref{fixedpartnonsm}). We conclude that $G_{Y}$ fixes $v$.
\item
Similarly, if $v \in \mathbb{V}_z^{\otimes}$ is fixed by $G_Z^{AH}$, up to replacing $Y$ by a finite étale cover the condition $H_{Y} \subset G_{Z}^{AH}$ shows that $v$ extends to a global section on $Y$. We know that it is absolute Hodge at the point $z \in Z(\mathbb{C})$.
Now we use Deligne's Principle B (Theorem \ref{principlebnonsm}) to conclude that a global section which is absolute Hodge at one point is absolute Hodge everywhere, hence fixed by $G_Y^{AH}$.
\item
We may choose a finitely generated field extension $L$ of $K$ such that $Z$ and $Y$ are both defined over $L$ and there exists an $L$-rational point $z \in Z(L)$. Denote by $\bar{z}$ a geometric point lying over $z$.
The point $z$ induces a splitting of the homotopy short exact sequence
$$ 1 \to \pi_1^{\acute{e}t}(Z_{\bar{L}}, \bar{z}) \to \pi_1^{\acute{e}t}(Z_L, \bar{z}) \to Gal(\bar{L}/L) \to 1,$$
giving a description $$\pi_1^{\acute{e}t}(Z_L, \bar{z}) = \pi_1^{\acute{e}t} (Z_{\bar{L}}, \bar{z}) \rtimes \mathrm{Gal}(\bar{L}/L)$$ as a semi-direct product.
This shows that $G_{\ell,Z}$ is generated by $H_{\ell,Z}$ and $G_{\ell,z}$, the identity component of the Zariski closure of the image of the Galois representation attached to the point $z$.
Similarly, $G_{\ell, Y}$ is generated by $H_{\ell, Y}$ and $G_{\ell,z}$. From this we see that $H_{\ell,Y} \subset G_{\ell,Z}$ implies $G_{\ell, Z} = G_{\ell, Y}$.
\end{enumerate}

\end{proof}

\begin{prop}
We have the inclusion $G_{\ell,Z} \subset G_Z^{AH}\otimes \mathbb{Q}_{\ell}$.
\end{prop}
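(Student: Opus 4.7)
The goal is to show that the image of $G_{\ell,Z}$ in $GL(\mathbb{V}_z \otimes_{\mathbb{Q}} \mathbb{Q}_{\ell})$, via the comparison isomorphism $\mathbb{L}_z \cong \mathbb{V}_z \otimes_{\mathbb{Q}} \mathbb{Q}_{\ell}$, fixes every tensor in $\mathcal{AH}_Z$. My plan is to exploit the semi-direct product decomposition already used in the proof of Lemma \ref{lem}(iii). After possibly enlarging the finitely generated extension $L/K$ over which $Z$ is defined, we may assume there exists an $L$-rational point $z \in Z(L)$. The splitting of the homotopy exact sequence
\[
1 \to \pi_1^{\acute{e}t}(Z_{\bar L}, \bar z) \to \pi_1^{\acute{e}t}(Z_L, \bar z) \to \mathrm{Gal}(\bar L/L) \to 1
\]
then expresses $G_{\ell,Z}$ as the subgroup generated by $H_{\ell,Z}$ and by the $\ell$-adic algebraic Galois group $G_{\ell,z}$ of the fiber $\mathcal{X}_z$, so it suffices to prove that both subgroups lie inside $G_Z^{AH} \otimes_{\mathbb{Q}} \mathbb{Q}_{\ell}$.

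For the geometric monodromy factor, Proposition \ref{monodromyladic} identifies $H_{\ell, Z}$ with $H_Z \otimes_{\mathbb{Q}} \mathbb{Q}_{\ell}$ under the comparison isomorphism, while Proposition \ref{normal}(i) gives $H_Z \subset G_Z^{AH}$. For the Galois factor, I first observe the elementary set-theoretic inclusion $\mathcal{AH}_Z \subset \mathcal{AH}_{\{z\}}$: the fiber at $z$ of a global section of $\mathbb{V}^{\otimes (m,n)}$ which is absolute Hodge at every point of a finite étale cover of $Z$ is in particular absolute Hodge at $z$. Passing to stabilizers reverses the inclusion and yields $G_z^{AH} \subset G_Z^{AH}$. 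It then remains to invoke the classical single-variety statement $G_{\ell,z} \subset G_z^{AH} \otimes_{\mathbb{Q}} \mathbb{Q}_{\ell}$ for the fiber $\mathcal{X}_z$, recalled in the introduction and in Remark \ref{grouppoint} (cf.\ \cite{DM}, §6), which expresses the Tannakian fact that the $\ell$-adic realization of an absolute Hodge cycle is invariant under an open subgroup of the absolute Galois group.

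The only substantive input is the single-variety inclusion; everything else is organizational bookkeeping gluing together one geometric monodromy contribution with one arithmetic fiber via the existence of an $L$-rational point on $Z$, so I do not expect a serious obstacle beyond ensuring that the reductions to the $L$-rational case and to a single fiber $\mathcal{X}_z$ are carried out cleanly.
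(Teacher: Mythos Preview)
Your proposal is correct and follows essentially the same architecture as the paper's proof: both enlarge $L$ to acquire an $L$-rational point, use the resulting semi-direct product decomposition to reduce to showing that $H_{\ell,Z}$ and $G_{\ell,z}$ each lie in $G_Z^{AH}\otimes\mathbb{Q}_{\ell}$, and handle $H_{\ell,Z}$ via Proposition~\ref{monodromyladic} and Proposition~\ref{normal}. The only cosmetic difference is in the arithmetic factor: the paper argues directly that $\mathrm{Gal}(\bar L/L)$ preserves the $\mathbb{Q}$-space $\mathcal{AH}_Z$ (extending Galois elements to $\mathrm{Aut}(\mathbb{C}/\mathbb{Q})$, then using continuity to trivialize the action after a finite extension, exactly as in \cite{Deligne}, Proposition~2.9(b)), whereas you factor through the inclusion $\mathcal{AH}_Z \subset \mathcal{AH}_{\{z\}}$ and invoke the single-variety case $G_{\ell,z}\subset G_z^{AH}\otimes\mathbb{Q}_{\ell}$ as a black box from Deligne's work; since that black box is proved by precisely the argument the paper spells out, the two treatments are interchangeable.
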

\begin{proof}
This is an adaptation of the argument given in (\cite{Deligne}, Proposition 2.9(b)).
Let $L$ be a field of definition of $Z$ which is finitely generated over $K$. Up to replacing $L$ by a finite extension, we may assume the existence of a point $z \in Z(L)$. Then $\mathrm{Gal}(\bar{L}/L)$ acts on the fiber $\mathbb{L}_{\bar{z}}$ over a geometric point $\bar{z} \in Z(\bar{L})$ lying over $z$.
We claim that $\mathcal{AH}^{\otimes (m,n)}_Z \subset \mathbb{L}^{\otimes (m,n)}_{\bar{z}}$ is preserved by $\mathrm{Gal}(\bar{L}/L)$. Indeed, any element in $\mathrm{Gal}(\bar{L}/L)$ can be extended to an automorphism in $\mathrm{Aut}(\mathbb{C}/\mathbb{Q})$, and these preserve $\mathcal{AH}^{\otimes (m,n)}_Z$ by definition.
Now the profinite group $\mathrm{Gal}(\bar{L}/L)$ acts continuously on the finite dimensional $\mathbb{Q}$-vector space $\mathcal{AH}^{\otimes (m,n)}_Z$ and thus acts through a finite quotient.
After again replacing $L$ by a finite extension we may assume that $\mathrm{Gal}(\bar{L}/L)$ acts trivially.
This shows that the identity component $G_{\ell,z}$ of the Zariski closure of the Galois representation attached to the point $z$ is contained in $G_Z^{AH}\otimes \mathbb{Q}_{\ell}$, as the latter is defined as the group fixing the elements in $\mathcal{AH}_Z$.
Since we know from Proposition \ref{monodromyladic} that $H_{\ell,Z} \subset G_Z^{AH} \otimes \mathbb{Q}_{\ell}$, arguing as in the proof of Lemma \ref{lem}(\ref{c}) we can show that $G_Z^{AH} \otimes \mathbb{Q}_{\ell}$ contains $G_{\ell,Z}$.
\end{proof}

\subsection{Relation to the usual Mumford-Tate conjecture}\label{usualMT}

We briefly describe the relation of Conjecture \ref{conjtate} to the usual absolute Mumford-Tate conjecture \ref{absMT}.
Let $Z\subset S$ be a closed irreducible subvariety.

\begin{mydef}
A point $z \in Z(\mathbb{C})$ is called \emph{$\ell$-Galois generic} if $G_{\ell,z} = G_{\ell, Z}$.
\end{mydef}

\begin{lem}\label{existencegalgen}
Let $Z$ be defined over a field $L$ finitely generated over $K$.
If $Z^{sm}$ denotes the smooth locus of $Z$, we have $G_{\ell, Z^{sm}} = G_{\ell, Z}$.
As a consequence, if $Z$ is positive dimensional, there is a number $d \ge 0$ such that there are infinitely many $\ell$-Galois generic points of $Z$ defined over extensions of degree $\le d$ over $L$.
\end{lem}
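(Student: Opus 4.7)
The proof divides into two parts. For the equality $G_{\ell, Z^{sm}} = G_{\ell, Z}$, the plan is to use the standard fact (SGA~1, Expos\'e~V) that an open dense immersion of irreducible noetherian schemes induces a surjection on \'etale fundamental groups: applied to $Z^{sm}_L \hookrightarrow Z_L$ this yields $\pi_1^{\acute{e}t}(Z^{sm}_L, \bar z) \twoheadrightarrow \pi_1^{\acute{e}t}(Z_L, \bar z)$. Since $\mathbb{L}|_{Z^{sm}_L}$ is obtained by restricting $\mathbb{L}|_{Z_L}$, the monodromy $\rho_{\ell, Z^{sm}}$ factors as the composition of this surjection with $\rho_{\ell, Z}$, so the two representations have the same image in $GL(\mathbb{L}_{\bar z})$. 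Their Zariski closures and identity components therefore coincide.

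For the existence statement, the first part lets us reduce to the case where $Z$ is smooth. Setting $\Gamma := \rho_{\ell, Z}(\pi_1^{\acute{e}t}(Z_L, \bar z))$, the subgroup $\Gamma \cap G_{\ell, Z}(\mathbb{Q}_\ell)$ is open and of finite index $e$ in $\Gamma$ (since $G_{\ell, Z}$ is the identity component of the Zariski closure of $\Gamma$). Its preimage $U \subset \pi_1^{\acute{e}t}(Z_L)$ corresponds to a connected finite \'etale cover $\tilde Z \to Z_L$ of degree $e$, and after replacing $L$ by a finite extension $L^+/L$ of degree at most $e$, the cover $\tilde Z_{L^+}$ becomes geometrically connected with arithmetic monodromy Zariski dense in $G_{\ell, Z}$ and $G_{\ell, \tilde Z_{L^+}} = G_{\ell, Z}$. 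Producing infinitely many $\ell$-Galois generic closed points of $\tilde Z_{L^+}$ of bounded residue-field degree over $L^+$ will then yield, via the projection $\tilde Z \to Z$, the desired conclusion with $d$ controlled by $e$.

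To produce such generic points on $\tilde Z_{L^+}$, the plan is to apply Hilbert's irreducibility theorem: $L^+$ is finitely generated over $\mathbb{Q}$ and hence Hilbertian, and $\tilde Z_{L^+}$ is geometrically integral of positive dimension. For each proper closed algebraic subgroup $H \subsetneq G_{\ell, Z}$, the condition $\mathrm{Im}(\rho_{\ell, \tilde z}) \subset H(\mathbb{Q}_\ell)$ descends to each finite quotient $\Gamma/\Gamma(n)$ as the condition that $\tilde z$ factor through a certain finite \'etale subcover of $\tilde Z_{L^+}$, defining a thin subset. Only countably many such conditions need to be avoided (countably many conjugacy classes of maximal proper subgroups of $G_{\ell, Z}$, at countably many finite levels of $\Gamma$), so a countable form of Hilbert irreducibility --- standard over finitely generated fields of characteristic zero, and closely related to the Cadoret--Tamagawa specialization technique --- produces infinitely many closed points of bounded degree escaping all the thin loci, which are therefore $\ell$-Galois generic. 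The hardest part will be this simultaneous control of countably many thin sets, which is precisely what the Hilbertian property of $L^+$ provides.
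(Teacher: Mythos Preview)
Your argument for the first part contains a genuine gap. The surjectivity of
\[
\pi_1^{\acute{e}t}(Z^{sm}_L, \bar z) \to \pi_1^{\acute{e}t}(Z_L, \bar z)
\]
for an open dense immersion is \emph{not} a general fact for irreducible noetherian schemes; it requires $Z_L$ to be normal (or at least geometrically unibranch), which is exactly what fails here. The reference you cite (SGA~1, Expos\'e~V) gives this only under a normality hypothesis. For a concrete counterexample, take $Z$ to be an affine nodal curve over $\mathbb{C}$: then $\pi_1^{\acute{e}t}(Z) \cong \hat{\mathbb{Z}}$, while $Z^{sm} \cong \mathbb{A}^1 \setminus \{2\text{ points}\}$, and the induced map on fundamental groups is trivial, since a small loop around either deleted point bounds a disc in (one branch of) $Z$ at the node. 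So the images of $\rho_{\ell,Z^{sm}}$ and $\rho_{\ell,Z}$ need not coincide, and your conclusion does not follow.

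The paper avoids this problem by a genuinely different route. It first invokes a Hodge-theoretic fact (Remark~\ref{notsmooth}, proved via the period map) that the topological monodromy groups satisfy $H_{Z^{sm}} = H_Z$; by Proposition~\ref{monodromyladic} this gives $H_{\ell,Z^{sm}} = H_{\ell,Z}$. Then, choosing a point $z \in Z^{sm}(L)$ and using the splitting of the homotopy exact sequence as in Lemma~\ref{lem}(\ref{c}), both $G_{\ell,Z}$ and $G_{\ell,Z^{sm}}$ are generated by the common group $G_{\ell,z}$ together with $H_{\ell,Z} = H_{\ell,Z^{sm}}$, hence agree. So the equality $G_{\ell,Z^{sm}} = G_{\ell,Z}$ is not a formal statement about \'etale fundamental groups but genuinely uses the Hodge-theoretic input.

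Your sketch for the second part (pass to a finite \'etale cover to make the arithmetic monodromy land in the connected group, then apply a Hilbertian-type specialization argument) is along the correct lines and is essentially what underlies Serre's result, which the paper simply cites.
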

\begin{proof}
Using methods from Hodge theory one can show that $H_Z = H_{Z^{sm}}$ (cf. \cite{Abssppaper}, Lemma 1.9), hence also $H_{\ell,Z} = H_{\ell,Z^{sm}}$ by the arguments in the proof of Proposition \ref{monodromyladic}.
Arguing as in the proof of Lemma \ref{lem}(\ref{c}) it follows that $G_{\ell, Z^{sm}} = G_{\ell,Z}$.
For the smooth part
it is known (cf. \cite{Serre}, Section 10.6) that there are infinitely many points defined over extensions of bounded degree such that $G_{\ell,z} =G_{\ell,Z^{sm}} =G_{\ell, Z}$.
\end{proof}

We can now show that Conjectures \ref{conjdel} and \ref{conjtate} are in fact equivalent to the usual conjectures \ref{HAH} and \ref{absMT} over a point.
Namely, for a given subvariety $Z \subset S$, we may choose an $\ell$-Galois generic point $z \in Z(\mathbb{C})$.
Then $H_{\ell,Z} \subset G_{\ell,Z}=G_{\ell,z} \subset G_z^{AH} \otimes \mathbb{Q}_{\ell}$ and by Lemma \ref{lem} we have $G_z^{AH}=G_Z^{AH}$.
Assuming that Conjecture \ref{conjtate} holds true for the point $z$, we obtain the desired equality $$G_{\ell, Z} = G_{\ell, z} =G_z^{AH} \otimes \mathbb{Q}_{\ell} = G_Z^{AH}\otimes \mathbb{Q}_{\ell}.$$
The argument for Conjecture \ref{conjdel} is similar.

\section{$\ell$-Galois special subvarieties}

In this section we recall the notion of special subvariety as a subvariety which is maximal with a given generic Mumford-Tate group.
Motivated by Conjectures \ref{conjdel} and \ref{conjtate} we introduce a notion of absolutely special and $\ell$-Galois special subvarieties, replacing the generic Mumford-Tate group by the generic absolute Mumford-Tate group and the generic $\ell$-adic algebraic Galois group.

\subsection{Definition}

We define special, absolutely special and $\ell$-Galois special subvarieties and prove their first properties.

\begin{mydef}[\cite{KO}, Definition 1.2; \cite{Abssppaper}, Definition 0.4]\label{definitionspecial}
\textnormal{ }
\begin{enumerate}[(i)]
\item
A closed irreducible subvariety $Z \subset S$ is called \emph{special}
if it is maximal among the closed irreducible subvarieties of $S$ having the same generic Mumford-Tate group as $Z$.
\item
A closed irreducible subvariety $Z \subset S$ is called \emph{absolutely special}
if it is maximal among the closed irreducible subvarieties of $S$ having the same generic absolute Mumford-Tate group as $Z$.
\end{enumerate}
\end{mydef}

A closely related variant of the definition of absolutely special subvarieties, called dR-absolutely special subvarieties, was given in (\cite{Abssppaper}, Definition 0.4).

We define the following $\ell$-adic incarnation of special subvarieties.

\begin{mydef} \label{defGalsp}

A closed irreducible subvariety $Z \subset S$ is called \emph{$\ell$-Galois special} if it is maximal among the closed irreducible subvarieties $Y$ of $S$ whose generic $\ell$-adic algebraic Galois group $G_{\ell,Y}$ equals $G_{\ell,Z}$.
\end{mydef}

In other words, $Z \subset S$ is $\ell$-Galois special if there does not exist a closed irreducible subvariety $Y$ of $S$ containing $Z$ strictly with $G_{\ell,Y} = G_{\ell, Z}$.

We prove that an absolutely special subvariety is both special and $\ell$-Galois special for all primes $\ell$.

\begin{prop} \label{absspecial}
Let $Z$ be an absolutely special subvariety.
\begin{enumerate}[(i)]
\item \label{both}
$Z$ is both special and $\ell$-Galois special.
\item \label{conj}
$Z^{\sigma}$ is absolutely special for all $\sigma \in \mathrm{Aut}(\mathbb{C} / K)$.
\item \label{three}
$Z$ is defined over $\bar{K}$ and all its Galois conjugates are special.
\end{enumerate}
\end{prop}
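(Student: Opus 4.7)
For part (\ref{both}), the plan is to apply Lemma \ref{lem}(ii) after placing the algebraic monodromy group $H_Y$ inside $G_Z^{AH}$. If $Z \subset Y$ with $G_Y = G_Z$, Proposition \ref{normal} gives $H_Y \subset G_Y = G_Z \subset G_Z^{AH}$, so Lemma \ref{lem}(ii) yields $G_Y^{AH} = G_Z^{AH}$, forcing $Y = Z$ by absolute specialness of $Z$. For the $\ell$-adic case, if $Z \subset Y$ with $G_{\ell,Y} = G_{\ell,Z}$, then combining Proposition \ref{monodromyladic} with the preceding proposition gives
$$ H_Y \otimes \mathbb{Q}_\ell \;\cong\; H_{\ell,Y} \;\subset\; G_{\ell,Y} \;=\; G_{\ell,Z} \;\subset\; G_Z^{AH} \otimes \mathbb{Q}_\ell. $$
Since $H_Y$ and $G_Z^{AH}$ are $\mathbb{Q}$-subgroups of $GL(\mathbb{V}_z)$, this descends to $H_Y \subset G_Z^{AH}$, and Lemma \ref{lem}(ii) again forces $Y = Z$.

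For part (\ref{conj}), the key input is that the absolute Hodge property is by definition preserved by $\mathrm{Aut}(\mathbb{C}/\mathbb{Q})$. Thus $\sigma$ induces (via the de Rham and étale comparisons) a bijection $\mathcal{AH}_Z \overset{\sim}{\to} \mathcal{AH}_{Z^\sigma}$ and hence a canonical isomorphism $G_Z^{AH} \overset{\sim}{\to} G_{Z^\sigma}^{AH}$, in perfect analogy with Proposition \ref{conjgalsp} for the $\ell$-adic group. Maximality then transfers under $\sigma$: a strict containment $Z^\sigma \subsetneq Y$ with $G_Y^{AH} = G_{Z^\sigma}^{AH}$ would $\sigma^{-1}$-conjugate to a strict containment $Z \subsetneq Y^{\sigma^{-1}}$ with $G_{Y^{\sigma^{-1}}}^{AH} = G_Z^{AH}$, contradicting the absolute specialness of $Z$.

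For part (\ref{three}), once $Z$ is shown to be defined over $\bar{K}$, its Galois conjugates are absolutely special by (\ref{conj}) and hence special by (\ref{both}). To show $Z$ is defined over $\bar{K}$, I argue that the subspace $\mathcal{AH}_Z^{\otimes(m,n)} \subset \mathbb{V}_z^{\otimes(m,n)}$ is a finite-dimensional $\mathbb{Q}$-vector space preserved by the continuous action of $\mathrm{Gal}(\bar{K}/K)$ on the étale and de Rham realizations (by the very definition of absolute Hodge). Hence this action factors through a finite quotient; after replacing $K$ by a finite extension, finitely many generic absolute Hodge tensors cutting out $G_Z^{AH}$ may be realized as flat sections of $\mathcal{V}^{\otimes}$ on a finite étale cover of $S_{\bar{K}}$ with $\mathrm{Gal}(\bar{K}/K)$-invariant étale realizations. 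The Zariski-closed subvariety of $S_{\bar{K}}$ cut out by the condition that these sections lie in $F^0$ is then $\mathrm{Gal}(\bar{K}/K)$-stable, and $Z$ is an irreducible component of it.

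The main obstacle is the defined-over-$\bar{K}$ claim in (\ref{three}): the definition of absolutely special subvariety is intrinsically complex-analytic, and descending it to $\bar{K}$ rests on combining the algebraicity of the Hodge filtration on $\mathcal{V}$ with the crucial finiteness of Galois orbits on absolute Hodge tensors, which is precisely what makes the "absolute" version of the theory better behaved arithmetically than the purely Hodge-theoretic one.
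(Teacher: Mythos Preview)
Your arguments for (\ref{both}) and (\ref{conj}) are correct and essentially match the paper's proof; in fact for (\ref{conj}) you spell out the maximality transfer under $\sigma$ that the paper leaves implicit.

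Your argument for (\ref{three}), however, contains a genuine gap. You assert that the generic absolute Hodge tensors cutting out $G_Z^{AH}$ ``may be realized as flat sections of $\mathcal{V}^{\otimes}$ on a finite \'etale cover of $S_{\bar{K}}$''. This is false in general: by definition, elements of $\mathcal{AH}_Z$ are flat sections on a finite \'etale cover of $Z$, not of $S$. If such a section extended flatly to a cover of $S$, it would lie in $\mathcal{AH}_S$ (by Theorem~\ref{principlebnonsm}), forcing $G_S^{AH} \subset G_Z^{AH}$ and hence $G_Z^{AH} = G_S^{AH}$, so $Z = S$ by absolute specialness. Consequently the ``Zariski-closed subvariety of $S_{\bar{K}}$ cut out by the condition that these sections lie in $F^0$'' is not well-defined as a subvariety of $S$; at best it is an analytic germ near $z$ obtained by local parallel transport, and you would then need Cattani--Deligne--Kaplan to algebraize it and a further argument to descend it to $\bar{K}$, neither of which you supply.

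The paper's route is much shorter and bypasses this entirely. By (\ref{both}) and (\ref{conj}), every $\mathrm{Aut}(\mathbb{C}/K)$-conjugate $Z^{\sigma}$ is special. But by Cattani--Deligne--Kaplan there are only countably many special subvarieties of $S$, so the $\mathrm{Aut}(\mathbb{C}/K)$-orbit of $Z$ is countable. A closed irreducible complex subvariety with countable $\mathrm{Aut}(\mathbb{C}/K)$-orbit must be defined over $\bar{K}$, and then its $\mathrm{Gal}(\bar{K}/K)$-conjugates are special by (\ref{both}) and (\ref{conj}). This is the argument you should use: the countability of special subvarieties is doing the descent work that your direct approach cannot.
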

\begin{proof}
For (\ref{both}), let us prove that $Z$ is $\ell$-Galois special. The proof for special is similar (cf. \cite{Abssppaper}, Proposition 3.2).
Suppose there exists a closed irreducible subvariety $Y \supset Z$ such that $G_{\ell, Z} = G_{\ell,Y}$.
It follows that $$H_{\ell,Y} \subset G_{\ell,Y} = G_{\ell,Z} \subset G^{AH}_{Z}\otimes \mathbb{Q}_{\ell}.$$ Using Proposition \ref{monodromyladic} this in turn shows that $H_Y \subset G_Z^{AH}$. Lemma \ref{lem} then says that $G_Y^{AH} = G_Z^{AH}$. As $Z$ is absolutely special this implies $Y=Z$. Hence $Z$ is $\ell$-Galois special.
Since the action of $\mathrm{Aut}(\mathbb{C}/K)$ on $\ell$-adic étale cohomology maps absolute Hodge classes to absolute Hodge classes, there is a natural isomorphism $G^{AH}_Z \otimes \mathbb{Q}_{\ell} \cong G^{AH}_{Z^{\sigma}} \otimes \mathbb{Q}_{\ell}$. This proves (\ref{conj}).
Now (\ref{three}) follows from (\ref{both}) and the fact that there are only countably many special subvarieties.
\end{proof}

It follows from Conjectures \ref{conjdel} and \ref{conjtate} that the notions of special, $\ell$-Galois special and absolutely special subvarieties should all be equivalent. The fact that we can only prove that absolutely special implies special and $\ell$-Galois special corresponds to the caveat that we only know the two inclusions $G_Z \subset G_Z^{AH}$ and $G_{\ell,Z} \subset G_Z^{AH} \otimes_{\mathbb{Q}} \mathbb{Q}_{\ell}$.

\subsection{Fields of definition of $\ell$-Galois special subvarieties}

We prove that $\ell$-Galois special subvarieties are defined over $\bar{K}$, and their Galois conjugates are again $\ell$-Galois special.

\begin{prop}\label{conjGalsp}
If $Z\subset S$ is an $\ell$-Galois special subvariety then for every $\sigma \in \mathrm{Aut}(\mathbb{C}/K)$, the conjugate $Z^{\sigma} \subset S$ is an $\ell$-Galois special subvariety.
\end{prop}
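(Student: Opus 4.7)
The plan is to argue by contradiction, leveraging Proposition \ref{conjgalsp} which provides a natural isomorphism $G_{\ell, Z^{\sigma}} \cong G_{\ell, Z}$ induced by any $\sigma \in \mathrm{Aut}(\mathbb{C}/K)$. The key observation is that conjugation by $\sigma$ acts as an inclusion-preserving bijection on the set of closed irreducible complex subvarieties of $S$ (since $S$ is defined over $K$ and hence stable under $\sigma$), and this bijection intertwines the formation of generic $\ell$-adic algebraic Galois groups. Maximality subject to a fixed Galois group is therefore preserved under the bijection $Y \mapsto Y^{\sigma}$.

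Concretely, I would suppose for contradiction that $Z^{\sigma}$ fails to be $\ell$-Galois special, so that there is a closed irreducible subvariety $Y \subset S$ strictly containing $Z^{\sigma}$ with $G_{\ell, Y} = G_{\ell, Z^{\sigma}}$. Choose a geometric point $\bar{z} \in Z(\bar{K})$; then $\bar{z}^{\sigma}$ lies in $Z^{\sigma}(\bar{K}) \subset Y(\bar{K})$ and serves as a base point computing both $G_{\ell, Y}$ and $G_{\ell, Z^{\sigma}}$ as subgroups of $GL(\mathbb{L}_{\bar{z}^{\sigma}})$. Applying Proposition \ref{conjgalsp} with the automorphism $\sigma^{-1}$ to the pairs $(Y, \bar{z}^{\sigma})$ and $(Z^{\sigma}, \bar{z}^{\sigma})$, the \emph{same} isomorphism $\mathbb{L}_{\bar{z}^{\sigma}} \cong \mathbb{L}_{\bar{z}}$ identifies $G_{\ell, Y^{\sigma^{-1}}}$ with $G_{\ell, Y}$ and $G_{\ell, Z}$ with $G_{\ell, Z^{\sigma}}$, yielding the equality $G_{\ell, Y^{\sigma^{-1}}} = G_{\ell, Z}$ inside $GL(\mathbb{L}_{\bar{z}})$. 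Since $Y^{\sigma^{-1}} \supsetneq Z$, this contradicts the assumption that $Z$ is $\ell$-Galois special.

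The only subtlety worth checking is that the identification $\mathbb{L}_{\bar{z}^{\sigma}} \cong \mathbb{L}_{\bar{z}}$ is the same one used for both $Y$ and $Z^{\sigma}$; this is immediate from the construction in the proof of Proposition \ref{conjgalsp}, where the isomorphism of stalks depends only on $\sigma$ and the chosen geometric point, not on the particular subvariety containing it. I do not expect any serious obstacle, as the statement is essentially a formal consequence of Proposition \ref{conjgalsp} together with the observation that any inclusion-preserving bijection preserves the property of being maximal subject to a given invariant.
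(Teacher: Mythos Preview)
Your proposal is correct and follows essentially the same approach as the paper: take a closed irreducible $Y \supset Z^{\sigma}$ with $G_{\ell,Y}=G_{\ell,Z^{\sigma}}$, apply Proposition~\ref{conjgalsp} with $\sigma^{-1}$ to obtain $G_{\ell,Y^{\sigma^{-1}}}=G_{\ell,Z}$, and conclude $Y^{\sigma^{-1}}=Z$ by the $\ell$-Galois specialness of $Z$. The only cosmetic point is that your chosen base point should lie in $Z(\bar{L})$ for a finitely generated field of definition $L$ of $Z$ rather than in $Z(\bar{K})$, but this does not affect the argument.
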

\begin{proof}
Consider a closed irreducible subvariety $Y \supset Z^{\sigma}$ that satisfies $G_{\ell,Y}=G_{\ell,Z^{\sigma}}$. Proposition \ref{conjgalsp} shows that for the conjugates $Y^{\sigma^{-1}} \supset Z$ we have the equality
$G_{\ell,Y^{\sigma^{-1}}} = G_{\ell,Z} $. Since $Z$ is assumed to be $\ell$-Galois special, it follows that $ Y^{\sigma^{-1}} = Z$. This proves that $Y = Z^{\sigma}$ and $Z^{\sigma}$ is $\ell$-Galois special.
\end{proof}

However, Conjecture \ref{everyGalspabssp} predicts more, namely that all $\ell$-Galois special subvarieties are defined over finite extensions of $K$.

\begin{prop}
Let $Z \subset S$ be a closed irreducible subvariety and let $Y$ denote the smallest closed $\bar{K}$-subvariety of $S$ containing $Z$. Then $G_{\ell,Z}= G_{\ell,Y}$.
\end{prop}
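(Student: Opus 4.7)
The plan is to prove the two inclusions $G_{\ell,Z} \subset G_{\ell,Y}$ and $G_{\ell,Y} \subset G_{\ell,Z}$ separately. Choose a finite extension $M/K$ inside $\bar{K}$ such that $Y$ descends to a geometrically integral $M$-subvariety $Y_M$ of $S_M$, and enlarge $L$ so that $M \subset L$; by the independence lemma preceding Proposition \ref{conjgalsp} this does not change $G_{\ell,Z}$. Using Lemma \ref{existencegalgen} for both $Y$ and $Z$, I may further replace $Y_M$ and $Z_L$ by their smooth loci, so I assume both are smooth (hence normal and integral). The inclusion $Z_L \hookrightarrow Y_L = Y_M \otimes_M L$ followed by the base-change projection $Y_L \to Y_M$ yields a map $\pi_1^{\acute{e}t}(Z_L, \bar{z}) \to \pi_1^{\acute{e}t}(Y_M, \bar{z})$ through which $\rho_{\ell,Z}$ factors as the composition with $\rho_{\ell,Y}$. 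Taking Zariski closures and identity components therefore gives $G_{\ell,Z} \subset G_{\ell,Y}$.

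For the reverse inclusion I exploit the minimality of $Y$ at the level of generic points. Let $\eta_Z$ be the generic point of $Z_L$ and $\eta_M$ the generic point of $Y_M$. I claim that the image of $\eta_Z$ under $S_L \to S_M$ is $\eta_M$: indeed, the closure in $S_M$ of the image of $Z_L$ must equal $Y_M$, for otherwise a proper $M$-subvariety $V \subsetneq Y_M$ containing this image would, after base change to $\bar{K}$, yield a proper $\bar{K}$-subvariety $V \otimes_M \bar{K} \subsetneq Y$ still containing $Z$, contradicting the definition of $Y$. Combined with the irreducibility of $Y_M$, this forces $\eta_Z \mapsto \eta_M$, and hence an inclusion of function fields $F_M := k(Y_M) \hookrightarrow k(Z)$. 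Since $Y_M$ and $Z_L$ are normal integral, passage to the generic point gives surjections $\mathrm{Gal}(\overline{F_M}/F_M) \twoheadrightarrow \pi_1^{\acute{e}t}(Y_M, \bar{\eta}_M)$ and $\mathrm{Gal}(\overline{k(Z)}/k(Z)) \twoheadrightarrow \pi_1^{\acute{e}t}(Z_L, \bar{\eta}_Z)$, compatible with the restriction map $\mathrm{Gal}(\overline{k(Z)}/k(Z)) \to \mathrm{Gal}(\overline{F_M}/F_M)$ whose image is $\mathrm{Gal}(\overline{F_M}/E)$, where $E$ denotes the algebraic closure of $F_M$ inside $k(Z)$.

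Because $k(Z)$ is finitely generated as a field over $F_M$ (itself being finitely generated over $M$), the classical fact that the algebraic closure of a field inside a finitely generated extension is itself finite over the base implies $[E:F_M] < \infty$. Transferring through the surjections, the map $\pi_1^{\acute{e}t}(Z_L) \to \pi_1^{\acute{e}t}(Y_M)$ has image of finite index, and consequently the image of $\rho_{\ell,Z}$ has finite index in that of $\rho_{\ell,Y}$. Their Zariski closures then share an identity component, yielding $G_{\ell,Y} \subset G_{\ell,Z}$. The step requiring the most care is the generic-point identification $\eta_Z \mapsto \eta_M$ in the second paragraph; once this is in place, the rest is a formal manipulation of étale fundamental groups combined with the standard field-theoretic finiteness statement about algebraic closures.
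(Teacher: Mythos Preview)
Your proof is correct and follows essentially the same route as the paper: reduce to smooth loci via Lemma \ref{existencegalgen}, identify the generic point of $Z_L$ with that of $Y_M$ using the minimality of $Y$, pass to absolute Galois groups of the function fields via the surjections of SGA1, and conclude using that the relative algebraic closure in a finitely generated field extension is finite (the paper cites this as the argument in \cite{Moonen}, Remark 2.2.2(i)). The only cosmetic difference is that you split the argument into two inclusions whereas the paper obtains equality directly; one small point to tighten is that replacing $Y$ and $Z$ separately by their smooth loci need not preserve the inclusion $Z\subset Y$---the paper is slightly more careful and passes to the smooth locus of $Z\cap Y^{sm}$, noting this intersection is nonempty precisely by the minimality of $Y$.
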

\begin{proof}
By Lemma \ref{existencegalgen} we may replace $Y$ by its smooth locus $Y^{sm}$ and $Z$ by the smooth locus of $Z \cap Y^{sm}$. Here we note that the last intersection is non-empty by the assumption that $Y$ is the smallest closed $\bar{K}$-subvariety containing $Z$.
We may therefore assume that $Z$ and $Y$ are both smooth.
Choose a finite extension $E$ of $K$ such that $Y$ is defined over $E$ and irreducible as an $E$-scheme, and choose a field of definition $L$ of $Z$ which is finitely generated over $E$.
By the fact that $Z$ is $\bar{K}$-Zariski dense in $Y$, the generic point $\eta_Z$ of $Z_L$ maps to the generic point $\eta_{Y}$ of $Y_E$. Choose geometric points $\bar{\eta}_Y$ and $\bar{\eta}_Z$ lying over $\eta_Y$ respectively $\eta_Z$.
Since $Z_L$ and $Y_E$ are smooth integral schemes, by (\cite{Grothendieck} Exposé V, Proposition 8.2) there is a surjection of étale fundamental groups $$\mathrm{Gal}(\overline{\kappa(\eta_Z)} / \kappa(\eta_Z)) = \pi_1^{\acute{e}t}(\eta_Z , \bar{\eta}_Z) \twoheadrightarrow \pi_1^{\acute{e}t}(Z_L, \bar{\eta}_Z)$$ and
$$\mathrm{Gal}(\overline{\kappa(\eta_Y)} / \kappa(\eta_Y)) = \pi_1^{\acute{e}t}(\eta_Y , \bar{\eta}_Y) \twoheadrightarrow \pi_1^{\acute{e}t}(Y_E, \bar{\eta}_Y).$$
Hence $G_{\ell,Z}$ can be identified with the identity component of the Zariski closure of the image of $\mathrm{Gal}(\overline{\kappa(\eta_Z)} / \kappa(\eta_Z))$, and similarly for $G_{\ell,Y}$.
Since $\kappa(\eta_Z)$ and $\kappa(\eta_Y)$ are finitely generated fields, the argument in (\cite{Moonen}, Remark 2.2.2(i)) shows that
$$\mathrm{Gal}(\overline{\kappa(\eta_Z)} / \kappa(\eta_Z)) \to \mathrm{Gal}(\overline{\kappa(\eta_Y)} / \kappa(\eta_Y)) $$ is a surjection onto a finite index subgroup.
This proves that $G_{\ell,Z} = G_{\ell,Y}$.
\end{proof}

\begin{cor}\label{fodgalsp}
Let $Z\subset S$ be an $\ell$-Galois special subvariety. Then $Z$ is defined over $\bar{K}$ and all $\mathrm{Gal}(\bar{K} / K)$-conjugates of $Z$ are again $\ell$-Galois special.
\end{cor}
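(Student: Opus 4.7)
The plan is to derive this corollary essentially as a formal consequence of the two propositions that immediately precede it, so the work amounts to chaining them together correctly.

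First I would establish that $Z$ descends to $\bar{K}$. Let $Y \subset S$ denote the smallest closed $\bar{K}$-subvariety of $S$ containing $Z$. Then certainly $Z \subset Y$ as complex subvarieties, and the preceding proposition gives $G_{\ell,Z} = G_{\ell,Y}$. Since $Z$ is assumed to be $\ell$-Galois special, maximality forces $Z = Y$, and hence $Z$ itself is a $\bar{K}$-subvariety of $S$.

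For the second assertion, I would use Proposition \ref{conjGalsp}. Given any $\tau \in \mathrm{Gal}(\bar{K}/K)$, lift $\tau$ to an automorphism $\sigma \in \mathrm{Aut}(\mathbb{C}/K)$. Because $Z$ is defined over $\bar{K}$, the conjugate $Z^{\sigma}$ depends only on $\sigma|_{\bar{K}} = \tau$, so $Z^{\sigma} = Z^{\tau}$ as subvarieties of $S$. Proposition \ref{conjGalsp} then asserts that $Z^{\sigma}$ is $\ell$-Galois special, which is exactly what is needed.

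There is no real obstacle here beyond bookkeeping: the content of the corollary is that ``$\ell$-Galois specialness'' is a property that behaves well under descent to $\bar{K}$ and under the residual Galois action, and both inputs are already supplied. The only point that deserves a line of justification is that $Y^{sm}\cap Z$ (or more precisely, the containment $Z \subset Y$) is strict enough to invoke maximality, but this is automatic once one unwinds the definition of $Y$ as the smallest containing $\bar{K}$-subvariety.
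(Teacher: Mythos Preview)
Your argument is correct and is exactly the implicit deduction the paper has in mind: the corollary is stated without proof, immediately after the proposition asserting $G_{\ell,Z}=G_{\ell,Y}$ for the $\bar{K}$-Zariski closure $Y$, and is meant to be read as the formal combination of that proposition with Proposition~\ref{conjGalsp}, just as you do.

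One small remark: your closing paragraph flags the wrong issue. The intersection $Y^{sm}\cap Z$ is used in the \emph{proof} of the preceding proposition, not in applying it here. The only point actually worth a word when invoking maximality is that $Y$ is an \emph{irreducible} complex subvariety (so that the definition of $\ell$-Galois special applies to the pair $Z\subset Y$); this holds because $\bar{K}$ is algebraically closed, so the $\bar{K}$-Zariski closure of the irreducible set $Z$ is geometrically irreducible. Once that is noted, the deduction is immediate.
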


\subsection{Weakly special subvarieties}

We recall the definition of weakly special subvarieties, defined using the algebraic monodromy group $H_Z$.

\begin{mydef}[\cite{KO}, Corollary 4.14]
A closed irreducible subvariety $Z \subset S$ is called \emph{weakly special}
if it is maximal among the closed irreducible subvarieties of $S$ having the same algebraic monodromy group as $Z$.
\end{mydef}

The relevance of this definition for us is that the class of weakly special subvarieties contains all the previously defined subvarieties.

\begin{lem}\label{sabGsws}
Any special, absolutely special or $\ell$-Galois special subvariety is weakly special.
\end{lem}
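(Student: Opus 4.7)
The plan is to give a uniform argument handling the three cases in parallel, using Lemma \ref{lem} together with the containments $H_Z \subset G_Z$, $H_Z \subset G_Z^{AH}$, and $H_{\ell,Z} \subset G_{\ell,Z}$ supplied by Proposition \ref{normal}, plus the identification $H_Z \otimes_{\mathbb{Q}} \mathbb{Q}_{\ell} \cong H_{\ell,Z}$ from Proposition \ref{monodromyladic}.

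Fix a closed irreducible subvariety $Y \subset S$ containing $Z$ and satisfying $H_Y = H_Z$; the goal in each case is to conclude $Y = Z$. If $Z$ is special, then Proposition \ref{normal}(i) gives $H_Y = H_Z \subset G_Z$, so Lemma \ref{lem}(i) yields $G_Y = G_Z$, and the maximality in Definition \ref{definitionspecial}(i) forces $Y = Z$. If $Z$ is absolutely special, the same proposition gives $H_Y = H_Z \subset G_Z^{AH}$, and Lemma \ref{lem}(ii) yields $G_Y^{AH} = G_Z^{AH}$, so maximality from Definition \ref{definitionspecial}(ii) again forces $Y = Z$.

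For the $\ell$-Galois special case, I first translate the equality of algebraic monodromy groups to the $\ell$-adic side: since $H_Y = H_Z$, Proposition \ref{monodromyladic} gives $H_{\ell,Y} = H_Y \otimes_{\mathbb{Q}} \mathbb{Q}_{\ell} = H_Z \otimes_{\mathbb{Q}} \mathbb{Q}_{\ell} = H_{\ell,Z}$. Combined with Proposition \ref{normal}(ii) this yields $H_{\ell,Y} \subset G_{\ell,Z}$, so Lemma \ref{lem}(iii) gives $G_{\ell,Y} = G_{\ell,Z}$, and the maximality in Definition \ref{defGalsp} forces $Y = Z$.

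There is no real obstacle here: all the substantive input (Theorem of the fixed part, Deligne's Principle B, the splitting of the homotopy exact sequence used for the $\ell$-adic case) is already packaged inside Lemma \ref{lem}, and the translation from topological to geometric $\ell$-adic monodromy is Proposition \ref{monodromyladic}. The only mild point worth noting is that in the $\ell$-Galois case one must pass through $H_{\ell,Y}$ rather than working with $H_Y$ directly, because the hypothesis of Lemma \ref{lem}(iii) is formulated in terms of the $\ell$-adic monodromy group.
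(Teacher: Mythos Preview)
Your proof is correct and follows essentially the same approach as the paper's own proof: both pick $Y \supset Z$ with $H_Y = H_Z$, invoke Lemma~\ref{lem} to conclude equality of the relevant generic groups, and then use maximality. You are simply more explicit than the paper in checking the hypotheses of Lemma~\ref{lem} via Proposition~\ref{normal} and Proposition~\ref{monodromyladic}, which the paper leaves implicit.
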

\begin{proof}
Let $Z$ be a special (resp. absolutely special, $\ell$-Galois special) subvariety. Let $Y$ be a closed irreducible subvariety of $S$ such that $Z \subset Y$ and $H_{Y}=H_{Z}$.
By Lemma \ref{lem} this implies $G_Z = G_Y$ (resp. $G^{AH}_Z =G^{AH}_Y$, $G_{\ell, Z} = G_{\ell, Y}$). Since $Z$ is special (resp. absolutely special, $\ell$-Galois special) we conclude that $Y=Z$. This proves that $Z$ is weakly special.
\end{proof}

Note that this allows us, in principle at least, to study aspects of the geometry of $\ell$-Galois special subvarieties using methods from Hodge theory. For example, weakly special subvarieties have a nice description using the period map.
Namely, they can be understood as irreducible components of preimages under the period map of certain group-theoretically defined subspaces of the period domain.

Let
$$\Phi: S^{an} \to X:= \Gamma\backslash \mathcal{D}$$
be the period map attached to the variation of Hodge structure $\mathbb{V}$.
Here $\mathcal{D}$ is the $G_S(\mathbb{R})$-orbit of a Hodge cocharacter $h: \mathbb{S} \to G_{S,\mathbb{R}}$ and $\Gamma \subset G_S(\mathbb{Q})$ is a subgroup which contains the image of the monodromy representation. The pair $(G_S, \mathcal{D})$ is called a Hodge datum in (\cite{KO}, Section 4) and the quotient $X= \Gamma\backslash \mathcal{D}$ a Hodge variety.
A morphism of Hodge data from $ (G', \mathcal{D}')$ to $(G, \mathcal{D})$ is a group homomorphism $\iota: G' \to G$ such that $\iota(\mathcal{D}') \subset \mathcal{D}$.
Provided that $ \iota(\Gamma') \subset \Gamma$, it induces a morphism of Hodge varieties $X' = \Gamma' \backslash \mathcal{D}' \to X$.

\begin{mydef}[\cite{KO}, Definition 4.1]
Let $ X \overset{\iota}{\leftarrow} X_1 \overset{\pi}{\rightarrow} X_2$ be a diagram of morphisms of Hodge varieties and $x_2 \in X_2$. An irreducible component of $\iota(\pi^{-1}(\{x_2\}))$ is
called a \emph{weakly special subvariety} of the Hodge variety $X$.
\end{mydef}

\begin{thm}[\cite{KO}, Corollary 4.14]\label{wsperiodmap}
The weakly special subvarieties of $S$ are precisely the irreducible components of the preimages of weakly special subvarieties in $X$ under the period map.
\end{thm}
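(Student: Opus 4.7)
The plan is to establish both inclusions in Theorem \ref{wsperiodmap}: every weakly special $Z \subset S$ is an irreducible component of $\Phi^{-1}(W)$ for some weakly special $W \subset X$, and conversely every such component is weakly special in $S$. The bridge between the two notions is to recognize the algebraic monodromy group $H_Z$ as the kernel of a natural map of Hodge data, using Proposition \ref{normal}(i), which tells us that $H_Z$ is a normal subgroup of $M_Z^{\mathrm{der}}$, where $M_Z$ denotes the generic Mumford-Tate group of $Z$.

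For the forward direction, I would fix a Hodge generic lift $\tilde{z} \in \mathcal{D}$ of a point in $Z$ and consider the Mumford-Tate subdomain $\mathcal{D}_Z = M_Z(\mathbb{R}) \cdot \tilde{z}$. Using the normality of $H_Z$ in $M_Z^{\mathrm{der}}$, perform an isotypic decomposition of $M_Z^{\mathrm{ad}}$ over $\mathbb{Q}$ into $H_Z^{\mathrm{ad}}$ and a complementary normal factor $L^{\mathrm{ad}}$, then enlarge $L^{\mathrm{ad}}$ by the connected center of $M_Z$ to produce a morphism of Hodge data
$$
(M_Z, \mathcal{D}_Z) \xleftarrow{\mathrm{id}} (M_Z, \mathcal{D}_Z) \xrightarrow{\pi} (M_Z/H_Z, \mathcal{D}_Z/H_Z(\mathbb{R})).
$$
After passing to suitable neat arithmetic subgroups, this descends to a diagram $X \xleftarrow{\iota} X_1 \xrightarrow{\pi} X_2$ of Hodge varieties. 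Because the topological monodromy of $Z$ lies in $H_Z$, the composition $\pi \circ \Phi|_Z$ is constant, so $\Phi(Z)$ sits in a single fiber, giving the desired weakly special $W \subset X$. Any irreducible $Y$ with $Z \subsetneq Y$ in the same component of $\Phi^{-1}(W)$ would satisfy $H_Y \subset H_Z$, and combined with the obvious monotonicity $H_Z \subset H_Y$ this forces $H_Y = H_Z$, contradicting the weak speciality of $Z$.

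For the reverse direction, take an irreducible component $Z$ of $\Phi^{-1}(W)$ for a weakly special $W \subset X$ coming from a diagram $X \xleftarrow{\iota} X_1 \xrightarrow{\pi} X_2$. A loop in (a finite étale cover of) $Z$ lifts to a loop in the preimage of $\pi^{-1}(\{x_2\})$, so its topological monodromy lies in the kernel $K$ of the Hodge datum morphism underlying $\pi$, giving $H_Z \subset K$. Any irreducible $Y \supset Z$ with $H_Y = H_Z$ still has monodromy in $K$, so $\Phi(Y)$ is contained in a weakly special subvariety with the same projection to $X_2$, and connectedness forces $\Phi(Y) \subset W$. Since $Z$ is an irreducible component of $\Phi^{-1}(W)$, we conclude $Y = Z$, establishing that $Z$ is weakly special in $S$.

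The main obstacle is making the decomposition in the forward direction rigorous over $\mathbb{Q}$: one must verify that the complement of $H_Z^{\mathrm{ad}}$ inside $M_Z^{\mathrm{ad}}$ is itself defined over $\mathbb{Q}$, that the Hodge cocharacter factors compatibly through the almost direct product, and that the arithmetic subgroup $\Gamma$ can be chosen to respect the splitting so that the quotient $X_2$ genuinely exists as a Hodge variety. The first point relies on the nontrivial fact that $H_Z$ itself is defined over $\mathbb{Q}$, which is Andr\'e's theorem invoked in Proposition \ref{normal}(i); the remaining points are a standard but delicate exercise in manipulating Shimura/Hodge data.
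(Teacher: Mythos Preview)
The paper does not supply a proof of this theorem: it is quoted verbatim as \cite{KO}, Corollary 4.14, and is immediately followed by Example \ref{shimura} with no intervening argument. There is therefore no ``paper's own proof'' to compare your attempt against; the result is treated as an external input.

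That said, a remark on your sketch. The overall shape is what one expects from the argument in \cite{KO}: use André's normality theorem to split off $H_Z$ inside the adjoint Mumford--Tate group, build the quotient Hodge datum, and identify $Z$ with a fibre of the resulting projection. Your identification of the delicate points (rationality of the complement of $H_Z^{\mathrm{ad}}$, compatibility of the Hodge cocharacter with the almost-direct product, and the choice of $\Gamma$) is accurate.

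There is, however, a genuine gap in your reverse direction. You begin ``take an irreducible component $Z$ of $\Phi^{-1}(W)$'' and then immediately invoke $H_Z$ and the maximality condition from Definition \ref{definitionspecial}. But $\Phi$ is an analytic map to the Hodge variety $X$, so $\Phi^{-1}(W)$ is a priori only a closed analytic subspace of $S^{\mathrm{an}}$, and its irreducible components are analytic, not algebraic. The definition of weakly special in $S$ and the monodromy group $H_Z$ are formulated for closed irreducible \emph{algebraic} subvarieties, so you cannot apply them without first establishing algebraicity of $Z$. In \cite{KO} this algebraicity is not free: it is obtained by realising the weakly special subvarieties of $X$ as suitable intersections involving special subvarieties, whose preimages are algebraic by Cattani--Deligne--Kaplan (cf. Theorem \ref{AlgHodge}), together with a ``constant'' locus coming from the $X_2$-factor. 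Your argument that $H_Y = H_Z$ forces $\Phi(Y) \subset W$ is fine once $Z$ is known to be algebraic, but as written it presupposes the very conclusion one is trying to reach.
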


\begin{ex}\label{shimura}
We illustrate the situation with the description of weakly special subvarieties of Shimura varieties, cf. (\cite{KUY}, Theorem 3.1).
Let $S:=Sh_G$ be a connected Shimura variety attached to a (connected) Shimura datum $(G, \mathcal{D})$. The special subvarieties of $Sh_G$ are given by the irreducible components of images of sub-Shimura varieties $Sh_{G'}$ associated with sub-Shimura data $(G', \mathcal{D}')$.
The weakly special subvarieties can be described as follows:
Given such a sub-Shimura datum $(G', \mathcal{D}')$, let $H$ be a normal subgroup of $G'$. The quotient Shimura datum $(G'/H, \overline{\mathcal{D}'})$ gives rise to a morphism of Shimura varieties $\pi: Sh_{G'} \to Sh_{G'/H}$. For any point $x \in Sh_{G'/H}(\mathbb{C})$,
the image of $ \pi^{-1}(\{x\})$ in $Sh_G$ is a weakly special subvariety, and all weakly special subvarieties are of this form.
In the Shimura variety case, the weakly special subvarieties were described by Moonen in (\cite{MoonenLinearity}, §3) and identified with the totally geodesic subvarieties (\cite{MoonenLinearity}, Theorem 4.3).
\end{ex}

\subsection{Weakly non-factor subvarieties}

In this section we prove the equivalence of the notions of special, absolutely special and $\ell$-Galois special subvarieties for the so-called \emph{weakly non-factor} subvarieties introduced in \cite{KOU}.
The main point is that in this case all three notions are equivalent to being weakly special, a notion which relies purely on monodromy.

We recall the notion of weakly non-factor subvarieties as defined in \cite{KOU}.
\begin{mydef}[\cite{KOU}, Definition 1.10]
A closed irreducible subvariety $Z \subset S$ is called \emph{weakly non-factor} if it is not contained in a closed irreducible subvariety $Y \subset S$ such that $H_{Z}$ is a strict normal subgroup of $H_{Y}$.
\end{mydef}

\begin{prop}\label{wswnf}
Assume $Z \subset S$ is weakly special and weakly non-factor. Then $Z$ is absolutely special.
\end{prop}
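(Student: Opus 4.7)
The plan is to take an arbitrary closed irreducible $Y \supset Z$ with $G_Y^{AH} = G_Z^{AH}$ and deduce $Y = Z$ by reducing to the weakly special property through a monodromy comparison. The key input will be the normality statement in Proposition \ref{normal}(i), which puts both $H_Z$ and $H_Y$ as normal subgroups inside the common group $G_Z^{AH} = G_Y^{AH}$.

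First I would record that the inclusion $Z \hookrightarrow Y$ (after a compatible choice of base point) induces a morphism $\pi_1(Z^{an}) \to \pi_1(Y^{an})$ of topological fundamental groups through which the monodromy representation on $\mathbb{V}|_Z$ factors. Taking Zariski closures and identity components yields the containment $H_Z \subset H_Y$. Since $G_Y^{AH} = G_Z^{AH}$, Proposition \ref{normal}(i) applied to $Y$ gives $H_Y \subset G_Z^{AH}$, and applied to $Z$ gives that $H_Z$ is normal in $G_Z^{AH}$. Being normal in an ambient group is inherited by intermediate subgroups, so $H_Z$ is a normal subgroup of $H_Y$.

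Now the weakly non-factor hypothesis on $Z$ forbids the existence of a closed irreducible $Y \supset Z$ in which $H_Z$ sits as a \emph{strict} normal subgroup of $H_Y$. Combined with the previous paragraph, this forces $H_Z = H_Y$. At this point $Y$ is a closed irreducible subvariety of $S$ containing $Z$ with the same algebraic monodromy group, so the weakly special hypothesis on $Z$ gives $Y = Z$, which is exactly the maximality property characterizing absolutely special subvarieties.

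There is essentially no hard step here: the argument is a direct assembly of Proposition \ref{normal}(i), the two definitions (weakly special and weakly non-factor), and the elementary group-theoretic fact that a subgroup normal in a larger group is normal in any intermediate subgroup. The only point worth writing out carefully is the identification $H_Z \subset H_Y$ from $Z \subset Y$; once this is in place the deduction is immediate.
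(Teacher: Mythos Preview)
Your argument is correct and follows essentially the same route as the paper's proof: both use Proposition~\ref{normal}(i) to place $H_Z$ and $H_Y$ inside the common group $G_Z^{AH}=G_Y^{AH}$, deduce that $H_Z$ is normal in $H_Y$, and then invoke the weakly non-factor and weakly special hypotheses in turn. Your version is in fact slightly more explicit in justifying the containment $H_Z \subset H_Y$, which the paper leaves implicit.
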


\begin{proof}
Assume $Y \subset S$ is a closed irreducible subvariety containing $Z$ such that $G^{AH}_{Y} = G^{AH}_{Z}$. As the monodromy groups $H_Z$ and $H_{Y}$ are both normal subgroups of the common generic absolute Mumford-Tate group $G^{AH}_Y =G^{AH}_Z$, the group $H_{Z}$ is a normal subgroup of $H_{Y}$. But $Z$ is weakly non-factor by assumption, so $H_Z=H_Y$. Since $Z$ is weakly special it follows that we have the equality $Z=Y$.
\end{proof}

The same proof was used in (\cite{Abssppaper}, Theorem 3.9) to conclude that a weakly special and weakly non-factor subvariety is dR-absolutely special.

\begin{thm}\label{equiv}
For a weakly non-factor subvariety $Z$, the following are equivalent:

\begin{enumerate}[(i)]
\item \label{ws}
$Z$ is weakly special;
\item \label{s}
$Z$ is special;
\item \label{Gs}
$Z$ is $\ell$-Galois special;
\item \label{abs}
$Z$ is absolutely special.
\end{enumerate}

\end{thm}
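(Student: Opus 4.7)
The plan is to read Theorem \ref{equiv} as a short cycle of implications assembled from the preparatory results already proved, with essentially all the work concentrated in Proposition \ref{wswnf}. No new ingredients are needed.

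First I would collect the three easy implications (ii)$\Rightarrow$(i), (iii)$\Rightarrow$(i) and (iv)$\Rightarrow$(i), which are exactly the content of Lemma \ref{sabGsws}: if $Y \supset Z$ satisfies $H_Y = H_Z$, then by Proposition \ref{normal} we have $H_Y = H_Z \subset G_Z$ (resp.\ $G_Z^{AH}$, $G_{\ell,Z}$), so Lemma \ref{lem} upgrades this to an equality of the corresponding generic groups $G_Y = G_Z$ (resp.\ $G_Y^{AH} = G_Z^{AH}$, $G_{\ell,Y} = G_{\ell,Z}$), and maximality of $Z$ then forces $Y = Z$. Note that this step does not use the weakly non-factor hypothesis.

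The main step, (i)$\Rightarrow$(iv), is precisely Proposition \ref{wswnf}: given $Y \supset Z$ with $G_Y^{AH} = G_Z^{AH}$, Proposition \ref{normal} places both $H_Z$ and $H_Y$ as normal subgroups of the common generic absolute Mumford-Tate group, so $H_Z$ is normal in $H_Y$; the weakly non-factor hypothesis on $Z$ rules out strict inclusion and yields $H_Z = H_Y$, after which the weakly special hypothesis (i) forces $Y = Z$. This is the only place where the weakly non-factor assumption enters.

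Finally, Proposition \ref{absspecial} supplies the two remaining implications (iv)$\Rightarrow$(ii) and (iv)$\Rightarrow$(iii) directly, closing both cycles
$$ (\mathrm{i}) \;\Rightarrow\; (\mathrm{iv}) \;\Rightarrow\; (\mathrm{ii}) \;\Rightarrow\; (\mathrm{i}) \qquad \text{and} \qquad (\mathrm{i}) \;\Rightarrow\; (\mathrm{iv}) \;\Rightarrow\; (\mathrm{iii}) \;\Rightarrow\; (\mathrm{i}). $$
The only conceptual obstacle is that the three maximality conditions defining special, absolutely special and $\ell$-Galois special subvarieties must all be reduced to the purely monodromy-theoretic notion of weak speciality; the weakly non-factor assumption is designed so that exactly this reduction succeeds, and the reduction itself was already carried out in Proposition \ref{wswnf}.
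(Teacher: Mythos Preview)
Your proposal is correct and follows essentially the same route as the paper: reduce everything to the implication (i)$\Rightarrow$(iv) via Proposition~\ref{wswnf}, using Lemma~\ref{sabGsws} for (ii),(iii),(iv)$\Rightarrow$(i) and Proposition~\ref{absspecial} for (iv)$\Rightarrow$(ii),(iii). The only cosmetic point is that in your first paragraph the inclusion $H_Z \subset G_Z$ (resp.\ $G_Z^{AH}$, $G_{\ell,Z}$) is immediate and does not really require Proposition~\ref{normal}.
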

\begin{proof}
We already know that $(\ref{abs}) \implies (\ref{ws}), (\ref{s}), (\ref{Gs})$ and $(\ref{s}), (\ref{Gs}) \implies (\ref{ws})$. Therefore it is enough to prove $(\ref{ws}) \implies (\ref{abs})$. This is precisely Proposition \ref{wswnf}.
\end{proof}
We give some examples of weakly non-factor subvarieties that were already considered in (\cite{KOU}, Corollary 1.13).

\begin{mydef}
A closed irreducible subvariety $Z \subset S$ is called of \emph{positive period dimension} if $H_Z \not = 1$.
\end{mydef}
The subvariety $Z$ is of positive period dimension if and only if its image under the complex period map is not a point.

\begin{cor}\label{application1}
Suppose that the derived group $G_{S}^{der}$ is simple.
Let $Z \subset S$ be a strict special subvariety which is of positive period dimension, and maximal for these properties.
Then $Z$ is absolutely special.
\end{cor}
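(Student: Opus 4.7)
The plan is to show that $Z$ is weakly non-factor, since by Theorem \ref{equiv} this will immediately upgrade the hypothesis ``$Z$ is special'' to the conclusion ``$Z$ is absolutely special''. So I argue by contradiction: suppose there exists a closed irreducible $Y \subset S$ with $Z \subsetneq Y$ such that $H_Z$ is a strict normal subgroup of $H_Y$.

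First I would replace $Y$ by its special closure $Y'$, the smallest special subvariety of $S$ containing $Y$ (well-defined via Cattani--Deligne--Kaplan, or directly using Lemma \ref{lem}). The inclusion $Y \subset Y'$ of fundamental groups gives $H_Y \subset H_{Y'}$, so $H_{Y'} \supsetneq H_Z$; in particular $H_{Y'} \neq 1$, i.e.\ $Y'$ is of positive period dimension. If $Y' \subsetneq S$, then $Y'$ is a strict special subvariety of positive period dimension strictly containing $Z$, which contradicts the maximality of $Z$ among such subvarieties. Hence we must have $Y' = S$, and consequently $G_Y = G_S$.

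Now I exploit simplicity of $G_S^{\mathrm{der}}$. By Proposition \ref{normal}, $H_Y$ is a connected normal subgroup of $G_Y^{\mathrm{der}} = G_S^{\mathrm{der}}$, which is simple, so $H_Y$ is either trivial or all of $G_S^{\mathrm{der}}$. Since $H_Y \supsetneq H_Z \neq 1$ (the latter because $Z$ has positive period dimension), we conclude $H_Y = G_S^{\mathrm{der}}$. But then $H_Z$ is a connected strict normal subgroup of the simple group $G_S^{\mathrm{der}}$, forcing $H_Z = 1$, contradicting again that $Z$ has positive period dimension. This rules out the existence of $Y$ and shows $Z$ is weakly non-factor, completing the argument via Theorem \ref{equiv}.

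The only subtle point is the existence and well-behavedness of the special closure $Y'$; once this is in hand the argument is a formal consequence of the normality statement in Proposition \ref{normal} combined with simplicity of $G_S^{\mathrm{der}}$. I do not anticipate a serious obstacle, as the dichotomy ``$Y'$ strict versus $Y' = S$'' neatly splits into the two cases handled respectively by the maximality of $Z$ and by the simplicity hypothesis.
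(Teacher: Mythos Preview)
Your proof is correct and follows essentially the same approach as the paper, which simply cites \cite{KOU} for the fact that $Z$ is weakly non-factor and then invokes Theorem~\ref{equiv}. Your argument spells out that citation: it is the Hodge-theoretic analog of the proof the paper gives explicitly for Corollary~\ref{Corrolary}, with the passage to the special closure $Y'$ making precise the step ``by maximality of $Z$, $G_Y = G_S$'' that the paper states without elaboration in the $\ell$-Galois case.
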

Indeed, such a subvariety is weakly non-factor by (\cite{KOU}, proof of Corollary 1.13).

\begin{cor}\label{Corrolary}
Suppose that $G_{S}^{der}$ is simple.
Let $Z\subset S$ be a strict $\ell$-Galois special subvariety which is of positive period dimension, and maximal for these properties.
Then $Z$ is absolutely special.
\end{cor}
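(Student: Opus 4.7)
The approach is to prove that $Z$ is weakly non-factor: once this is established, Theorem \ref{equiv} immediately yields that $Z$ is absolutely special, which is the claim. The strategy mirrors that of Corollary \ref{application1}, with Hodge-theoretic inputs replaced by their $\ell$-adic counterparts.

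So suppose for contradiction that there is a closed irreducible subvariety $Y$ with $Z \subsetneq Y \subseteq S$ such that $H_Z$ is a strict normal subgroup of $H_Y$. The first step is to enlarge $Y$ to an $\ell$-Galois special subvariety. Since $S$ is Noetherian, Zorn's lemma produces a maximal closed irreducible $Y' \supseteq Y$ with $G_{\ell,Y'} = G_{\ell,Y}$, and this $Y'$ is automatically $\ell$-Galois special. Because $H_{Y'} \supseteq H_Y \supsetneq H_Z \neq 1$, the subvariety $Y'$ has positive period dimension and strictly contains $Z$; the maximality hypothesis on $Z$ then forces $Y'$ not to be strict in $S$, i.e. $Y' = S$. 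Hence $G_{\ell,Y} = G_{\ell,S}$.

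The second step is a normality transfer from $\ell$-adic to topological monodromy. By Proposition \ref{normal}(\ref{b}), $H_{\ell,Y}$ is normal in $G_{\ell,Y} = G_{\ell,S}$; since $H_{\ell,Y} \subseteq H_{\ell,S} \subseteq G_{\ell,S}$, it is in particular normal in $H_{\ell,S}$. The comparison isomorphism $H_{\ell,\cdot} \cong H_{\cdot}\otimes \mathbb{Q}_{\ell}$ of Proposition \ref{monodromyladic} then yields that $H_Y$ is a connected normal subgroup of $H_S$. Now I invoke simplicity of $G_S^{\mathrm{der}}$: by Proposition \ref{normal}(i), $H_S$ is a connected normal subgroup of $G_S^{\mathrm{der}}$, nontrivial because it contains $H_Z \neq 1$, so $H_S = G_S^{\mathrm{der}}$; consequently $H_Y$ is a nontrivial connected normal subgroup of the simple group $G_S^{\mathrm{der}}$, forcing $H_Y = G_S^{\mathrm{der}}$; and therefore $H_Z$ must be a strict connected normal subgroup of the simple group $G_S^{\mathrm{der}}$, whence $H_Z = 1$, contradicting the positive-period-dimension assumption on $Z$.

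The only genuinely new ingredient compared with the Hodge-theoretic proof of Corollary \ref{application1} is the passage from $\ell$-adic to topological monodromy in the second step, and this is precisely what Proposition \ref{monodromyladic} is designed to provide; the core of the argument remains the simplicity of $G_S^{\mathrm{der}}$. I therefore do not anticipate any real obstacle beyond bookkeeping.
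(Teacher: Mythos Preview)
Your proof is correct and follows essentially the same strategy as the paper's: show $Z$ is weakly non-factor by deducing $G_{\ell,Y}=G_{\ell,S}$ from maximality, then use normality of $H_{\ell,Y}$ in $G_{\ell,S}$ together with Proposition~\ref{monodromyladic} and simplicity of $G_S^{\mathrm{der}}$ to reach a contradiction. Your version is simply more explicit about the enlargement of $Y$ to an $\ell$-Galois special $Y'$ (the paper compresses this into the phrase ``by the maximality of $Z$''), and you derive the contradiction as $H_Z=1$ rather than $H_Z=H_Y$, which is an inessential variation.
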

\begin{proof}
As $Z$ is $\ell$-Galois special, in view of Theorem \ref{equiv} it suffices to show that $Z$ is weakly non-factor. Let $Y \supset Z$ be a closed irreducible subvariety such that $H_{ Z}$ is a strict normal subgroup of $H_{Y}$. By the maximality of $Z$, we have the equality $G_{\ell,Y} = G_{\ell, S}$. Thus $H_{\ell,Y}$ is a normal subgroup of $G_{\ell, S}$, and therefore also of $H_{\ell,S}$.
By Proposition \ref{monodromyladic} we know that $H_Y$ is a normal subgroup of $H_S$, which is simple by Proposition \ref{normal}.
As $H_{Z}$ is non-trivial, it follows that $H_{Z}=H_{Y}= H_S$, which contradicts the assumption that $H_{Z}$ is a strict normal subgroup of $H_{Y}$. Therefore $Z$ is weakly non-factor, as desired.
\end{proof}

\section{Applications to the Mumford-Tate conjecture}

We give a few instances of the interaction of the concept of $\ell$-Galois special subvarieties with the Mumford-Tate conjecture.
First we show that the $\ell$-Galois exceptional locus is a countable union of algebraic subvarieties defined over $\bar{K}$, and its part of positive period dimension coincides with the Hodge locus of positive period dimension provided that $G_S^{der}$ is simple.
Together with the results of \cite{BKU}, we use this to prove that the absolute Mumford-Tate conjecture holds for many smooth projective hypersurfaces defined over finitely generated transcendental extensions of $\mathbb{Q}$. Secondly, we outline how the question of determining the $\ell$-Galois special points in the moduli space of (principally polarized) abelian varieties is equivalent to the Mumford-Tate conjecture for abelian varieties.

\subsection{Exceptional loci of positive period dimension}

We begin by defining the following exceptional loci:

\begin{mydef} \textnormal{ }
\begin{enumerate}[(i)]
\item
The \emph{Hodge locus} $HL$ of $S$ is the set of $s \in S(\mathbb{C})$ such that $G_s \subsetneq G_S$.
\item
The \emph{absolute Hodge locus} $AHL$ is the set of all $s \in S(\mathbb{C})$ such that $G_s^{AH} \subsetneq G_S^{AH}$.
\item
The \emph{$\ell$-Galois exceptional locus} $\ell-GalL$ is the set of all $s \in S(\mathbb{C})$ such that $G_{\ell, s} \subsetneq G_{\ell,S}$.
\end{enumerate}
\end{mydef}

\begin{prop}\label{lociunion}
The Hodge locus $HL$ (resp. $AHL$, $\ell-GalL$) is the union of all strict special (resp. strict absolutely special, strict $\ell$-Galois special) subvarieties of $S$.
\end{prop}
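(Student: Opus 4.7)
The plan is to prove the three statements of the proposition by a single uniform argument: in each case the claim is a formal consequence of the definition of the relevant maximal subvariety, combined with a ``monotonicity at a point'' statement. The monotonicity I plan to use is that for any subvariety $Y \subset S$ and any point $s \in Y(\mathbb{C})$,
\[ G_s \subset G_Y, \qquad G_s^{AH} \subset G_Y^{AH}, \qquad G_{\ell,s} \subset G_{\ell,Y}. \]
The first two follow immediately from the definitions of $\mathcal{H}_Y$ and $\mathcal{AH}_Y$: a tensor that extends to a generic Hodge (resp.\ absolute Hodge) section on $Y$ is in particular Hodge (resp.\ absolute Hodge) at $s$, hence fixed by $G_s$ (resp.\ $G_s^{AH}$). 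The third is the splitting argument from the proof of Lemma~\ref{lem}(\ref{c}): after choosing a finitely generated common field of definition $L$ with $s \in Y(L)$, the étale homotopy exact sequence of $Y_L$ splits and shows that $G_{\ell,Y}$ is generated by $H_{\ell,Y}$ and $G_{\ell,s}$, so in particular contains $G_{\ell,s}$.

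For the inclusion of the union of strict special subvarieties into $HL$, let $Z$ be strict special. If we had $G_Z = G_S$, the maximality in the definition of special, applied to $Z \subset S$, would force $Z = S$, contradicting strictness; hence $G_Z \subsetneq G_S$. For any $s \in Z$, monotonicity then gives $G_s \subset G_Z \subsetneq G_S$, so $s \in HL$. The absolutely special and $\ell$-Galois special cases are handled identically after substituting $G^{AH}$ or $G_\ell$ throughout.

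For the reverse inclusion, given $s \in HL$, so that $G_s \subsetneq G_S$, I will consider the collection of closed irreducible subvarieties $Y \subset S$ with $s \in Y$ and $G_Y = G_s$. This collection is nonempty (it contains $\{s\}$) and of bounded dimension, so I can pick an element $Z$ of maximal dimension. Then $Z$ is strict, because $G_Z = G_s \neq G_S$, and $Z$ is special: indeed, for any $Z' \supset Z$ with $G_{Z'} = G_Z = G_s$, the subvariety $Z'$ also lies in the collection, so the dimension maximality forces $Z' = Z$. Exactly the same construction with $G$ replaced by $G^{AH}$ (resp.\ $G_\ell$) produces a strict absolutely special (resp.\ strict $\ell$-Galois special) subvariety through $s$.

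The only mildly delicate point I expect is in the $\ell$-Galois case: a point $s \in S(\mathbb{C})$ need not be algebraic over $K$. However, $s$ is always defined over the finitely generated extension $L = K(s) \subset \mathbb{C}$, so $G_{\ell,s}$ is well-defined and the splitting argument applies verbatim; no genuine obstacle arises. The whole proposition is then a formal unwinding of the three definitions of maximality in combination with the three pointwise group inclusions.
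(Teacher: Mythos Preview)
Your proposal is correct and follows essentially the same approach as the paper's proof. The paper's argument is extremely terse (it only writes out the inclusion $\ell\text{-}GalL \subset \bigcup Z$ by picking a maximal $Z$ through $s$ with $G_{\ell,Z}=G_{\ell,s}$, and leaves the other inclusion and the other two cases implicit), while you spell out both inclusions and supply the monotonicity statements $G_s \subset G_Y$, $G_s^{AH} \subset G_Y^{AH}$, $G_{\ell,s} \subset G_{\ell,Y}$ that make the ``union $\subset$ locus'' direction work.
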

\begin{proof}
We prove the claim for $\ell-GalL$.
Let $s \in \ell-GalL$. Choose a maximal closed irreducible subvariety $Z \subset S$ containing $s$ which satisfies $G_{\ell,Z}=G_{\ell,s}$.
Then $Z$ is an $\ell$-Galois special subvariety by the very definition.
\end{proof}

\begin{thm}\label{GalLcountable}
The $\ell$-Galois exceptional locus $\ell-GalL$ is a countable union of closed algebraic subvarieties of $S$ defined over $\bar{K}$, and stable under the action of $\mathrm{Gal}(\bar{K}/K)$.
\end{thm}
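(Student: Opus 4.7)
The plan is to combine Proposition \ref{lociunion} with Corollary \ref{fodgalsp} and a standard countability argument, since all of the substantive work has already been carried out in Corollary \ref{fodgalsp}.

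More precisely, by Proposition \ref{lociunion}, the locus $\ell-GalL$ is the union of all strict $\ell$-Galois special subvarieties $Z \subset S$. By Corollary \ref{fodgalsp}, each such $Z$ is defined over $\bar{K}$, and the set of $\ell$-Galois special subvarieties is stable under the action of $\mathrm{Gal}(\bar{K}/K)$. This immediately yields both the defined-over-$\bar{K}$ statement and the Galois stability. It remains only to establish countability.

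For countability I would argue as follows. Since $K$ is finitely generated over $\mathbb{Q}$, both $K$ and its algebraic closure $\bar{K}$ are countable fields. The closed $\bar{K}$-subschemes of the quasi-projective $\bar{K}$-variety $S_{\bar{K}}$ are parametrized by $\bar{K}$-points of the Hilbert scheme $\mathrm{Hilb}(S_{\bar{K}})$, which is a disjoint union (over Hilbert polynomials) of countably many schemes of finite type over $\bar{K}$. As $\bar{K}$ is countable, each such component has only countably many $\bar{K}$-points, and so the set of closed $\bar{K}$-subvarieties of $S_{\bar{K}}$ is countable. Using that an irreducible closed $\bar{K}$-subvariety of $S_{\bar{K}}$ remains irreducible after base change to $\mathbb{C}$ (because $\bar{K}$ is algebraically closed), Corollary \ref{fodgalsp} identifies the set of $\ell$-Galois special subvarieties with a subset of this countable collection. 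Therefore $\ell-GalL$ is a countable union of closed algebraic subvarieties of $S$, each defined over $\bar{K}$ and with Galois-conjugate again in the collection.

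There is no substantial obstacle: once Corollary \ref{fodgalsp} is in hand, the statement is essentially formal. The only point requiring any care is making sure that the notion of subvariety used in this paper (irreducible closed complex subvariety, as specified in the Notations) matches up correctly with ``defined over $\bar{K}$,'' but this is handled by the fact that $\bar{K}$-irreducibility descends to $\mathbb{C}$-irreducibility over the algebraically closed base $\bar{K}$.
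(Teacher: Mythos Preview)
Your proof is correct and follows essentially the same approach as the paper: invoke Proposition~\ref{lociunion} to write the locus as a union of $\ell$-Galois special subvarieties, apply Corollary~\ref{fodgalsp} for the field of definition and Galois stability, and conclude countability from the countability of $\bar{K}$. Your Hilbert-scheme justification of the last step is more detailed than the paper's one-line assertion, but the argument is the same.
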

\begin{proof}
By Proposition \ref{lociunion}, the locus $\ell-GalL$ is the union of all $\ell$-Galois special subvarieties, which are closed algebraic subvarieties defined over $\bar{K}$ by Corollary \ref{fodgalsp}. The same Corollary also shows that the union is stable under the action of the absolute Galois group of $K$. The Theorem follows from the fact that there are only countably many closed algebraic subvarieties of $S$ defined over the countable field $\bar{K}$.
\end{proof}

Proposition \ref{absspecial} shows that $AHL \subset HL$ and $AHL \subset \ell-GalL$. The absolute Hodge conjecture and the absolute Mumford-Tate conjecture imply that these inclusions are in fact equalities.
We prove that this is indeed the case when we restrict to the "positive dimensional" part of these loci and the derived group $G_S^{der}$ is simple.

Denote by $HL_{pos}$ (resp. $AHL_{pos}$, $\ell-GalL_{pos}$) the union of all strict special (resp. strict absolutely special, strict $\ell$-Galois special) subvarieties $Z$ of $S$ which are of positive period dimension.

\begin{thm}\label{loci}
Suppose that $G_S^{der}$ is simple.
Then $$HL_{pos} = AHL_{pos} = \ell-GalL_{pos}.$$
\end{thm}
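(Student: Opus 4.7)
The plan is to deduce both equalities directly from the two corollaries of Theorem \ref{equiv} established above (Corollaries \ref{application1} and \ref{Corrolary}), combined with Proposition \ref{absspecial} and a Noetherian maximality argument.

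First I would dispose of the two easy inclusions $AHL_{pos} \subset HL_{pos}$ and $AHL_{pos} \subset \ell-GalL_{pos}$. By Proposition \ref{absspecial}(\ref{both}), every strict absolutely special subvariety $Z \subset S$ is in particular strict special and strict $\ell$-Galois special; it is of positive period dimension in all three senses since positive period dimension is defined purely in terms of $H_Z$. Taking unions over all such $Z$ yields both inclusions.

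For the reverse inclusion $HL_{pos} \subset AHL_{pos}$, I would start from a point $s \in HL_{pos}$, which by Proposition \ref{lociunion} lies in some strict special subvariety $Z_0 \ni s$ of positive period dimension. Since $S$ is Noetherian, any ascending chain of closed subvarieties stabilizes, so the poset of strict special subvarieties of $S$ of positive period dimension containing $Z_0$, ordered by inclusion, has a maximal element $Z$. I claim that $Z$ is maximal among \emph{all} strict special subvarieties of $S$ of positive period dimension: any strictly larger $Z' \supsetneq Z$ in that class would in particular contain $Z_0$, contradicting the choice of $Z$. Now Corollary \ref{application1}, which uses the hypothesis that $G_S^{der}$ is simple, shows that $Z$ is absolutely special. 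Since $Z \subsetneq S$ and is of positive period dimension, $s \in Z \subset AHL_{pos}$, as desired.

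The inclusion $\ell-GalL_{pos} \subset AHL_{pos}$ is proved in exactly the same way, this time invoking Corollary \ref{Corrolary} in place of Corollary \ref{application1}: given $s \in \ell-GalL_{pos}$, one enlarges a strict $\ell$-Galois special subvariety containing $s$ (of positive period dimension) to one that is maximal among such, and concludes that it is absolutely special. The only genuine input of the proof is thus the two corollaries already at hand; the potential subtlety to check is simply that the Noetherian enlargement stays within the class of strict subvarieties of positive period dimension, which is automatic since a subvariety $Z' \supsetneq Z$ still has $H_{Z'} \supset H_Z \neq 1$ and one can stop short of $S$ itself. I do not expect any serious obstacle here; the real work has already been done in establishing Theorem \ref{equiv} and its corollaries.
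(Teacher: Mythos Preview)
Your proposal is correct and follows essentially the same route as the paper's proof: the easy inclusions come from Proposition \ref{absspecial}(\ref{both}), and the reverse inclusions reduce to Corollaries \ref{application1} and \ref{Corrolary} applied to a maximal strict (special, resp.\ $\ell$-Galois special) subvariety of positive period dimension. The only difference is that you spell out the Noetherian argument guaranteeing the existence of such a maximal $Z$ through a given point, whereas the paper simply takes this for granted; your observation that $H_Z \subset H_{Z'}$ for $Z \subset Z'$, ensuring positive period dimension is preserved along the chain, is correct but in fact not even needed, since the Noetherian maximal element is automatically one of the members of the collection. One cosmetic point: the fact that $s \in HL_{pos}$ lies in some strict special subvariety of positive period dimension is the \emph{definition} of $HL_{pos}$ rather than Proposition \ref{lociunion} (which concerns the full loci $HL$, $AHL$, $\ell-GalL$).
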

\begin{proof}
Proposition \ref{absspecial} tells us that $AHL_{pos}$ is contained in both $\ell-GalL_{pos}$ and $HL_{pos}$.
Now let $Z$ be a strict special subvariety that is maximal and of positive period dimension. Then Corollary \ref{application1} implies that $Z$ is in fact absolutely special. We conclude that $HL_{pos} \subset AHL_{pos}$.
Similarly, a maximal strict $\ell$-Galois special subvariety of positive period dimension is absolutely special (Corollary \ref{Corrolary}).
This shows the inclusion $\ell-GalL_{pos} \subset AHL_{pos}$.
\end{proof}

As a consequence, we see that in the situation of Theorem \ref{loci} the $\ell$-Galois exceptional locus $\ell-GalL_{pos}$ of positive period dimension is independent of $\ell$ as predicted by the Mumford-Tate conjecture, but this is not at all clear for the full $\ell$-Galois exceptional locus.
\begin{rem}
Denote by $WS_{pos}$ the union of all strict weakly special subvarieties of $S$ of positive period dimension.
Then one sees that assuming the simpleness of $G_S^{der}$, the above loci are also equal to $WS_{pos}$. The equality $HL_{pos} = WS_{pos}$ is already used crucially in \cite{KO}.
\end{rem}

Theorem \ref{loci} allows us to transfer results on the Hodge locus $HL_{pos}$ of positive period dimension to the $\ell$-Galois exceptional locus of positive period dimension.
For example, by (\cite{KO}, Theorem 1.5) the subset $HL_{pos}$ is either Zariski-dense or a finite union of special subvarieties.
These special subvarieties are maximal, and hence absolutely special by Corollary \ref{application1}.
We obtain that $\ell-GalL_{pos}$ is either Zariski-dense or a finite union of $\ell$-Galois special subvarieties.

The \emph{level} of the variation of Hodge structure $\mathbb{V}$ as defined in (\cite{BKU}, Definition 3.13) is the maximal number $ m $ such that the Hodge structure of weight zero on the Lie algebra of the adjoint generic Mumford-Tate group $G_S^{ad}$ has non-trivial $(-m,m)$-part. In fact, if this Hodge structure is not irreducible then one needs a slightly refined definition that can be found in (\cite{BKU}, Definition 3.12). Baldi-Klingler-Ullmo proved that if $G_S^{der}$ is simple and the level of $\mathbb{V}$ is greater or equal to three, then the Hodge locus $HL_{pos}$ of positive period dimension is a finite union of special subvarieties.
Using this, we obtain the following Corollary to Theorem \ref{loci}:

\begin{cor}\label{ellGalLfiniteunion}
Suppose that $G_S^{der}$ is simple and the level of the variation of Hodge structure $\mathbb{V}$ is greater or equal to three.
Then $\ell-GalL_{pos}$ is a finite union of $\ell$-Galois special subvarieties.
\end{cor}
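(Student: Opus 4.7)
The plan is to combine Theorem \ref{loci} with the Baldi-Klingler-Ullmo finiteness theorem and Corollary \ref{application1}, treating this as a clean assembly of already-established ingredients. The point is that under the simpleness of $G_S^{der}$, the three loci $HL_{pos}$, $AHL_{pos}$, and $\ell-GalL_{pos}$ coincide as sets, and the BKU hypothesis (level $\ge 3$) controls the Hodge side.

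First I would apply Theorem \ref{loci}: since $G_S^{der}$ is simple by assumption, one has the equality
$$\ell-GalL_{pos} = HL_{pos}$$
as subsets of $S(\mathbb{C})$. Next, invoking the Baldi-Klingler-Ullmo result cited in the paragraph preceding the statement, the hypothesis on the level of $\mathbb{V}$ together with the simpleness of $G_S^{der}$ ensures that $HL_{pos}$ is a \emph{finite} union $Z_1 \cup \dots \cup Z_r$ of strict special subvarieties of positive period dimension. Without loss of generality I may take the $Z_i$ to be the maximal such special subvarieties appearing in $HL_{pos}$, since every strict special subvariety of positive period dimension is contained in a maximal one of this form.

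The last step is to upgrade from "special" to "$\ell$-Galois special." For each $i$, the subvariety $Z_i$ is a strict special subvariety of positive period dimension that is maximal for these two properties, so Corollary \ref{application1} applies and $Z_i$ is absolutely special. Proposition \ref{absspecial}(i) then guarantees that each $Z_i$ is in particular $\ell$-Galois special. Hence the equality
$$\ell-GalL_{pos} = Z_1 \cup \dots \cup Z_r$$
exhibits $\ell-GalL_{pos}$ as a finite union of $\ell$-Galois special subvarieties, as required.

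There is no real obstacle here; the only care needed is bookkeeping the maximality condition so that Corollary \ref{application1} is legitimately applicable to each $Z_i$. In particular one should verify that the finitely many special subvarieties extracted from \cite{BKU} can be arranged to be maximal (among strict special subvarieties of positive period dimension), which is immediate because the BKU statement is itself a finiteness of maximal such loci.
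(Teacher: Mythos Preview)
Your proof is correct and follows essentially the same route as the paper: use Theorem \ref{loci} to identify $\ell-GalL_{pos}$ with $HL_{pos}$, invoke \cite{BKU} to write $HL_{pos}$ as a finite union of maximal strict special subvarieties of positive period dimension, and then apply Corollary \ref{application1} and Proposition \ref{absspecial} to see that each of these is $\ell$-Galois special. The paper sketches exactly this argument in the discussion preceding the corollary.
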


\subsection{Absolute Mumford-Tate conjecture for projective hypersurfaces}

We show how the results on the $\ell$-Galois exceptional locus can be applied in suitable situations to prove that many members of a family satisfy the (absolute) Mumford-Tate conjecture.

\begin{mydef}
We say that the variation of Hodge structure $\mathbb{V}$ satisfies the \emph{Torelli property} if every closed irreducible subvariety $Z \subset S$ with $H_Z=1$ is a point.
\end{mydef}
\begin{rem}
The Torelli property can be rephrased as saying that the complex period map does not contract a positive dimensional subvariety to a point. In particular, it is satisfied if the period map satisfies an infinitesimal Torelli theorem, since the period map is an immersion in this case.
\end{rem}

\begin{thm}\label{mtmany} \textnormal{ }
\begin{enumerate}[(i)]
\item
Suppose $G_{\ell,S} = G_S \otimes \mathbb{Q}_{\ell}$. Then the Mumford-Tate conjecture holds for a very general complex point of $S$, i.e. away from a countable union of closed algebraic subvarieties.
\item \label{mtdense}
Suppose that $G_{\ell,S} = G_S^{AH} \otimes \mathbb{Q}_{\ell} $, and the derived group $G_S^{der}$ is simple.
We assume furthermore that the level of the variation of Hodge structure $\mathbb{V}$ is at least $3$, and that it satisfies the Torelli property. Then there exists a dense open $K$-subvariety $U \subset S$ such that the absolute Mumford-Tate conjecture holds true for every $x \in U(\mathbb{C}) \setminus U(\bar{K})$.
\end{enumerate}
\end{thm}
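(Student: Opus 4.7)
For part (i), the plan is the simplest possible ``very general'' argument. I would take
\[ V \;:=\; HL \,\cup\, \ell-GalL, \]
which by the Cattani--Deligne--Kaplan algebraicity theorem for the Hodge locus and by Theorem \ref{GalLcountable} is a countable union of proper closed algebraic subvarieties of $S$. For every $x \in S(\mathbb{C}) \setminus V$ one has $G_x = G_S$ and $G_{\ell, x} = G_{\ell, S}$ by the very definitions of $HL$ and $\ell-GalL$, so the hypothesis $G_{\ell,S} = G_S \otimes \mathbb{Q}_\ell$ at once gives
\[ G_{\ell, x} = G_{\ell, S} = G_S \otimes \mathbb{Q}_\ell = G_x \otimes \mathbb{Q}_\ell, \]
which is Conjecture \ref{mtconjecture} for the fiber $\mathcal{X}_x$.

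For part (ii), I would first build the open set $U$. The hypotheses on $G_S^{der}$ and on the level of $\mathbb{V}$ are exactly those of Corollary \ref{ellGalLfiniteunion}, so $\ell-GalL_{pos}$ is a \emph{finite} union of $\ell$-Galois special subvarieties. Each of these is defined over $\bar{K}$ and the collection is $\mathrm{Gal}(\bar K/K)$-stable by Corollary \ref{fodgalsp}, and a finite Galois-stable union of $\bar K$-subvarieties descends to a closed proper $K$-subvariety of $S$. Setting $U := S \setminus \ell-GalL_{pos}$ will thus produce a dense open $K$-subvariety of $S$.

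The core of the argument is to show that for every $x \in U(\mathbb{C}) \setminus U(\bar K)$ one has both $G_{\ell, x} = G_{\ell, S}$ and $G_x^{AH} = G_S^{AH}$. I would treat the $\ell$-adic side first: let $Z \subset S$ be a closed irreducible subvariety containing $x$ which is maximal for the property $G_{\ell, Z} = G_{\ell, x}$, so that $Z$ is $\ell$-Galois special. If $Z \subsetneq S$, then $Z$ cannot have positive period dimension, because otherwise $Z \subset \ell-GalL_{pos}$, contradicting $x \in U$. Hence $H_Z = 1$, and the Torelli hypothesis forces $Z = \{x\}$; but then $Z$ is defined over $\bar K$ by Corollary \ref{fodgalsp}, contradicting $x \notin U(\bar K)$. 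Therefore $Z = S$ and $G_{\ell, x} = G_{\ell, S}$. The argument on the Hodge side will be entirely parallel: if $W$ is a maximal closed irreducible subvariety containing $x$ with $G_W^{AH} = G_x^{AH}$ then $W$ is absolutely special, and using Theorem \ref{loci} (which identifies $AHL_{pos}$ with $\ell-GalL_{pos}$) together with the Torelli property and the fact (Proposition \ref{absspecial}) that absolutely special subvarieties are defined over $\bar K$, the same two-case analysis yields $W = S$ and $G_x^{AH} = G_S^{AH}$. Combining these with the standing hypothesis $G_{\ell,S} = G_S^{AH} \otimes \mathbb{Q}_\ell$ will give
\[ G_{\ell, x} = G_{\ell, S} = G_S^{AH} \otimes \mathbb{Q}_\ell = G_x^{AH} \otimes \mathbb{Q}_\ell, \]
which is the absolute Mumford--Tate conjecture for $x$.

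The main subtlety, and the reason the Torelli hypothesis enters, will lie in this final step: the open set $U$ removes only the positive-period-dimensional parts of the exceptional loci, so one must independently rule out that $x$ lies on a strict $\ell$-Galois special or absolutely special subvariety of zero period dimension. The combination ``defined over $\bar K$'' (from Corollary \ref{fodgalsp} and Proposition \ref{absspecial}) together with the Torelli property is precisely what converts any such potential zero-period-dimensional exception through $x$ into a point of $U(\bar K)$, producing the required contradiction.
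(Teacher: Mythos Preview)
Your proof is correct and follows essentially the same approach as the paper. Part (i) is identical. For part (ii), your construction of $U$ and your argument that $G_{\ell,x} = G_{\ell,S}$ match the paper exactly; the only difference is that you run a second, parallel argument (via absolutely special subvarieties and Theorem \ref{loci}) to obtain $G_x^{AH} = G_S^{AH}$, whereas the paper skips this by simply observing that the chain of inclusions
\[
G_{\ell,x} \subset G_x^{AH} \otimes \mathbb{Q}_\ell \subset G_S^{AH} \otimes \mathbb{Q}_\ell
\]
together with $G_{\ell,x} = G_S^{AH} \otimes \mathbb{Q}_\ell$ already forces $G_{\ell,x} = G_x^{AH} \otimes \mathbb{Q}_\ell$. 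Your extra step is valid but unnecessary.
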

\begin{proof}
\begin{enumerate}[(i)]
\item
By the Theorem of Cattani-Deligne-Kaplan (cf. \cite{CDK}) and Theorem \ref{GalLcountable}, both the Hodge locus and the $\ell$-Galois exceptional locus form a countable union of closed algebraic subvarieties of $S$. For any $x \in S(\mathbb{C}) \setminus \left(HL \cup \ell-GalL \right)$ we have the equality
$$G_{\ell,x}=G_{\ell,S} = G_S \otimes \mathbb{Q}_{\ell}=G_x \otimes \mathbb{Q}_{\ell}.$$
\item
Under the assumption that $G_S^{der}$ is simple, it follows from Corollary \ref{ellGalLfiniteunion} that the $\ell$-Galois exceptional locus $\ell-GalL_{pos} $ of positive period dimension is a strict closed algebraic subvariety of $S$ provided that the variation $\mathbb{V}$ has level $\ge 3$. It is moreover defined over $K$ by Corollary \ref{fodgalsp}.
If we let $U \subset S$ be the dense open $K$-subvariety defined as the complement of $\ell-GalL_{pos}$, it follows from the Torelli property that every point $x \in U(\mathbb{C})$ which is not $\ell$-Galois generic in $S$ is an $\ell$-Galois special point, and thus defined over $\bar{K}$ by Corollary \ref{fodgalsp}.
Consequently every $x \in U(\mathbb{C}) \setminus U(\bar{K})$ satisfies
$$G_{\ell,x} = G_{\ell,S} = G_{S}^{AH} \otimes \mathbb{Q}_{\ell}.$$
The inclusions $G_{\ell,x} \subset G_x^{AH} \otimes \mathbb{Q}_{\ell}\subset G_S^{AH} \otimes \mathbb{Q}_{\ell}$ force the equality $G_{\ell,x} = G_x^{AH} \otimes \mathbb{Q}_{\ell}$.
\end{enumerate}
\end{proof}

\begin{rem}\textnormal{ }
\begin{enumerate}[(i)]
\item
More precisely, Theorem \ref{mtmany}(\ref{mtdense}) proves that all points $x \in U(\mathbb{C}) \setminus U(\bar{K})$ are $\ell$-Galois generic in $S$. This it not too surprising since it is conjectured (\cite{BKU}, Conjecture 2.5) that the Hodge locus is an algebraic subvariety of $S$ once the level is greater or equal to three. In particular, the Hodge generic points (and thus conjecturally also the $\ell$-Galois generic points) should form a dense open subset of $S$.
\item
Replacing the first condition in Theorem \ref{mtmany}(\ref{mtdense}) by the condition $G_{\ell,S}=G_S \otimes \mathbb{Q}_{\ell}$, the same argument shows that the Mumford-Tate conjecture holds for all but countably many points $x \in U(\mathbb{C})$.
We can not argue that these countably many points are defined over $\bar{K}$ because of our lack of knowledge about the fields of definition of special points.
\item
Arguing as in (\cite{Moonen}, Theorem 4.3.8), the assumption $G_{\ell,S} = G_S^{AH} \otimes \mathbb{Q}_{\ell}$ (respectively $G_{\ell,S} = G_S \otimes \mathbb{Q}_{\ell}$) is satisfied once the absolute Mumford-Tate conjecture (respectively the Mumford-Tate conjecture) holds for a single point $s \in S(\mathbb{C})$. In other words, the above theorem shows that if these conjectures hold for one point, they hold for many points.
\end{enumerate}
\end{rem}

We give an example to illustrate the scope of the preceding Theorem.
Denote by $\mathcal{M}_{d,n}$ the moduli space of smooth projective hypersurfaces of degree $d$ in $\mathbb{P}^{n+1}$. More precisely, we should add a level structure in order to obtain a fine moduli space defined over $\mathbb{Q}$, but we will ignore this issue here.

\begin{cor}\label{corprojhyp}
Assume that $n \ge 3$, $d \ge 5$, and $(n,d) \not= (4,5)$. There is a dense open $\mathbb{Q}$-subvariety $U \subset \mathcal{M}_{d,n}$ such that the absolute Mumford-Tate conjecture for $H^n_{\mathrm{prim}}$ holds for all $x \in U(\mathbb{C}) \setminus U(\bar{\mathbb{Q}})$.
\end{cor}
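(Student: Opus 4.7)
The plan is to apply Theorem \ref{mtmany}(\ref{mtdense}) to $S = \mathcal{M}_{d,n}$ (introducing a level structure to obtain a fine moduli space over $\mathbb{Q}$ and carrying the universal family $f \colon \mathcal{X} \to S$), with $K = \mathbb{Q}$ and variation of Hodge structure $\mathbb{V} = R^n f_* \mathbb{Q}_{\mathrm{prim}}$. Thus it suffices to verify the four hypotheses of that theorem: (a) $G_S^{der}$ is simple, (b) $G_{\ell,S} = G_S^{AH} \otimes \mathbb{Q}_{\ell}$, (c) the level of $\mathbb{V}$ is at least $3$, and (d) $\mathbb{V}$ satisfies the Torelli property.

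For (a), the generic Mumford–Tate group at a Hodge-generic hypersurface is classically known: depending on the parity of $n$, $G_S^{der}$ is either $\mathrm{Sp}$ or $\mathrm{Spin}$ of the appropriate primitive cohomology, which is simple for $d \ge 3$ and $n \ge 1$. For (b), we invoke the final remark following Theorem \ref{mtmany}: it suffices to exhibit one point $s \in S(\mathbb{C})$ where the absolute Mumford–Tate conjecture holds. The Fermat hypersurface $\{X_0^d + \cdots + X_{n+1}^d = 0\}$ is of CM type (its primitive cohomology decomposes, after pullback to a suitable cyclotomic cover, into pieces of CM abelian varieties), so the Mumford–Tate conjecture holds for it by the results of Pohlmann and Shioda, and for a CM motive the Hodge classes are automatically absolute Hodge, hence $G_s = G_s^{AH}$ and absolute Mumford--Tate holds at $s$.

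For (c) and (d), we use the Hodge numbers of $H^n_{\mathrm{prim}}$ of a smooth degree-$d$ hypersurface in $\mathbb{P}^{n+1}$, computable from Griffiths' Jacobian ring description. The level of the variation coincides (up to the correction in \cite{BKU}, Definition 3.12) with the width $n - 2 p_{\min}$, where $p_{\min}$ is the smallest $p$ with $h^{p,n-p}_{\mathrm{prim}} \neq 0$. A short numerical check shows that for $n \ge 3$, $d \ge 5$ and $(n,d) \neq (4,5)$ one has $h^{n,0}_{\mathrm{prim}} \ge 1$ and the spread of nonzero Hodge numbers is large enough that the level of $\mathbb{V}$ is $\ge 3$; the single case $(n,d) = (4,5)$ is exactly where this bound fails. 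The Torelli property follows from the infinitesimal Torelli theorem for hypersurfaces, proved by Griffiths, Donagi and Carlson–Green–Griffiths–Harris: for $d \ge 3$, $n \ge 2$ and $(n,d) \neq (2,3)$, the differential of the period map is injective at every smooth point, so in particular no positive-dimensional subvariety of $S$ has trivial algebraic monodromy.

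The main obstacle is the bookkeeping in step (c): verifying that the level is $\ge 3$ precisely under the hypotheses $n \ge 3$, $d \ge 5$, $(n,d) \neq (4,5)$ requires a careful case distinction using Griffiths' formula for $h^{p,q}_{\mathrm{prim}}$ as coefficients of $\prod_i \tfrac{1 - t^{(d-1)(n+2-i)}}{1 - t^{n+2-i}}$-type generating series, and identifying $(4,5)$ as the unique exceptional small case. Once all four hypotheses are verified, Theorem \ref{mtmany}(\ref{mtdense}) directly produces the desired dense open $\mathbb{Q}$-subvariety $U \subset S$ such that the absolute Mumford–Tate conjecture for $H^n_{\mathrm{prim}}$ holds at every $x \in U(\mathbb{C}) \setminus U(\bar{\mathbb{Q}})$.
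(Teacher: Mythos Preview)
Your overall strategy---reduce to Theorem \ref{mtmany}(\ref{mtdense}) and verify its four hypotheses---is exactly the paper's. The treatment of (c) and (d) is also essentially the same (the paper simply cites \cite{BKU}, 7.2 for the level computation and Griffiths/\cite{Flenner} for infinitesimal Torelli, without the explicit Hodge-number bookkeeping you sketch).

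The genuine difference is in how you handle (a) and (b). The paper dispatches both at once via Beauville's big monodromy theorem: $H_S$ is the full group $G_{d,n}$ (identity component of the automorphism group of $H^n_{\mathrm{prim}}$ preserving the cup product), and since $H_S \subset G_S \subset G_S^{AH} \subset G_{d,n}$ and $H_{\ell,S} \subset G_{\ell,S} \subset G_S^{AH}\otimes\mathbb{Q}_\ell$, all inclusions collapse. This simultaneously gives $G_S^{der}=G_{d,n}$ simple and $G_{\ell,S}=G_S^{AH}\otimes\mathbb{Q}_\ell$, with no appeal to any particular fibre. Your route to (b) via the Fermat point and the Remark following Theorem \ref{mtmany} is a legitimate alternative, but it pulls in heavier input (Shioda's inductive structure showing Fermat motives are of CM abelian type, Deligne's absolute-Hodge theorem for abelian motives, and the CM case of Mumford--Tate), whereas big monodromy is entirely elementary by comparison. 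For (a), your ``classically known'' claim about $G_S^{der}$ is correct, but its standard proof \emph{is} the big monodromy argument---so you are implicitly invoking the same theorem the paper uses explicitly. (Minor slip: in the even-$n$ case the derived group is $\mathrm{SO}$, not $\mathrm{Spin}$.)

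In short: your proof is correct, but the paper's use of Beauville's theorem is both shorter and more self-contained than the Fermat detour.
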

\begin{proof}
We show that the assumptions of Theorem \ref{mtmany}(\ref{mtdense}) are satisfied.
We argue as in (\cite{BKU}, 7.2):
As soon as $n \ge 3$, $d \ge 5$ and $(n,d) \not= (4,5)$, the level of the variation of Hodge structure $\mathbb{V}$ is greater or equal to three.
Let $G_{d,n}$ denote the identity component of the automorphism group of the primitive cohomology $H^n_{prim}$ of a smooth projective hypersurface preserving the cup product. A big monodromy result of Beauville (\cite{Beauville}, Theorem 2 and 4) shows that $H_S = G_{d,n}$.
As $G_S^{AH} \subset G_{d,n}$, and $G_S$ and $G_{\ell,S}$ both contain $H_S$, we conclude that $G_S = G_S^{AH}$ and $G_{\ell,S} = G_S^{AH} \otimes_{\mathbb{Q}} \mathbb{Q}_{\ell}$. In addition, $G_S^{der} = G_{d,n}$ is simple.
The Torelli property follows from the fact the period map for $\mathcal{M}_{d,n}$ satisfies the infinitesimal Torelli theorem by a result of Griffiths, see (\cite{Flenner}, Theorem 3.1) for the more general case of complete intersections.
\end{proof}

\subsection{$\ell$-Galois special points and the Mumford-Tate conjecture for abelian varieties}

In this section, we consider the case where $f: \mathcal{X}\to Sh$ is the universal family of abelian varieties over a Shimura variety $Sh$ of Hodge type, and $\mathbb{V} = R^1f^{an}_{*} \underline{\mathbb{Q}}$.
Deligne showed that every Hodge cycle on an abelian variety is absolute Hodge (\cite{Deligne}, Theorem 2.11). Consequently we know that $G_Z = G_Z^{AH}$ for every subvariety $Z \subset Sh$, and every special subvariety of $Sh$ is absolutely special and hence also $\ell$-Galois special.
Moreover, the proof of the Tate conjecture for abelian varieties due to Faltings in \cite{Faltings} shows that $G_{\ell,Z}$ is a reductive group.
We use the existence of CM points to show that Conjecture \ref{everyGalspabssp} ($\ell$-Galois special subvarieties are special) implies the Mumford-Tate conjecture in this case.
We will then prove that one can reduce this conjecture to $\ell$-Galois special subvarieties of dimension zero, that is, to $\ell$-Galois special points.

\begin{thm}\label{mtequivalent}
The following are equivalent:

\begin{enumerate}[(i)]
\item \label{mt}
The Mumford-Tate conjecture \ref{mtconjecture} holds for all (principally polarized) abelian varieties of dimension $g$;
\item \label{CM}
Every $\ell$-Galois special subvariety of $\mathcal{A}_g$ is special;
\item \label{ab}
Every $\ell$-Galois special point $x \in \mathcal{A}_g(\bar{\mathbb{Q}})$ is a CM point.
\end{enumerate}
\end{thm}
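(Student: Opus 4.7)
The plan is to establish the cycle of implications $(\ref{mt}) \Rightarrow (\ref{CM}) \Rightarrow (\ref{ab}) \Rightarrow (\ref{mt})$, using throughout Deligne's theorem that for abelian varieties every Hodge cycle is absolute Hodge, so $G_Z = G_Z^{AH}$ for every closed irreducible $Z \subset \mathcal{A}_g$; the classical identification of zero-dimensional special subvarieties of $\mathcal{A}_g$ with CM points; and the Mumford-Tate conjecture for CM abelian varieties \cite{CM}.

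For $(\ref{mt}) \Rightarrow (\ref{CM})$, let $Z$ be $\ell$-Galois special. First observe that the smallest special subvariety $Z^{\mathrm{sp}} \supseteq Z$ satisfies $G_{Z^{\mathrm{sp}}} = G_Z$: any maximal element $Y$ of $\{W \supseteq Z : G_W = G_Z\}$ is automatically special and contains $Z^{\mathrm{sp}}$, pinning $G_{Z^{\mathrm{sp}}}$ between $G_Z$ and $G_Y = G_Z$. Next, for a Hodge generic $y \in Z$, hypothesis $(\ref{mt})$ gives $G_{\ell,y} = G_y \otimes \mathbb{Q}_\ell = G_Z \otimes \mathbb{Q}_\ell$; the sandwich $G_{\ell,y} \subseteq G_{\ell,Z} \subseteq G_Z \otimes \mathbb{Q}_\ell$ forces $G_{\ell,Z} = G_Z \otimes \mathbb{Q}_\ell$. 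The same computation at a Hodge generic point of $Z^{\mathrm{sp}}$ yields $G_{\ell,Z^{\mathrm{sp}}} = G_Z \otimes \mathbb{Q}_\ell = G_{\ell,Z}$, and the $\ell$-Galois specialness of $Z$ forces $Z = Z^{\mathrm{sp}}$. The implication $(\ref{CM}) \Rightarrow (\ref{ab})$ is immediate: an $\ell$-Galois special point is, by $(\ref{CM})$, a zero-dimensional special subvariety, hence a CM point.

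For the heart of the theorem, $(\ref{ab}) \Rightarrow (\ref{mt})$, I would proceed as follows. Given $x \in \mathcal{A}_g(\mathbb{C})$, take $Z \ni x$ maximal with $G_{\ell,Z} = G_{\ell,x}$ (existence follows from the bounded length of chains of irreducible closed subvarieties of $\mathcal{A}_g$); then $Z$ is $\ell$-Galois special and $x$ is $\ell$-Galois generic in $Z$. If $\dim Z = 0$, then $x = Z$ is an $\ell$-Galois special point, is CM by $(\ref{ab})$, and satisfies Mumford-Tate by \cite{CM}. If $\dim Z > 0$, the plan is to produce a CM point $x_0 \in Z$ and run the monodromy argument: the theorem of the fixed part (Theorem \ref{fixedpartnonsm}), interpreted Tannakianly, gives $G_Z = \langle H_Z, G_{x_0}\rangle$; tensoring with $\mathbb{Q}_\ell$ and combining $H_{\ell,Z} = H_Z \otimes \mathbb{Q}_\ell$ from Proposition \ref{monodromyladic} with the CM identity $G_{\ell,x_0} = G_{x_0} \otimes \mathbb{Q}_\ell$ yields $G_Z \otimes \mathbb{Q}_\ell = \langle H_{\ell,Z}, G_{\ell,x_0}\rangle$; after enlarging the field of definition so that $x_0$ is an $L$-rational point of $Z$, the splitting argument in the proof of Lemma \ref{lem}(\ref{c}) identifies the right-hand side with $G_{\ell,Z}$. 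The resulting chain $G_{\ell,x} = G_{\ell,Z} = G_Z \otimes \mathbb{Q}_\ell$ together with the sandwich $G_{\ell,x} \subseteq G_x \otimes \mathbb{Q}_\ell \subseteq G_Z \otimes \mathbb{Q}_\ell$ forces $G_{\ell,x} = G_x \otimes \mathbb{Q}_\ell$.

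The main obstacle is therefore the construction of a CM point $x_0 \in Z$ when $\dim Z > 0$. Since $Z$ is weakly special by Lemma \ref{sabGsws}, it is a component of a fiber $\pi^{-1}(y)$ of the Shimura projection $\pi : Sh_{G_Z} \to Sh_{G_Z/H_Z}$, and the fiber contains a CM point of $\mathcal{A}_g$ precisely when the base point $y$ is a CM point (equivalently a special point) of $Sh_{G_Z/H_Z}$. The maximality defining $\ell$-Galois specialness forces $y$ to be $\ell$-Galois special in $Sh_{G_Z/H_Z}$: indeed, any positive-dimensional $W \ni y$ with $G_{\ell,W} = G_{\ell,y}$ would lift through $\pi$ to a strictly larger subvariety $\pi^{-1}(W) \supsetneq Z$ with $G_{\ell,\pi^{-1}(W)} = G_{\ell,Z}$, contradicting the $\ell$-Galois specialness of $Z$. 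Embedding $Sh_{G_Z/H_Z}$ into some $\mathcal{A}_{g'}$ and invoking $(\ref{ab})$ for that moduli space (which is why the cycle should really be read uniformly in all $g$) shows that $y$ is CM, whence $\pi^{-1}(y)$ is a component of a sub-Shimura variety of $\mathcal{A}_g$ and a fortiori contains CM points. Making this descent through the Shimura quotient rigorous is the monodromy reduction to the zero-dimensional case announced in the introduction, and is the principal technical step of the proof.
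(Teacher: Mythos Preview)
Your overall approach coincides with the paper's; the differences are organizational. The paper argues $(\ref{mt})\Rightarrow$ everything, then $(\ref{CM})\Rightarrow(\ref{mt})$ and $(\ref{ab})\Rightarrow(\ref{CM})$, whereas you run the cycle $(\ref{mt})\Rightarrow(\ref{CM})\Rightarrow(\ref{ab})\Rightarrow(\ref{mt})$. Your direct $(\ref{ab})\Rightarrow(\ref{mt})$ is really the composite of the paper's two steps: once $Z$ is shown to contain a CM point $x_0$, your monodromy identity $G_Z\otimes\mathbb{Q}_\ell=\langle H_{\ell,Z},G_{\ell,x_0}\rangle=G_{\ell,Z}$ is exactly the content of (\cite{Moonen}, Cor.~4.3.15), which the paper simply cites. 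The paper's $(\ref{ab})\Rightarrow(\ref{CM})$ is your ``main obstacle'' paragraph: $Z$ weakly special, hence a fibre $\pi^{-1}(y)$ of $\pi:Sh_{G_Z}\to Sh_{G_Z/H_Z}$; one shows $y$ is $\ell$-Galois special in the quotient (via the same $W=\pi^{-1}(Y)$ argument you sketch, together with Lemma~\ref{lem}(\ref{c})); then $y$ is CM and $Z=\pi^{-1}(y)$ is special.

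Where you diverge from the paper is in how hypothesis $(\ref{ab})$ is applied to the point $y\in Sh_{G_Z/H_Z}$. Your proposed mechanism---embed $Sh_{G_Z/H_Z}$ into some $\mathcal{A}_{g'}$ and invoke $(\ref{ab})$ there---has a genuine gap: the quotient Shimura datum $(G_Z/H_Z,\overline{\mathcal{D}_Z})$ is in general only of abelian type, not of Hodge type, so a faithful symplectic embedding need not exist; and even when it does, as you yourself note, this only yields the equivalence \emph{uniformly in all $g$}, which is weaker than the fixed-$g$ statement. The paper does not go through any such embedding: it applies $(\ref{ab})$ to $y$ directly, treating ``$\ell$-Galois special point'' and ``CM point'' as intrinsic notions for the quotient Shimura variety equipped with its natural $\ell$-adic local system (so that $G_{\ell,Z}=pr^{-1}(G_{\ell,y})$). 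You have correctly identified this as the crux of the argument, but your resolution via $\mathcal{A}_{g'}$ does not work as stated; the paper's route is to read $(\ref{ab})$ in the quotient rather than to transport $y$ into another Siegel space.
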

\begin{proof}
Clearly, (\ref{mt}) implies all other conditions since it implies that $\ell$-Galois special subvarieties are special.
Let $x \in \mathcal{A}_g(\mathbb{C})$. Take $Z$ to be an $\ell$-Galois special subvariety containing $x$ with $G_{\ell,Z}= G_{\ell, x}$, so that $x$ is $\ell$-Galois generic in $Z$.
Assuming (\ref{CM}), $Z$ is a special subvariety and thus contains a CM point. Using that the Mumford-Tate conjecture is known for CM abelian varieties (\cite{CM}), the result (\cite{Moonen}, Cor. 4.3.15) shows that the $\ell$-Galois generic point $x \in Z(\mathbb{C})$ satisfies the Mumford-Tate conjecture. Hence $(\ref{CM}) \implies (\ref{mt})$.
We now show that $(\ref{ab}) \implies (\ref{CM})$.
Any $\ell$-Galois special subvariety $Z$ is weakly special by Lemma \ref{sabGsws}.
It follows from the description in Example \ref{shimura} that there is a morphism of Shimura varieties $\pi: Sh_{G_Z} \to Sh_{G_Z/H_Z}$ such that $$Z = \pi^{-1}(\{y\})$$ for a point $y \in Sh_{G_Z/H_Z}(\bar{\mathbb{Q}})$.
Denote by $pr: G_{Z} \otimes_{\mathbb{Q}} \mathbb{Q}_{\ell} \to (G_Z/H_{Z}) \otimes_{\mathbb{Q}} \mathbb{Q}_{\ell}$ the projection.
Then $$G_{\ell, Z} = pr^{-1}(G_{\ell,y}).$$
We claim that $y$ is an $\ell$-Galois special point. Suppose there exists a positive dimensional closed irreducible subvariety $Y \subset Sh_{G_Z/H_Z}$ containing $y$ with $G_{\ell, Y} = G_{\ell, y}$. In particular $H_{\ell, Y} \subset G_{\ell, y}$.
Consider the subvariety $W := \pi^{-1}(Y)$. Then $Z \subset W$ and $$H_{\ell, W} = pr^{-1}(H_{\ell,Y}) \subset G_{\ell,Z}.$$
By Lemma \ref{lem}, we obtain the equality $G_{\ell, W} = G_{\ell, Z}$. This contradicts the assumption that $Z$ is $\ell$-Galois special and thus proves that $y$ is an $\ell$-Galois special point.
Now (\ref{ab}) asserts that $y$ is a CM point. We conclude that $Z = \pi^{-1}(\{y\})$ is a special subvariety.
\end{proof}

Note that the formulation of (\ref{ab}) does not require Hodge theory anymore. For example, by Falting's Theorem on endomorphisms of abelian varieties \cite{Faltings}, $x \in \mathcal{A}_g(\bar{\mathbb{Q}})$ is a CM point if and only if $G_{\ell,x}$ is a torus (see also \cite{Moonen}, 2.4.4.).

Let $Sh \subset \mathcal{A}_g$ be a Shimura variety of Hodge type with $G_{Sh}^{der}$ simple.
Corollary \ref{Corrolary} states that part (\ref{CM}) in the above theorem holds true for maximal positive dimensional strict $\ell$-Galois special subvarieties in $Sh$. Due to the existence of CM points we get:

\begin{cor}\label{mGs}
Assume that $G_{Sh}^{der}$ is simple. Let $Z \subset Sh$ be a strict $\ell$-Galois special subvariety which is positive dimensional and maximal for these properties.
Then $G_{\ell, Z} = G_Z \otimes \mathbb{Q}_{\ell}$. Consequently, any $\ell$-Galois generic point of $Z$ satisfies the Mumford-Tate conjecture.
\end{cor}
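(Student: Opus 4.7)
The plan is to combine Corollary \ref{Corrolary} with the existence of CM points on special subvarieties of Shimura varieties of Hodge type, and then to propagate the Mumford-Tate conjecture from such a CM point, exactly as in the proof of Theorem \ref{mtequivalent}.

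First I would apply Corollary \ref{Corrolary}: since $G_{Sh}^{der}$ is simple and $Z \subset Sh \subset \mathcal{A}_g$ is a maximal positive-dimensional strict $\ell$-Galois special subvariety, the corollary yields that $Z$ is absolutely special. By Deligne's theorem (\cite{Deligne}, Theorem 2.11) every Hodge cycle on an abelian variety is absolute Hodge, so $G_Z^{AH} = G_Z$, and $Z$ is in particular a special subvariety of $Sh$. Hence $Z$ is an irreducible component of a sub-Shimura variety, and therefore contains a CM point $z_0 \in Z(\bar{\mathbb{Q}})$.

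Next, the Mumford-Tate conjecture is known for $z_0$ by \cite{CM}, so $G_{\ell, z_0} = G_{z_0} \otimes \mathbb{Q}_\ell$. Invoking Moonen's propagation result (\cite{Moonen}, Cor. 4.3.15), exactly as in the proof of Theorem \ref{mtequivalent}, the Mumford-Tate conjecture also holds for every $\ell$-Galois generic point $x$ of $Z$. Since $x$ can simultaneously be chosen Hodge generic in $Z$ (each genericity condition fails only on a countable union of proper subvarieties of the positive-dimensional variety $Z$), we have $G_x = G_Z$ and $G_{\ell, x} = G_{\ell, Z}$, and therefore $G_{\ell, Z} = G_Z \otimes \mathbb{Q}_\ell$. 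The ``consequently'' clause is then immediate: any $\ell$-Galois generic point $y$ of $Z$ satisfies $G_{\ell, y} = G_{\ell, Z} = G_Z \otimes \mathbb{Q}_\ell$, while the general inclusion $G_{\ell, y} \subset G_y^{AH} \otimes \mathbb{Q}_\ell = G_y \otimes \mathbb{Q}_\ell$ forces equality with $G_y \otimes \mathbb{Q}_\ell$.

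The substantive work is carried out by Corollary \ref{Corrolary}, which collapses the statement to the classical propagation from a CM point in a special subvariety; I do not expect any new obstacle beyond what already appears in the proof of Theorem \ref{mtequivalent}. The only mild subtlety is the density argument needed to select a point of $Z$ which is simultaneously Hodge and $\ell$-Galois generic, but this is immediate from the countability of the relevant exceptional loci.
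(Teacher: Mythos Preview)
Your proof is correct and follows exactly the line the paper intends: apply Corollary \ref{Corrolary} to see that $Z$ is (absolutely, hence by Deligne) special, pick a CM point in $Z$, and propagate via (\cite{Moonen}, Cor.~4.3.15) as in the proof of Theorem \ref{mtequivalent}. The density argument for a simultaneously Hodge and $\ell$-Galois generic point is a harmless elaboration of what the paper leaves implicit.
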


Even though we cannot prove that all $\ell$-Galois special points are CM points, we know that there are a lot of points which are not $\ell$-Galois special.
For instance, take any positive dimensional subvariety of $\mathcal{A}_g$ and any $\ell$-Galois generic point thereof.
For such points, the Mumford-Tate conjecture holds in the special case that the derived group of the Mumford-Tate group is simple:

\begin{prop}
Let $x \in \mathcal{A}_g(\mathbb{C})$ such that $G_x^{der}$ is simple and such that $x$ is not an $\ell$-Galois special point.
Then $G_{\ell,x}= G_{x} \otimes \mathbb{Q}_{\ell}$, i.e. $x$ satisfies the Mumford-Tate conjecture.
\end{prop}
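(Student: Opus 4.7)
The plan is to exploit the hypothesis that $x$ is not $\ell$-Galois special to produce a positive-dimensional $\ell$-Galois special subvariety $Z \ni x$, use the simplicity of $G_x^{\mathrm{der}}$ to force $Z$ to actually be a special subvariety of $\mathcal{A}_g$, and then transfer the Mumford-Tate conjecture from a CM point of $Z$ (where it is known by \cite{CM}) to $x$ via Moonen's result (\cite{Moonen}, Cor. 4.3.15), exactly as in the proof of Theorem \ref{mtequivalent}.

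Concretely, I would first choose a closed irreducible subvariety $Z \ni x$ that is maximal with $G_{\ell,Z} = G_{\ell,x}$; by the hypothesis on $x$, this $Z$ is positive-dimensional, and by maximality it is $\ell$-Galois special, hence weakly special by Lemma \ref{sabGsws}. Chasing $H_{\ell,Z} \subset G_{\ell,Z} = G_{\ell,x} \subset G_x \otimes \mathbb{Q}_\ell$ and applying Proposition \ref{monodromyladic} gives $H_Z \subset G_x$, so Lemma \ref{lem}(i), applied to $\{x\} \subset Z$, yields $G_x = G_Z$. In particular $G_Z^{\mathrm{der}} = G_x^{\mathrm{der}}$ is simple. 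By Proposition \ref{normal}, $H_Z$ is a normal subgroup of $G_Z^{\mathrm{der}}$; moreover $H_Z \neq 1$ because $\mathcal{A}_g$ satisfies the Torelli property (its period map is essentially the identity, so every positive-dimensional subvariety has positive period dimension). Simplicity then forces $H_Z = G_Z^{\mathrm{der}}$.

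The key structural step, and the main obstacle, is to upgrade this to $Z$ being special (not merely weakly special). If $Y \supset Z$ is closed irreducible with $G_Y = G_Z$, then $H_Z \subset H_Y \subset G_Y^{\mathrm{der}} = G_Z^{\mathrm{der}} = H_Z$ by Proposition \ref{normal} and the previous step, so $H_Y = H_Z$; since $Z$ is weakly special this forces $Y = Z$, and $Z$ is special. Being a positive-dimensional special subvariety of $\mathcal{A}_g$, $Z$ contains a CM point $z$, for which the Mumford-Tate conjecture is known by \cite{CM}. Because $x$ is $\ell$-Galois generic in $Z$, Moonen's transfer result (\cite{Moonen}, Cor. 4.3.15) then yields $G_{\ell,x} = G_{\ell,Z} = G_Z \otimes \mathbb{Q}_\ell = G_x \otimes \mathbb{Q}_\ell$, as desired.
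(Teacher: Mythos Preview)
Your proof is correct and follows essentially the same route as the paper's. The only cosmetic differences are that the paper takes any positive-dimensional $Z$ with $G_{\ell,Z}=G_{\ell,x}$ and then enlarges it to a weakly special subvariety (rather than taking a maximal, hence $\ell$-Galois special, $Z$ from the start), and that the paper phrases the conclusion ``$Z$ is special'' as ``$Z$ is an irreducible component of $Sh_{G_x}$''; the underlying monodromy argument via $H_Z = G_x^{\mathrm{der}}$ and the final appeal to CM points and (\cite{Moonen}, Cor.~4.3.15) are identical.
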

\begin{proof}
As $x$ is not an $\ell$-Galois special point, there exists a closed irreducible positive dimensional subvariety $Z$ containing $x$ with $G_{\ell,Z}=G_{\ell, x}$. We may assume that $Z$ is weakly special, this does not change $G_{\ell,Z}$ by Lemma \ref{lem}.
Let $Sh_{G_x} \subset \mathcal{A}_g $ be the connected component containing $x$ of the Shimura variety with generic Mumford-Tate group $G_x$.
We have the inclusion $$H_{\ell,Z} \subset G_{\ell,Z} = G_{\ell,x} \subset G_x \otimes \mathbb{Q}_{\ell}.$$
This shows that $H_Z \subset G_x$, and therefore $G_Z = G_x$ by Lemma \ref{lem}. This proves that $Z \subset Sh_{G_x}$.
By the fact that $G_x^{der}$ is simple and $H_Z$ is non-trivial, we get the equality $H_Z = G_x^{der}$. Hence $Z$ is an irreducible component of $Sh_{G_x}$ as $Z$ is weakly special. Using the existence of CM points in special subvarieties of Shimura varieties and (\cite{Moonen}, Cor. 4.3.15) we conclude that $$G_{\ell,x}=G_{\ell, Z}= G_x \otimes \mathbb{Q}_{\ell}.$$
\end{proof}

Together with Corollary \ref{fodgalsp} this shows the following:

\begin{cor}
Let $A$ be a principally polarized complex abelian variety such that $A$ is not defined over a number field. Suppose that the derived Mumford-Tate group $G_A^{der}$ is simple. Then $A$ satisfies the Mumford-Tate conjecture: $G_{\ell,A } =G_{A} \otimes \mathbb{Q}_{\ell}$.
\end{cor}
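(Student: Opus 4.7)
The plan is to deduce the corollary immediately from the preceding proposition together with Corollary \ref{fodgalsp}. Identify $A$ with a complex point $x \in \mathcal{A}_g(\mathbb{C})$ of the moduli space. The moduli space $\mathcal{A}_g$ (possibly after adding a level structure, as in the notational convention preceding Corollary \ref{corprojhyp}) is defined over $K=\mathbb{Q}$, so $\bar{K}=\bar{\mathbb{Q}}$. The hypothesis that $A$ is not defined over a number field translates into the statement that $x \notin \mathcal{A}_g(\bar{\mathbb{Q}})$.

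The key observation is that every $\ell$-Galois special point $y \in \mathcal{A}_g(\mathbb{C})$ is automatically defined over $\bar{\mathbb{Q}}$: this is exactly Corollary \ref{fodgalsp} applied to the zero-dimensional $\ell$-Galois special subvariety $\{y\}$. Therefore $x$, not being defined over $\bar{\mathbb{Q}}$, cannot be an $\ell$-Galois special point of $\mathcal{A}_g$.

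Now we are in a position to apply the proposition immediately preceding the corollary: $x$ satisfies the two hypotheses (a) $G_x^{\mathrm{der}}$ is simple (by assumption) and (b) $x$ is not an $\ell$-Galois special point (by the previous paragraph). That proposition then yields $G_{\ell,x} = G_x \otimes \mathbb{Q}_\ell$, which is precisely the Mumford--Tate conjecture for $A$.

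There is no genuine obstacle here; the only thing to check carefully is the field-of-definition bookkeeping, namely that the hypothesis ``$A$ is not defined over a number field'' is exactly what is needed to rule out $x$ being an $\ell$-Galois special point. This bookkeeping works because Corollary \ref{fodgalsp} guarantees that $\ell$-Galois special subvarieties of a $\mathbb{Q}$-variety are algebraic over $\mathbb{Q}$, a fact which in turn rests on the countability argument in the proof of Theorem \ref{GalLcountable}.
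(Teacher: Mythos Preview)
Your proposal is correct and matches the paper's own argument exactly: the paper simply states that the corollary follows from the preceding proposition together with Corollary~\ref{fodgalsp}. One small inaccuracy in your final explanatory remark: Corollary~\ref{fodgalsp} does not rest on the countability argument of Theorem~\ref{GalLcountable}; the dependence is the other way around (Theorem~\ref{GalLcountable} uses Corollary~\ref{fodgalsp}, whose proof is the spreading-out argument in the proposition just before it).
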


\section{Local structure of $\ell$-Galois special subvarieties}

We recall how the Theorem of Cattani-Deligne-Kaplan \cite{CDK} can be used to determine the local structure of a special subvariety, and formulate an analogous conjecture for the local structure of an $\ell$-Galois special subvariety (in the case of a family of abelian varieties).
Roughly speaking, this conjecture says that forcing the existence of Tate cycles should cut out an algebraic subvariety of the base.
We then prove that this conjecture implies the Mumford-Tate conjecture for (principally polarized) abelian varieties.

\subsection{Local structure of special subvarieties}

The characterization of special subvarieties as in Definition \ref{defspintro} is a bit ad hoc since we are forcing the algebraicity into the definition: it is not obvious that it is reasonable to look at maximal algebraic subvarieties with a given property if this property is not closely related to algebraicity.

In fact, special subvarieties were initially defined in a different way, and the observation that both definitions coincide relies on a deep result of Cattani-Deligne-Kaplan in \cite{CDK}, as we will now recall.

Let $\Phi: S^{an} \to X:= \Gamma \backslash \mathcal{D}$ be the complex analytic period map attached to the variation of Hodge structure $\mathbb{V}$. The target $X$ is the quotient of a complex analytic space $\mathcal{D}$ which is a connected component of the $G_S(\mathbb{R})$-orbit of a Hodge cocharacter $h: \mathbb{S} \to G_{S, \mathbb{R}}$ by a discrete subgroup $\Gamma \subset G_S(\mathbb{R})$ containing the image of the monodromy representation $\rho: \pi_1(S^{an},s) \to G_S(\mathbb{Q})$.
Here we allow ourselves to replace $S$ by a finite étale cover.

Given a closed irreducible algebraic subvariety $Z \subset S$ with generic Mumford-Tate group $G_Z$, one defines a closed analytic subspace $\mathcal{D}_Z\subset \mathcal{D}$ as a connected component of the $G_Z(\mathbb{R})$-orbit of the Hodge cocharacter $h_x: \mathbb{S} \to G_{Z, \mathbb{R}}$ corresponding to any Hodge generic point $x \in Z(\mathbb{C})$.

If $X_Z$ denotes the image of $\mathcal{D}_Z$ in $\Gamma \backslash \mathcal{D}$, then the period map restricted to $Z$ factors through $X_Z$, that is $$Z \subset \Phi^{-1}(X_Z).$$
Note that $\Phi^{-1}(X_Z)$ is the closed analytic subspace of $S$ defined by forcing the tensors fixed by $G_Z$ to be Hodge tensors.
A fundamental result of Cattani-Deligne-Kaplan shows that this is in fact an algebraic subvariety:

\begin{thm}[\cite{CDK}, see also \cite{BKT}] \label{AlgHodge}
The preimage $\Phi^{-1}(X_Z) \subset S$ is a closed algebraic subvariety of $S$.
\end{thm}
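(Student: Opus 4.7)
The plan is to reduce algebraicity of $\Phi^{-1}(X_Z)$ to algebraicity of the Hodge locus of a finite list of tensors, establish that the latter is closed analytic by standard variation-of-Hodge-structure arguments, and then upgrade closed analytic to closed algebraic either by the asymptotic analysis of degenerations due to Cattani-Deligne-Kaplan or by an o-minimality / definable Chow argument in the spirit of Bakker-Klingler-Tsimerman.

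First I would give a concrete tensor description. By Chevalley's theorem, $G_Z \subset GL(\mathbb{V}_z)$ is cut out inside $GL(\mathbb{V}_z)$ by a finite set of tensors $t_1,\dots,t_r \in \mathbb{V}_z^{\otimes}$, and $\mathcal{D}_Z \subset \mathcal{D}$ is exactly the locus of Hodge structures on $\mathbb{V}_z$ for which all the $t_i$ lie in $F^0$. Choosing a small simply connected neighbourhood $U$ of a Hodge generic point $z \in Z$, the tensors extend to flat sections $\widetilde t_i$ of $\mathbb{V}^{\otimes}|_U$, and
\[
\Phi^{-1}(X_Z)\cap U \;=\; \{\,s\in U : \widetilde t_i(s) \in F^0\mathcal{V}^{\otimes}_s \text{ for all } i\,\},
\]
which is manifestly a closed analytic subvariety of $U$. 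Gluing over all $\Gamma$-translates of the ordered tuple $(t_1,\dots,t_r)$ exhibits $\Phi^{-1}(X_Z)$ globally as a countable union of closed analytic subvarieties of $S^{an}$.

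The main obstacle is then to upgrade closed analytic to closed algebraic; since $S$ is only quasi-projective, Chow's theorem does not apply directly. One route, following \cite{CDK}, is to pick a smooth projective compactification $\bar S\supset S$ whose boundary $D=\bar S\setminus S$ is a simple normal crossings divisor and to use Schmid's nilpotent orbit theorem together with the several-variable $SL_2$-orbit theorem of Cattani-Kaplan-Schmid to control the asymptotic behaviour of $F^\bullet$ along $D$; the technical heart is proving that each local piece of $\Phi^{-1}(X_Z)$ extends to a closed analytic subvariety of $\bar S$, after which GAGA concludes algebraicity. An alternative route, following \cite{BKT}, is to verify that $\Phi : S^{an}\to\Gamma\backslash\mathcal{D}$ is definable in the o-minimal structure $\mathbb{R}_{\mathrm{an},\exp}$ by constructing an $\mathbb{R}_{\mathrm{an},\exp}$-definable fundamental set for the $\Gamma$-action on $\mathcal{D}$ (again using the $SL_2$-orbit theorem), to observe that $X_Z$ is definable and complex analytic, and to apply the Peterzil-Starchenko definable Chow theorem to the closed definable complex analytic subvariety $\Phi^{-1}(X_Z)\subset S^{an}$. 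In either approach the substantive difficulty lies in controlling the degenerating variation of Hodge structure at infinity; the tensor reduction and the local analytic description are comparatively routine.
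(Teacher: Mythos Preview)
The paper does not prove this theorem itself; it records it as a citation to \cite{CDK} and \cite{BKT}. Your sketch correctly outlines both of the cited approaches: the tensor reduction via Chevalley, the local analytic description of the locus where the flat extensions $\widetilde t_i$ remain in $F^0$, and then either the asymptotic extension over a normal-crossings boundary (Cattani--Deligne--Kaplan) or the definability of the period map combined with Peterzil--Starchenko (Bakker--Klingler--Tsimerman).

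One point deserves more care. You first write that gluing over all $\Gamma$-translates of $(t_1,\dots,t_r)$ exhibits $\Phi^{-1}(X_Z)$ as a \emph{countable} union of closed analytic subsets, but in the o-minimal route you then treat $\Phi^{-1}(X_Z)$ as a single closed definable set to which Peterzil--Starchenko applies. A definable set has only finitely many connected components, so these two descriptions are in tension. The resolution is that $X_Z$ is in fact closed and definable in $\Gamma\backslash\mathcal{D}$, but this is not a free observation: it uses that $\Gamma$ is arithmetic in $G_S$, so that $\Gamma\cap G_Z(\mathbb{Q})$ is an arithmetic lattice in $G_Z$, together with the \cite{BKT} result that morphisms of arithmetic quotients are definable with closed image. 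In the \cite{CDK} route the asymptotic analysis only shows that the Hodge locus of a \emph{fixed} integral tensor extends to a closed analytic set over the boundary; the statement that only finitely many $\Gamma$-translates contribute to $\Phi^{-1}(X_Z)$ is again a separate input of the same arithmetic nature. This finiteness step should be made explicit rather than absorbed into the phrase ``observe that $X_Z$ is definable''.
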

In particular, $Z \subset S$ is special in the sense of Definition \ref{defspintro} if and only if $Z$ is an irreducible component of $\Phi^{-1}(X_Z)$. Thus special subvarieties are subvarieties cut out by certain Hodge tensors.

The concrete description using the period map gives a way of determining the complete local ring $\widehat{\mathcal{O}_{Z,x}}$ at a point $x \in Z(\mathbb{C})$.

After composing $h : \mathbb{S} \to G_{S, \mathbb{R}}$ and $h_x: \mathbb{S} \to G_{Z, \mathbb{R}}$ with
\begin{eqnarray*}
m: \mathbb{G}_{m, \mathbb{C}} & \to & \mathbb{S}_{\mathbb{C}}, \\
t & \mapsto & (t,1)
\end{eqnarray*}
we obtain cocharacters $\mu: \mathbb{G}_{m, \mathbb{C}} \to G_{S, \mathbb{C}}$, and $\mu_x: \mathbb{G}_{m, \mathbb{C}} \to G_{Z, \mathbb{C}}$.
We define the flag varieties $\mathcal{F} := \mathcal{F}(G_S, \mu) = G_{S, \mathbb{C}} / P_{\mu}$ and $\mathcal{F}_Z := \mathcal{F}(G_Z, \mu_x) = G_{Z, \mathbb{C}} / P_{\mu_x}$.
By construction, the Mumford-Tate domains $\mathcal{D} \subset \mathcal{F}$ and $\mathcal{D}_Z \subset \mathcal{F}_Z$ are open analytic subspaces.
The completion of the period map $\Phi$ at $x$ gives rise to a morphism of formal schemes
$$\hat{\Phi}_x: \widehat{S_x} \to \widehat{\mathcal{D}_{y}} = \widehat{\mathcal{F}_{y}}$$ to the completion of the flag variety $\mathcal{F}$ at a point $y \in \mathcal{F}(\mathbb{C})$.
The restriction of the algebraic vector bundle with connection $(\mathcal{V}, \nabla)$
is trivial over $\widehat{S_x}$, cf. (\cite{Katz}, Proposition 8.9), and the period map $\hat{\Phi}_x$ just records the infinitesimal variation of the Hodge filtration on this trivial vector bundle.

The restricted period map $$\hat{\Phi}_x: \widehat{Z_x} \to \widehat{(\mathcal{D}_{Z})_{y}} = \widehat{(\mathcal{F}_{Z})_{y}} \subset \widehat{\mathcal{F}_{y}}$$ factors through the completion of the smaller flag variety $\mathcal{F}_Z$.

The following corollary follows from Theorem \ref{AlgHodge}.
\begin{cor}\label{locsp}
If $Z$ is a special subvariety, the completion $ \widehat{Z_x}$ is an irreducible component of the preimage $\hat{\Phi}_x^{-1}(\widehat{(\mathcal{F}_{Z})_{y}})$
of $\widehat{(\mathcal{F}_{Z})_{y}}$ under the period map.
\end{cor}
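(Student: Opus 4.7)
The plan is to deduce Corollary \ref{locsp} directly from Theorem \ref{AlgHodge} by passing from the global algebraic statement to a statement about formal completions at $x$. By definition (or by Theorem \ref{AlgHodge} combined with Definition \ref{defspintro}), the subvariety $Z$ being special means exactly that $Z$ is an irreducible component of the closed algebraic subvariety $\Phi^{-1}(X_Z) \subset S$. So the task reduces to identifying the completion $\widehat{(\Phi^{-1}(X_Z))_x}$ with the formal preimage $\hat{\Phi}_x^{-1}(\widehat{(\mathcal{F}_Z)_y})$, and then concluding that an irreducible component of the former at $x$ gives an irreducible component of the latter.

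First I would check that completion commutes with taking preimages here. Since $\Phi$ factors locally through the universal cover $\mathcal{D} \to \Gamma \backslash \mathcal{D} = X$, which is a local analytic isomorphism around the chosen lift $y \in \mathcal{D}$ of $\Phi(x) \in X$, the analytic germ of $\Phi^{-1}(X_Z)$ at $x$ coincides with the analytic germ of the preimage of $\mathcal{D}_Z$ at $x$ under a local lift of $\Phi$ valued in $\mathcal{D}$. Passing to formal completions and applying flatness of analytic-to-formal completion, this gives an equality $\widehat{(\Phi^{-1}(X_Z))_x} = \hat{\Phi}_x^{-1}(\widehat{(\mathcal{D}_Z)_y})$ of formal subschemes of $\widehat{S_x}$.

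Next I would identify $\widehat{(\mathcal{D}_Z)_y}$ with $\widehat{(\mathcal{F}_Z)_y}$. By construction, $\mathcal{D}_Z$ is a connected component of the $G_Z(\mathbb{R})$-orbit of the Hodge cocharacter $h_x$ inside the full period domain $\mathcal{D}$, and it is an open analytic subset of the flag variety $\mathcal{F}_Z = G_{Z,\mathbb{C}}/P_{\mu_x}$. In particular the inclusion $\mathcal{D}_Z \hookrightarrow \mathcal{F}_Z$ induces an isomorphism on completions at the point $y$, as does the open embedding $\mathcal{D} \hookrightarrow \mathcal{F}$ at $y$. Combining these identifications yields $\widehat{(\Phi^{-1}(X_Z))_x} = \hat{\Phi}_x^{-1}(\widehat{(\mathcal{F}_Z)_y})$ as closed formal subschemes of $\widehat{S_x}$.

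Finally, since $Z$ is an irreducible component of the algebraic subvariety $\Phi^{-1}(X_Z)$ and $x \in Z$, the formal completion $\widehat{Z_x}$ is a union of irreducible components of $\widehat{(\Phi^{-1}(X_Z))_x}$, each of which is therefore an irreducible component of $\hat{\Phi}_x^{-1}(\widehat{(\mathcal{F}_Z)_y})$; this is the content of the corollary. The only delicate point is to be careful about the passage from analytic germs to formal completions and from $X_Z \subset X$ to $\mathcal{D}_Z \subset \mathcal{D}$ through the étale-local quotient $\mathcal{D} \to \Gamma \backslash \mathcal{D}$, but since all the identifications are local isomorphisms at $y$ this is routine and does not require any further input beyond Theorem \ref{AlgHodge}.
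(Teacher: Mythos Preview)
Your proposal is correct and follows essentially the same approach as the paper, which simply asserts that the corollary follows from Theorem~\ref{AlgHodge} without further comment. You have spelled out the routine passage from the global algebraic statement to formal completions, including the identifications $\widehat{(\mathcal{D}_Z)_y} = \widehat{(\mathcal{F}_Z)_y}$ and the lift through the local isomorphism $\mathcal{D} \to \Gamma \backslash \mathcal{D}$; your final remark that $\widehat{Z_x}$ is in general a \emph{union} of irreducible components (rather than a single one, if $x$ is a singular point of $Z$) is in fact slightly more precise than the paper's phrasing.
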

\begin{rem}
Here of course an irreducible component of a formal scheme $\operatorname{Spf} R$ is not defined using the underlying topological space, but using the minimal prime ideals of $R$.
\end{rem}

Intuitively, this means the following: the Hodge tensors fixed by $G_{Z}$ give rise to de Rham tensors $t_{\alpha, dR} \in \mathcal{V}_x^{\otimes}$, and the local equations defining $Z$ around the point $x$ are given by the condition that the $t_{\alpha, dR}$ stay in $F^0$, the zeroth piece of the Hodge filtration.

\subsection{Conjecture on the local structure of $\ell$-Galois special subvarieties}

We restrict ourselves to the case of a family of abelian varieties $f: \mathcal{X} \to S$ defined over a number field. It follows from Faltings' work \cite{Faltings} that the groups $G_{\ell,Z}$ are reductive for all $Z \subset S$.
For an $\ell$-Galois special subvariety $Z \subset S$ we want to use Fontaine's $B_{dR}$-comparison in $\ell$-adic Hodge theory to formulate a conjecture describing the complete local ring $\widehat{\mathcal{O}_{Z,x}}$ at a point $x \in Z(\bar{\mathbb{Q}})$.
This conjecture is the analog of Corollary \ref{locsp} for $\ell$-Galois special subvarieties and may thus be viewed as an $\ell$-adic analog of the theorem of Cattani-Deligne-Kaplan on the algebraicity of Hodge loci, in the case of families of abelian varieties.
We first use $\ell$-adic Hodge theory to show that the local period map for an $\ell$-Galois special subvariety $Z$ factors naturally through a flag variety attached to the group $G_{\ell,Z}$, or rather, a de Rham incarnation of this group.

Suppose that $Z \subset S$ is an $\ell$-Galois special subvariety defined over a number field $K$ and let $v$ be a place of $K$ above $\ell$. Let $x \in Z^{sm}(K)$ be a point with a geometric point $\bar{x} \in Z^{sm}(\bar{K})$ lying over $x$.
We denote by $(t_{\alpha,x})_{\alpha} $ the collection of tensors $t_{\alpha,x} \in \mathbb{L}^{\otimes}_{\bar{x}}$ which are fixed by the reductive group $G_{\ell,Z}$. For the local study we may replace $Z$ by a finite étale cover and therefore assume that the tensors $(t_{\alpha,x})_{\alpha}$ extend to global sections $(t_{\alpha})_{\alpha}$ of $\left. \mathbb{L}\right|_{Z_K}^{\otimes}$.
Here we also allow ourselves to replace $K$ by a finite extension.
By (\cite{Scholzerigid}, Theorem 1.10) we have an $\ell$-adic comparison isomorphism of sheaves
$$ \mathbb{L} \otimes_{\mathbb{Q}_{\ell}} \mathcal{O}\mathbb{B}_{dR, S_{v}} \overset{\sim}{\rightarrow} \mathcal{V} \otimes_{\mathcal{O}_{S_{v}}} \mathcal{O}\mathbb{B}_{dR, S_{v}}$$
on the pro-étale site of the rigid analytic space $S^{an}_v$, which is compatible with the connection and the filtration on both sides. Here $S_{v}:= S \otimes_K K_{v}$, and $(\mathcal{O}\mathbb{B}_{dR, S_{v}}, \nabla, Fil^{\bullet})$ is the structural de Rham period sheaf defined in (\cite{Scholzerigid}, Definition 6.8).
After restricting this comparison to the smooth locus $Z_{v}^{sm}$, the tensors $(t_{\alpha})_{\alpha}$ give rise to de Rham tensors $t_{\alpha,\ell-dR}$ in $H^0(Z^{sm}_{v}, \left.\mathcal{V}\right|_{Z^{sm}_{v}}^{\otimes})$ which are horizontal for $\nabla$ and lie in the $F^0$-part of the Hodge filtration.
At the point $x \in Z^{sm}(K_v)$ we thus obtain de Rham tensors $t_{\alpha,\ell-dR,x} \in \mathcal{V}_x^{\otimes}$.
By construction, the $t_{\alpha, \ell-dR,x}$ correspond to the Galois-invariant étale tensors $t_{\alpha,x} \in \mathbb{L}^{\otimes}_{\bar{x}}$ via the $\ell$-adic comparison
$$ \mathcal{V}_x = (\mathbb{L}_{\bar{x}} \otimes_{\mathbb{Q}_{\ell}} B_{dR})^{\mathrm{Gal}(\bar{K}_{v}/K_{v})}.$$
\begin{mydef}\label{definitiongroupGelldR}
We define $G_{\ell-dR, Z}$ to be the $K_{v}$-subgroup of $GL(\mathcal{V}_{x})$ defined by fixing the tensors $t_{\alpha, \ell-dR, x}$.
\end{mydef}
\begin{rem}
We have defined the group $G_{\ell-dR,Z}$ in such a way that the isomorphism $$GL(\mathbb{L}_{\bar{x}}) \otimes_{\mathbb{Q}_{\ell}} B_{dR} \cong GL(\mathcal{V}_x) \otimes_{K_v} B_{dR}$$ induced by Fontaine's $B_{dR}$-comparison restricts to an isomorphism $ G_{\ell,Z} \otimes_{\mathbb{Q}_{\ell}} B_{dR} \cong G_{\ell-dR,Z} \otimes_{K_v} B_{dR}$.
\end{rem}

The group $G_{\ell-dR, Z}$ also has a Tannakian description. Denote by $\rho_x: \mathrm{Gal}(\bar{K}_v / K_v) \to G_{\ell, Z}(\mathbb{Q}_{\ell}) \subset GL(\mathbb{L}_{\bar{x}})$ the Galois representation attached to the point $x \in Z(K_v)$.
Pre-composing a representation of the group $G_{\ell,Z}$ with $\rho_x$ gives a functor $$\operatorname{Rep}_{\mathbb{Q}_{\ell}} G_{\ell,Z} \to \operatorname{Rep}^{dR}_{\mathbb{Q}_{\ell}} \mathrm{Gal}(\bar{K}_v / K_v)$$
to the category of continuous $\ell$-adic Galois representations of $\mathrm{Gal}(\bar{K}_v / K_v)$ which are de Rham in the sense of Fontaine \cite{Fontaine}.
Here we use the fact that every representation of $G_{\ell,Z}$ is isomorphic to a subquotient of some $\mathbb{L}_{\bar{x}}^{\otimes(m,n)}$, and therefore the image under the above functor is a de Rham Galois representation.
Composing with Fontaine's functor \begin{eqnarray*} D_{dR}: \operatorname{Rep}^{dR}_{\mathbb{Q}_{\ell}} \mathrm{Gal}(\bar{K}_v / K_v) & \to & \mathrm{Vect}_{K_v}, \\
(V, \rho) & \mapsto & (V \otimes_{\mathbb{Q}_{\ell}} B_{dR})^{\mathrm{Gal}(\bar{K}_v/K_v)} \end{eqnarray*}
gives a functor $$\mathrm{Rep}_{\mathbb{Q}_{\ell}} G_{\ell,Z} \to \mathrm{Vect}_{K_v}$$
which factors through a $K_v$-linear Tannakian category obtained from $\mathrm{Rep}_{\mathbb{Q}_{\ell}} G_{\ell,Z}$ by tensoring the morphisms with $K_v$ and possibly adding the kernels of new idempotents introduced by this scalar extension.
Then one can interpret the group $G_{\ell-dR,Z}$ as the Tannaka group of this $K_v$-linear Tannakian category with respect to the above fiber functor to $K_v$-vector spaces.

By definition of the group $G_{\ell,Z}$, the monodromy representation $\rho_{\ell, Z_v^{sm}}: \pi_1^{\acute{e}t}(Z^{sm}_v,x) \to GL(\mathbb{L}_{\bar{x}})$ has image in $ G_{\ell,Z}(\mathbb{Q}_{\ell}) $, and thus gives rise to a tensor functor
\begin{equation}\label{functorGellZ} \mathrm{Rep}_{\mathbb{Q}_{\ell}} G_{\ell,Z} \to \mathbb{Q}_{\ell}-\mathrm{LocSys}_{Z_v^{sm}}^{dR} \end{equation}
to the category of $\mathbb{Q}_{\ell}$-local systems on the pro-étale site of $Z_v^{sm}$ which are de Rham
in the sense of (\cite{Scholzerigid}, Definition 8.3). Again, we use that subquotients of the local systems $\left.\mathbb{L}\right|_{Z^{sm}_{v}}^{\otimes(m,n)}$ are de Rham by (\cite{Scholzerigid}, Theorem 8.8(ii)).

There is a relative Fontaine functor
\begin{eqnarray*}D_{dR}: \mathbb{Q}_{\ell}-\mathrm{LocSys}_{Z_v^{sm}}^{dR} & \to & FMIC_{Z_v^{sm,an}}, \\
\mathbb{M} & \mapsto & \nu_*(\mathbb{M} \otimes_{\underline{\mathbb{Q}}_{\ell}} \mathcal{O}\mathbb{B}_{dR, Z_v^{sm}} )
\end{eqnarray*}
to the category of filtered vector bundles with integrable connection on the rigid space $Z_v^{sm,an}$ (cf. \cite{LiuZhu}, 3.2). Here $\nu: Z_{v, pro\acute{e}t}^{sm} \to Z_{v}^{sm,an}$ denotes the natural projection of sites from the pro-étale site to the analytic site.
Composing (\ref{functorGellZ}) with this functor, we obtain a tensor functor
$$ \mathrm{Rep}_{\mathbb{Q}_{\ell}} G_{\ell,Z} \to FMIC_{Z_v^{sm}} $$
which factors through the $K_v$-linear category $\mathrm{Rep}_{K_v} G_{\ell-dR,Z}$,
giving a tensor functor
\begin{equation}\label{Gstructure} \mathrm{Rep}_{K_v} G_{\ell-dR,Z} \to FMIC_{Z_v^{sm}}. \end{equation}

As before, the infinitesimal variation of the Hodge filtration induces a natural period map
$$\hat{\Phi}_{x}: \widehat{S_{v,x}} \to \widehat{\mathcal{F}_{y}} $$
to the completion of a flag variety $\mathcal{F}:= GL(\mathcal{V}_x) / P$ at a point $y \in \mathcal{F}(K_v)$.

By (\cite{Rivano}, IV, 2.4) there exists a cocharacter $\mu_x: \mathbb{G}_{m, K_{v}} \to G_{\ell-dR, Z}$ splitting the Hodge filtration on $\mathcal{V}_{x}$. We denote by $\mathcal{F}_{\ell,Z}:= G_{\ell-dR,Z} / P_{\mu_x}$ the flag variety for the group $G_{\ell-dR, Z}$ and the parabolic $ P_{\mu_x} $ defined by the cocharacter $\mu_x$.

The functor (\ref{Gstructure}) equips the restriction of the filtered vector bundle with connection $(\mathcal{V}, \nabla, F^{\bullet})$ to $Z_v^{sm}$ with a $G_{\ell-dR, Z}$-structure, and hence the restriction of $\hat{\Phi}_x$ to $\widehat{Z^{sm}_{v,x}}$ factors through the completion of the flag variety $\mathcal{F}_{\ell,Z}$:
$$\hat{\Phi}_{x}: \widehat{Z^{sm}_{v,x}} \to \widehat{(\mathcal{F}_{\ell,Z})_{y}} \subset \widehat{\mathcal{F}_{y}}.$$
This shows that the restriction of $\hat{\Phi}_{x}$ to $\widehat{Z_{v,x}}$ factors through $\widehat{(\mathcal{F}_{\ell,Z})_{y}}$.

We are ready to formulate the $\ell$-adic analog of Corollary \ref{locsp} as a conjecture.

\begin{conj}[Local structure of $\ell$-Galois special subvarieties] \label{ellCDK}
If $Z \subset S$ is an $\ell$-Galois special subvariety, $\widehat{Z_{v,x}} $ is an irreducible component of the preimage
$$\hat{\Phi}_x^{-1}(\widehat{(\mathcal{F}_{\ell,Z})_{y}})$$
of $\widehat{(\mathcal{F}_{\ell,Z})_{y}}$ under the period map.
\end{conj}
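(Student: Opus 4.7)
The plan is to establish the non-trivial inclusion, namely that $\widehat{Z_{v,x}}$ exhausts an irreducible component of $\hat{\Phi}_x^{-1}(\widehat{(\mathcal{F}_{\ell,Z})_{y}})$; the opposite inclusion is already built into the setup, since the $G_{\ell-dR,Z}$-structure on the restriction of $(\mathcal{V},\nabla,F^\bullet)$ to $Z_v^{sm}$ was shown to factor $\hat{\Phi}_x|_{\widehat{Z^{sm}_{v,x}}}$ through $\widehat{(\mathcal{F}_{\ell,Z})_{y}}$. First I would make the preimage explicit: using the trivialization of $(\mathcal{V},\nabla)$ over $\widehat{S_{v,x}}$ (\cite{Katz}, Proposition 8.9), the de Rham Tate tensors $t_{\alpha,\ell-dR,x}\in \mathcal{V}_x^{\otimes}$ propagate uniquely to horizontal sections $\tilde{t}_{\alpha}$ of $\mathcal{V}^{\otimes}|_{\widehat{S_{v,x}}}$, and the preimage is cut out by the condition that each $\tilde{t}_{\alpha}$ lie in $F^0$. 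Call this formal closed subscheme $T_{\ell,Z,x}\subset \widehat{S_{v,x}}$; by construction $\widehat{Z_{v,x}}\subset T_{\ell,Z,x}$, and the conjecture amounts to showing that $\widehat{Z_{v,x}}$ contains an irreducible component of $T_{\ell,Z,x}$.

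The heart of the proposed argument is an algebraization step. I would try to produce a closed algebraic subvariety $W\subset S$ through $x$ such that $\widehat{W_{v,x}}$ is an irreducible component of $T_{\ell,Z,x}$ containing $\widehat{Z_{v,x}}$. Once such a $W$ is available, the horizontal, $F^0$ de Rham sections $\tilde{t}_\alpha|_{\widehat{W_{v,x}}}$ should, by reversing the construction that produced the $t_{\alpha,\ell-dR}$ from the $\ell$-adic tensors $t_\alpha$ via Fontaine's $B_{dR}$-comparison and \cite{Scholzerigid} Theorem~1.10, correspond to global monodromy- and Galois-invariant sections of $\mathbb{L}|_{W_v}^{\otimes}$. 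This would force $G_{\ell,W}\subset G_{\ell,Z}$; combined with $W\supset Z$ and Lemma~\ref{lem}(\ref{c}) it gives $G_{\ell,W}=G_{\ell,Z}$, and maximality of $Z$ as an $\ell$-Galois special subvariety yields $W=Z$ in a neighborhood of $x$, which is the desired conclusion.

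The hard part will be the algebraization itself. This is a precise $\ell$-adic analog of the Cattani--Deligne--Kaplan theorem \cite{CDK}, which in the Hodge setting produces the algebraicity of loci defined by flat sections of $\mathcal{V}^{\otimes}$ lying in $F^0$ via nilpotent- and $SL_2$-orbit techniques for polarized variations of Hodge structure; these tools have no direct étale counterpart. A plausible unconditional attack would combine a relative version of Fontaine's $B_{dR}$-comparison, along the lines of \cite{Scholzerigid}, with an Artin-type approximation statement for the locus $T_{\ell,Z,x}$, in order to produce an algebraic model $W$. However, Theorem~\ref{ellCDKMT} shows that the present conjecture, for $S=\mathcal{A}_g$, already implies the Mumford--Tate conjecture for principally polarized abelian varieties of dimension $g$, and the introduction observes that the converse implication also holds; one should therefore not expect a proof substantially easier than the Mumford--Tate conjecture itself. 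Partial unconditional progress is likely to require auxiliary hypotheses such as the weakly non-factor condition of Theorem~\ref{mainthm}, under which the $\ell$-Galois special subvariety is already known to be special and Corollary~\ref{locsp} provides a starting point.
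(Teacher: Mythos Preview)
The statement you are attempting to prove is explicitly a \emph{conjecture} in the paper; there is no unconditional proof given, and none is expected. The paper's only argument for Conjecture~\ref{ellCDK} is the conditional remark at the end of \S4.2: assuming the Mumford--Tate conjecture, every $\ell$-Galois special $Z$ is special with $G_{\ell,Z}=G_Z\otimes\mathbb{Q}_\ell$, Blasius's compatibility (Lemma~\ref{groupcomp}) identifies $G_{\ell-dR,Z}\otimes_K\mathbb{C}$ with $G_Z\otimes_{\mathbb{Q}}\mathbb{C}$, and Corollary~\ref{locsp} then gives the desired local description. In the other direction, Theorem~\ref{ellCDKMT} shows that the conjecture for $\mathcal{A}_g$ implies Mumford--Tate for principally polarized abelian varieties, so the two statements are essentially equivalent.

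Your proposal is therefore not a proof but a correct outline of what an unconditional proof would have to accomplish, and you yourself recognize this in your final paragraph. Your identification of the algebraization of the formal locus $T_{\ell,Z,x}$ as the crux is exactly right, and your observation that this is an $\ell$-adic analog of Cattani--Deligne--Kaplan with no available substitute for the nilpotent/SL$_2$-orbit machinery is the honest state of affairs. One further caution about your step~3: even granting an algebraic $W$ with $\widehat{W_{v,x}}$ a component of $T_{\ell,Z,x}$, ``reversing'' the relative $B_{dR}$-comparison to pass from horizontal $F^0$ de Rham sections on $W_v^{sm}$ back to $\pi_1^{\acute{e}t}(W_L)$-invariant \'etale tensors is not formal. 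The functor $D_{dR}$ is not an equivalence, and you would need to know that the relevant filtered $F$-isocrystal data are admissible in an appropriate sense before concluding $G_{\ell,W}\subset G_{\ell,Z}$; this is an additional, nontrivial input beyond algebraization.
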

The conjecture states that an $\ell$-Galois special subvariety $Z$ of $S$ is locally cut out by the condition that the de Rham tensors $t_{\alpha,\ell-dR}$ fixed by $G_{\ell-dR,Z}$ stay in the zeroth piece of the Hodge filtration.
\begin{rem}
It seems difficult to formulate a similar conjecture for more general families, as the group $G_{\ell,Z}$ is not known to be reductive.
\end{rem}

Let us finish with the remark that Conjecture \ref{ellCDK} follows from the Mumford-Tate conjecture.
Indeed, assuming the Mumford-Tate conjecture, every $\ell$-Galois special subvariety $Z$ is special and $G_{\ell,Z} = G_Z \otimes \mathbb{Q}_{\ell}$.
By a result of Blasius (cf. Lemma \ref{groupcomp} below), this shows that $G_{\ell-dR,Z}$ is defined over $K$ and the complex comparison identifies $G_{Z} \otimes_{\mathbb{Q}} \mathbb{C} = G_{\ell-dR,Z} \otimes_K \mathbb{C}$. The desired local structure then follows from Corollary \ref{locsp}.

\subsection{Conjecture on the local structure and the Mumford-Tate conjecture for abelian varieties}

In this section we show that Conjecture \ref{ellCDK} on the local structure of $\ell$-Galois special subvarieties for the Siegel modular varieties $S = \mathcal{A}_g$ implies the Mumford-Tate conjecture for principally polarized abelian varieties.
We hope that this can serve as a motivation for the relevance of studying an analog of the algebraicity theorem of Cattani-Deligne-Kaplan in an $\ell$-adic setting.

Let $K\subset \mathbb{C}$ be a number field and $v$ a place of $K$ dividing $\ell$.
A point $x \in \mathcal{A}_g(K)$ corresponds to a principally polarized abelian variety $A$ over $K$.
We denote by $V_B = H_B^1(A_{\mathbb{C}}, \mathbb{Q})$ the Betti cohomology of $A$, and similarly $V_{dR/K} = H_{dR}^1(A/K)$, $V_{\acute{e}t} = H_{\acute{e}t}^1(A_{\bar{K}}, \mathbb{Q}_{\ell})$.
Let $G_x$ be the Mumford-Tate group of $A$. By Deligne's result (\cite{Deligne}, Theorem 2.11) that Hodge cycles on abelian varieties are absolute Hodge, after possibly replacing $K$ by a finite extension, the subgroup $G_x \otimes \mathbb{C} \subset GL(V_B) \otimes \mathbb{C} \cong GL(V_{dR/\mathbb{C}})$ descents to a $K$-subgroup $ G_{dR, x} \subset GL(V_{dR/K})$.

We need the following Lemma which easily follows from work of Blasius \cite{Blas}.

\begin{lem}\label{groupcomp}
Under Fontaine's $\ell$-adic $B_{dR}$-comparison isomorphism $V_{\acute{e}t} \otimes_{\mathbb{Q}_{\ell}} B_{dR} \cong V_{dR/ K} \otimes_{K} B_{dR}$ the subgroups
$$G_{x, \mathbb{Q}_{\ell}} \otimes_{\mathbb{Q}_{\ell}} B_{dR} = G_{dR,x} \otimes_K B_{dR}$$ are identified.
\end{lem}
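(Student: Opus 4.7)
The plan is to exploit the Tannakian characterization of both groups by their tensor invariants, and then to identify these invariants with matching collections of absolute Hodge tensors via a theorem of Blasius. Both $G_{x,\mathbb{Q}_{\ell}} \otimes_{\mathbb{Q}_{\ell}} B_{dR}$ and $G_{dR,x} \otimes_K B_{dR}$ are reductive subgroups of the common $B_{dR}$-linear group $GL(V_{\acute{e}t} \otimes_{\mathbb{Q}_{\ell}} B_{dR}) \cong GL(V_{dR/K} \otimes_K B_{dR})$, and each is cut out by its fixed tensors in the tensor algebras of the respective module. Therefore it suffices to exhibit a common spanning set of invariant tensors on the two sides which correspond under Fontaine's comparison.

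First I would identify the invariants. By definition $G_x$ is the subgroup of $GL(V_B)$ fixing the collection of Hodge tensors in $V_B^{\otimes}$; by Deligne's theorem (\cite{Deligne}, Theorem 2.11) every such Hodge tensor extends to an absolute Hodge cycle on $A$, and so comes equipped with an étale component $t_{\acute{e}t} \in V_{\acute{e}t}^{\otimes}$ (via Artin's comparison between Betti and $\ell$-adic étale cohomology) and a de Rham component $t_{dR} \in V_{dR/K}^{\otimes}$, where $K$ is large enough so that $G_{dR,x}$ descends. Under Artin's comparison $V_B \otimes \mathbb{Q}_{\ell} \cong V_{\acute{e}t}$, the subgroup $G_{x,\mathbb{Q}_{\ell}}$ is exactly the $\mathbb{Q}_{\ell}$-subgroup of $GL(V_{\acute{e}t})$ fixing all the tensors $t_{\acute{e}t}$, and under Grothendieck's Betti--de Rham comparison $V_B \otimes \mathbb{C} \cong V_{dR/\mathbb{C}}$ together with the descent to $K$, the group $G_{dR,x}$ is characterized as the $K$-subgroup of $GL(V_{dR/K})$ fixing all the tensors $t_{dR}$.

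Next I would invoke Blasius's theorem from \cite{Blas}, which asserts that every absolute Hodge cycle on an abelian variety is de Rham in the sense of Fontaine: the $\ell$-adic étale and de Rham components of any absolute Hodge cycle correspond to each other under Fontaine's $B_{dR}$-comparison $V_{\acute{e}t} \otimes_{\mathbb{Q}_{\ell}} B_{dR} \cong V_{dR/K} \otimes_K B_{dR}$ and its tensor extensions. Concretely, $t_{\acute{e}t} \otimes 1$ is sent to $t_{dR} \otimes 1$ for each such cycle. Consequently, the two families $\{t_{\acute{e}t}\}$ and $\{t_{dR}\}$ generate the \emph{same} $B_{dR}$-submodule of the common tensor algebra, and so cut out the same closed subgroup of $GL(V_{dR/K} \otimes_K B_{dR})$.

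Combining these steps, $G_{x,\mathbb{Q}_{\ell}} \otimes_{\mathbb{Q}_{\ell}} B_{dR}$ and $G_{dR,x} \otimes_K B_{dR}$ are both reductive subgroups cut out by the same set of tensor invariants, hence coincide. The only real content here is the Tannakian invocation of Blasius's theorem in tensorial form; the rest is formal, since a reductive algebraic group is recovered from its invariants in the tensor algebra of its defining representation. I expect the main obstacle to be verifying Blasius's compatibility statement at the level of arbitrary absolute Hodge tensors (not just classes in a single cohomology group), but this is precisely what the arguments of \cite{Blas} provide for abelian varieties, since all such tensors can be interpreted as absolute Hodge cycles on a suitable power of $A$.
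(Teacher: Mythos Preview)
Your proposal is correct and follows essentially the same route as the paper: characterize the reductive group $G_x$ as the fixator of a collection of Hodge tensors, use Deligne's theorem to view these as absolute Hodge with de Rham and \'etale components defining $G_{dR,x}$ and $G_{x,\mathbb{Q}_\ell}$ respectively, and then invoke Blasius's result that the $B_{dR}$-comparison sends $t_{\alpha,\acute{e}t}$ to $t_{\alpha,dR}$. The paper's proof is simply a more compressed version of what you wrote.
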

\begin{proof}
Being reductive, $G_x$ is the fixator of a collection of Hodge tensors $t_{\alpha} \in V_B^{\otimes}$ (cf. \cite{Deligne}, Proposition 3.1(c)).
The de Rham components $t_{\alpha, dR} \in V_{dR/\mathbb{C}}^{\otimes}$ descent to $K$ (again after allowing a finite extension of $K$) and define the subgroup $G_{dR,x} \subset GL(V_{dR/K})$. A result of Blasius (\cite{Blas}, Theorem 0.3) asserts that the $\ell$-adic comparison isomorphism $V_{\acute{e}t} \otimes_{\mathbb{Q}_{\ell}} B_{dR} \cong V_{dR/K} \otimes_K B_{dR}$ maps the étale components $t_{\alpha, \acute{e}t}$ to $t_{\alpha,dR}$, which proves the claim.
\end{proof}

\begin{lem}\label{splittingMT}
Let $\mu: \mathbb{G}_{m, K_v} \to GL(V_{dR/K_v})$ be any cocharacter splitting the Hodge filtration $F^{\bullet}$ on $V_{dR/K_v}$. Furthermore, let $G \subset GL(V_B)$ be a reductive $\mathbb{Q}$-subgroup such that the group $G \otimes_{\mathbb{Q}} \mathbb{C} \subset GL(V_{dR/\mathbb{C}})$ descents to a $K$-subgroup $G_{dR} \subset GL(V_{dR/K})$.
If $\mu$ factors through $G_{dR} \otimes_K K_v$, then the Mumford-Tate group $G_x$ of $V_B$ is contained in $G$.
\end{lem}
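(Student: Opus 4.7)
My plan is to use the Tannakian characterization of reductive subgroups: since $G_x$ and $G$ are both reductive $\mathbb{Q}$-subgroups of $GL(V_B)$, the containment $G_x \subset G$ is equivalent to the reverse inclusion of invariant tensors $(V_B^{\otimes})^G \subset (V_B^{\otimes})^{G_x}$ in every bidegree. The $G_x$-invariants in $V_B^{\otimes(m,n)}$ are precisely the Hodge classes of type $(0,0)$, and these vanish whenever $m \neq n$ since $G_x$ contains the scalar cocharacter, so it suffices to show that every $G$-invariant tensor is either zero or a Hodge class.

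For $t \in (V_B^{\otimes(m,m)})^G$ the argument runs as follows. The descent hypothesis combined with the Betti--de Rham comparison identifies
\begin{equation*}
(V_B^{\otimes(m,m)})^G \otimes_{\mathbb{Q}} \mathbb{C} = (V_{dR/K}^{\otimes(m,m)})^{G_{dR}} \otimes_K \mathbb{C}
\end{equation*}
as $\mathbb{C}$-subspaces of $V_{dR/\mathbb{C}}^{\otimes(m,m)}$. Any element of $(V_{dR/K}^{\otimes(m,m)})^{G_{dR}}$ is $\mu$-invariant after base change to $K_v$, hence of $\mu$-weight zero, and since $\mu$ splits $F^{\bullet}$ this weight-zero subspace is contained in $F^0(V_{dR/K_v}^{\otimes(m,m)})$. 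Faithfully flat descent along $K \to K_v$ then yields $(V_{dR/K}^{\otimes(m,m)})^{G_{dR}} \subset F^0(V_{dR/K}^{\otimes(m,m)})$. Complexifying, $t \otimes 1 \in F^0(V_{dR/\mathbb{C}}^{\otimes(m,m)})$; since $t$ is $\mathbb{Q}$-rational, complex conjugation on $V_B \otimes \mathbb{C}$ fixes $t \otimes 1$, putting it in $\bar{F}^0$ as well. Hence $t \otimes 1 \in F^0 \cap \bar{F}^0$, which in the weight-zero Hodge structure $V_B^{\otimes(m,m)} \otimes \mathbb{C}$ is exactly the subspace of type-$(0,0)$ classes, so $t$ is a Hodge class.

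The same reasoning applied in bidegree $(m,n)$ gives $(V_B^{\otimes(m,n)})^G \otimes_{\mathbb{Q}} \mathbb{C} \subset F^0 \cap \bar{F}^0$. When $m < n$ the Hodge structure has negative weight and this intersection is zero, forcing $(V_B^{\otimes(m,n)})^G = 0$. The case $m > n$ is the main obstacle: the intersection $F^0 \cap \bar{F}^0$ is nontrivial and the Hodge-theoretic argument alone does not yield vanishing. I plan to circumvent this using the $GL(V_B)$-equivariant duality $V_B^{\otimes(m,n)} \cong (V_B^{\otimes(n,m)})^{\vee}$, together with the general fact that for a reductive group $G$ one has $W^G = 0$ if and only if $(W^{\vee})^G = 0$; applying this to the already-established vanishing $(V_B^{\otimes(n,m)})^G = 0$ yields $(V_B^{\otimes(m,n)})^G = 0$.

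Once the inclusion $(V_B^{\otimes})^G \subset (V_B^{\otimes})^{G_x}$ is verified in every bidegree, Tannakian reconstruction for reductive subgroups of $GL(V_B)$ gives $G_x \subset G$. The two indispensable ingredients are the descent of $G_{\mathbb{C}}$ to a $K$-subgroup $G_{dR}$, which is what allows the analysis to be pulled into $V_{dR/K}^{\otimes}$ where $\mu$ can be applied, and the $\mathbb{Q}$-rationality of $t$, which lets complex conjugation upgrade membership in $F^0$ to membership in $F^0 \cap \bar{F}^0$.
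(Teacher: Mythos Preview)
Your argument is correct and follows the same route as the paper's: show that every $G$-invariant rational tensor has de Rham component lying in $F^0$ (because $\mu$ factors through $G_{dR}\otimes_K K_v$ and $\mu$-invariants sit in $F^0$), then use $\mathbb{Q}$-rationality to land in $F^0\cap\bar F^0$. The paper compresses this into the single sentence ``the $t_\alpha$ are $\mathbb{Q}$-tensors whose de Rham components lie in $F^0$, and thus are Hodge tensors,'' leaving the bidegree analysis implicit.

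Your explicit treatment of the case $m>n$ via the $GL(V_B)$-equivariant duality and the reductivity of $G$ is a genuine refinement: for positive-weight tensor spaces the intersection $F^0\cap\bar F^0$ is nonzero, so the Hodge-theoretic step by itself does not force $(V_B^{\otimes(m,n)})^G=0$, and the paper's phrasing glosses over this point. In the paper's only application of the lemma one takes $G=Z(G_x)$, which contains the scalar torus, so $(V_B^{\otimes(m,n)})^G=0$ for $m\neq n$ automatically and the issue never arises; your version, however, justifies the lemma in the generality in which it is stated.
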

\begin{proof}
By (\cite{Deligne}, Proposition 3.1(c)), the reductive group $G$ is characterized by the tensors $t_{\alpha} \in V_B^{\otimes}$ that it fixes. The corresponding de Rham tensors $t_{\alpha,dR} \in V_{dR/\mathbb{C}}^{\otimes}$ are $\mathbb{C}$-linear combinations of tensors in $V_{dR/K}^{\otimes}$ fixed by the group $G_{dR}$. Since $\mu$ factors through $G_{dR} \otimes_K K_v$, the $t_{\alpha}$ are $\mathbb{Q}$-tensors whose de Rham components lie in $F^{0} \subset V_{dR/\mathbb{C}}^{\otimes}$, and thus are Hodge tensors.
We conclude that the $t_{\alpha}$ are fixed by the Mumford-Tate group of $V_B$, which is thus contained in $G$.
\end{proof}

\begin{thm}\label{ellCDKMT}
Suppose Conjecture \ref{ellCDK} holds true for $S= \mathcal{A}_g$. Then every $\ell$-Galois special point of $\mathcal{A}_g$ is a CM point. In light of Theorem \ref{mtequivalent}, this shows that Conjecture \ref{ellCDK} for this case implies the Mumford-Tate conjecture for principally polarized abelian varieties.
\end{thm}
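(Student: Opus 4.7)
The reduction is immediate from Theorem~\ref{mtequivalent}(\ref{ab})$\Rightarrow$(\ref{mt}): it suffices to prove that every $\ell$-Galois special point $x \in \mathcal{A}_g(\bar{\mathbb{Q}})$ (such points are indeed defined over $\bar{\mathbb{Q}}$ by Corollary~\ref{fodgalsp}) is a CM point, which by the Faltings characterization recalled just after Theorem~\ref{mtequivalent} amounts to showing that $G_{\ell,x}$ is a $\mathbb{Q}_\ell$-torus. Pick a number field $K$ of definition of $x$ and a place $v$ of $K$ above $\ell$, so that all the constructions of the previous subsection apply to $Z = \{x\}$.

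The geometric input specific to $\mathcal{A}_g$ is that $\mathcal{A}_g$ is itself a connected component of a Shimura variety of Hodge type, and $\Phi \colon \mathcal{A}_g \to \Gamma\backslash \mathcal{D}$ is (up to level structure) an open immersion; in particular $\dim \mathcal{A}_g = \dim \mathcal{F} = g(g+1)/2$, so the formal period map $\hat{\Phi}_x \colon \widehat{\mathcal{A}_{g,v,x}} \to \widehat{\mathcal{F}_y}$ is an isomorphism of formal schemes. Applying the hypothesized Conjecture~\ref{ellCDK} to the $\ell$-Galois special subvariety $\{x\}$, the formal point $\widehat{\{x\}_{v,x}}$ must be an irreducible component of $\hat{\Phi}_x^{-1}(\widehat{(\mathcal{F}_{\ell,\{x\}})_y})$. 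Since the identity component $G_{\ell-dR,\{x\}}$ is connected, the flag variety $\mathcal{F}_{\ell,\{x\}} = G_{\ell-dR,\{x\}}/P_{\mu_x}$ is smooth and connected, so via the isomorphism $\hat{\Phi}_x$ the preimage is connected of dimension $\dim \mathcal{F}_{\ell,\{x\}}$. The only way $\widehat{\{x\}_{v,x}}$ can appear as an irreducible component is then $\dim \mathcal{F}_{\ell,\{x\}} = 0$, i.e.\ $P_{\mu_x} = G_{\ell-dR,\{x\}}$, which is the statement that the Hodge cocharacter $\mu_x$ is central in $G_{\ell-dR,\{x\}}$.

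The remaining and decisive step is to upgrade this centrality into the statement that $G_{\ell,x}$ is a torus. The Fontaine comparison recorded after Definition~\ref{definitiongroupGelldR} identifies $G_{\ell,x} \otimes_{\mathbb{Q}_\ell} B_{dR}$ with $G_{\ell-dR,\{x\}} \otimes_{K_v} B_{dR}$, so $\mu_x$ becomes a central cocharacter of $G_{\ell,x} \otimes B_{dR}$. To pass from centrality of $\mu_x$ to commutativity of $G_{\ell,x}$ itself I would invoke the $\ell$-adic Hodge-theoretic principle (in the spirit of results of Sen, Wintenberger and Bogomolov on abelian varieties) that $\mathrm{Lie}(G_{\ell,x}) \otimes \bar{K}_v$ is generated by the $\mathrm{Gal}(\bar{K}_v/K_v)$-conjugates of the Hodge--Tate cocharacter; these conjugates are obtained by conjugation inside the reductive group $G_{\ell,x}$, so if $\mu_x$ is central they all coincide with $\mu_x$. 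Hence $\mathrm{Lie}(G_{\ell,x})$ is abelian, and the connected reductive group $G_{\ell,x}$ is a torus. Faltings' theorem then identifies $x$ as a CM point, and Theorem~\ref{mtequivalent} gives the Mumford--Tate conjecture. The main obstacle in this outline is precisely this last upgrade: the first two paragraphs are formal consequences of the Shimura variety structure and of the conjecture, whereas converting the local splitting data at $\mu_x$ into a global structural statement about the image of Galois is the genuinely hard input and is the place where the abelian variety hypothesis is unavoidable.
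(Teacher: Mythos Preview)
Your reduction to $\dim \mathcal{F}_{\ell,\{x\}}=0$, and hence to the centrality of $\mu_x$ in $G_{\ell-dR,\{x\}}$, is correct and coincides verbatim with the paper's argument. The divergence is entirely in the last step, and there your outline has a real gap.

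The claim that $\mathrm{Lie}(G_{\ell,x})\otimes\bar K_v$ is generated by the $\mathrm{Gal}(\bar K_v/K_v)$-conjugates of the Hodge--Tate cocharacter is not a known theorem. Sen's theory only places the Sen operator in the Lie algebra of the Zariski closure of the \emph{local} image of inertia, which is in general much smaller than the global group $G_{\ell,x}$; Bogomolov's theorem gives homotheties, not generation. Your sentence ``these conjugates are obtained by conjugation inside the reductive group $G_{\ell,x}$'' is also not right: the Galois action on a cocharacter defined over $\bar K_v$ is through the coefficients, not by inner automorphism. (It is true that Galois conjugates of a central cocharacter remain central, since the center is defined over $\mathbb{Q}_\ell$, so your conclusion would survive if the generation claim held --- but the generation claim is exactly what is missing.) You correctly flag this as the obstacle; the point is that the paper does not attempt to overcome it.

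The paper instead avoids showing that $G_{\ell,x}$ is a torus and proves directly that the \emph{Mumford--Tate group} $G_x$ is a torus. The key external input is the theorem of Vasiu and Ullmo--Yafaev that the Mumford--Tate conjecture holds for connected centers: $Z(G_x)^\circ\otimes\mathbb{Q}_\ell=Z(G_{\ell,x})^\circ$. Combined with Blasius's compatibility (Lemma~\ref{groupcomp}), this yields $Z(G_{dR,x})^\circ\otimes_K K_v=Z(G_{\ell-dR,x})^\circ$, so the central cocharacter $\mu_x$ actually factors through $Z(G_{dR,x})\otimes_K K_v$. Then Lemma~\ref{splittingMT} --- a short Hodge-theoretic argument showing that any reductive $\mathbb{Q}$-subgroup whose de Rham form contains a cocharacter splitting the Hodge filtration must contain $G_x$ --- applied with $G=Z(G_x)$ gives $G_x\subset Z(G_x)$, hence $G_x$ is a torus and $x$ is CM. So the ``genuinely hard input'' you were looking for is the center comparison of Vasiu/Ullmo--Yafaev, and the passage goes through the Hodge side rather than through the $\ell$-adic side.
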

\begin{proof}
Let $x \in \mathcal{A}_g(K)$ be an $\ell$-Galois special point,
$G_{\ell-dR, x}$ the group defined in Definition \ref{definitiongroupGelldR}, and let $\mu: \mathbb{G}_{m, K_{v}} \to G_{\ell-dR, x}$ be a cocharacter splitting the Hodge filtration on $V_{dR/K_v}$.
In the case of $\mathcal{A}_g$, the local period map is an isomorphism $$\hat{\Phi}_x: \widehat{\mathcal{A}_{g,x}} \overset{\sim}{\rightarrow} \widehat{\mathcal{F}_{y}}$$
which identifies the completion of $\mathcal{A}_g$ at $x$ with the completion of a flag variety attached to the group $GSp_{2g}$ at a point $y$.
Conjecture \ref{ellCDK} predicts that $$\{x\} = \hat{\Phi}_x^{-1}(\widehat{(\mathcal{F}_{\ell,x})_{y}}).$$
As $\hat{\Phi}_x$ is an isomorphism it follows that $\mathrm{dim \,}\mathcal{F}_{\ell,x} = 0$.
This shows that $P_{\mu} = G_{\ell-dR,x}$ and thus the cocharacter $\mu$ factors through the center $Z(G_{\ell-dR,x})$ of $G_{\ell-dR,x}$.
It is known (\cite{Vasiu}, Theorem 1.3.1 or \cite{UllmoYafaev}, Corollary 2.11) that $$Z(G_x)^{\circ} \otimes \mathbb{Q}_{\ell} = Z(G_{\ell,x})^{\circ},$$ i.e. the Mumford-Tate conjecture holds for the identity component of the center.
We thus have an identification
$$Z(G_{dR,x})^{\circ} \otimes_{K} B_{dR} = Z(G_x)^{\circ} \otimes_{\mathbb{Q}} B_{dR} = Z(G_{\ell,x})^{\circ}\otimes_{\mathbb{Q}_{\ell}} B_{dR} = Z(G_{\ell-dR, x})^{\circ} \otimes_{K_v} B_{dR},$$
where we use Lemma \ref{groupcomp} for the first equality.
Hence $Z(G_{dR,x})^{\circ} \otimes_K K_v = Z(G_{\ell-dR,x})^{\circ}$.
In particular, the cocharacter $\mu: \mathbb{G}_{m,K_v} \to GL(V_{dR/K_v})$ factors through $Z(G_{dR,x}) \otimes_K K_v$.
Applying Lemma \ref{splittingMT} to the reductive $\mathbb{Q}$-subgroup $Z(G_x) \subset GL(V_B)$
we conclude that $G_x = Z(G_x)$ is a torus, and thus $x \in \mathcal{A}_g(K)$ is a CM point.
\end{proof}

\addcontentsline{toc}{section}{References}

\bibliographystyle{alpha}
\bibliography{Galsp_arXiv_new}

\end{document}